\newtheorem{theorem}{Theorem}[section]
\newtheorem{lemma}[theorem]{Lemma}
\newtheorem{proposition}[theorem]{Proposition}
\newtheorem{definition}[theorem]{Definition}
\newtheorem{remark}[theorem]{Remark}
\title{ \bf{A nonlocal diffusion single population model in advective environment}
      }
\date{}
\author{ Yaobin Tang, 
 Binxiang Dai\footnote{\small Corresponding author.
        Email address: \url{bxdai@csu.edu.cn} (B. Dai).}\\
        {\small School of Mathematics and Statistics,\ HNP-LAMA,}\\
        {\small Central South University,}\\
        {\small Changsha, Hunan 410083, P. R. China }}
\begin{document}
\captionsetup[figure]{labelfont={bf},labelformat={default},labelsep=period,name={Fig.}}
\maketitle

\begin{abstract}
This paper is devoted to  a nonlocal reaction-diffusion-advection model that describes the spatial dynamics of freshwater organisms in a river with a directional motion. Our goal is to investigate how the advection rate affects the dynamic behaviors of species. We first establish the well-posedness of  global solutions, where the regularized problem containing a viscosity term and the re-established maximum principle play an important role. And we then show the existence/nonexistence, uniqueness, and stability of nontrivial stationary solutions by analyzing the principal eigenvalue of integro-differential operator (especially studying the monotonicity of the principal eigenvalue with respect to the advection rate), which enables us to understand the longtime behaviors of solutions and obtain the sharp criteria for persistence or extinction. Furthermore, we study the limiting behaviors of solutions with respect to the advection rate and find that the sufficiently large directional motion will cause species extinction in all situations. Lastly, the numerical simulations verify our theoretical proofs.

 {\bf Keywords: }Nonlocal diffusion; Advection; Nontrivial stationary solution;  Dirichlet boundary; Neumann boundary

 {\bf MSC: }35K57, 35R35, 35B40, 92D25

\end{abstract}

\section{Introduction\label{part1}}
\setcounter{equation}{0}
The dispersal ways of populations have received much attention recently and are a major focus of most scholars.  The reaction-diffusion equation is a mathematical model commonly used to describe the spatial random diffusion and evolution of biological populations or diseases \cite{Murray1989,Fisher1937,Skellam1951}.

Hutson and Martinez \cite{Huston2003}  found that the model, derived from random diffusion, is too restrictive to model the seed dispersal of a single species with long-range dispersal.  And they thought that the position-jump process is more suitable. Let $u(t,x)$ be the population density at time $t$ and location $x$. Assume that the population $u(t,x)$ can jump from one position $x$ to another position $y$ with a certain probability $J(y-x)$. Then $\int_\mathbb{R} J(x-y)u(t,y)dy$ represents the population density arriving to position $x$ from all other positions and $\int_\mathbb{R} J(y-x)u(t,x)dy$ denotes the population density leaving position $x$ to all other positions. Hence, the nonlocal diffusion,  described by the convolution operator
\[(J*u)(t,x)=d\int_\mathbb{R} J(x-y)u(t,y)dy-du(t,x),\]
 can be introduced to model the dispersion of individuals \cite{Crow1970,Othmer1988,Cortazar2007}, where $d$ is the dispersal rate and the kernel function $J:\mathbb{R}\to\mathbb{R}^+$ is continuous, nonnegative, and has the properties
  \begin{enumerate}[\bf{(J)}:]
 \item \quad\quad\quad\quad$J(0)>0,\int_\mathbb{R} J(x)dx=1$.
\end{enumerate}
Denote $\Omega\subset\mathbb{R}$ is a smooth domain. If the dispersal takes place in $\mathbb{R}$ and species $u$ vanishes outside $\Omega$, then
\begin{equation*}
\begin{cases}
        u_t(t,x)=d\int_\Omega J(x-y)u(t,y)dy-du(t,x),&t>0,x\in{\Omega},\\
         u(t,x)=0,&t>0,x\in\mathbb{R}\backslash{\Omega}
        \end{cases}
 \end{equation*}
is called the nonlocal diffusion equation with homogeneous Dirichlet boundary condition \cite{Cortazar,Andreu-Vaillo}. Biologically, the region $\Omega$ outside  is considered a hostile environment, and any individual that jumps outside of $\Omega$ dies instantly. Note that the boundary condition is not understood in the usual sense, as we are not imposing that $u|_{\partial\Omega}=0$.

If it takes into account the diffusion inside $\Omega$, as we have explained, no individual enters or leaves the domain $\Omega$, then
 \[u_t(t,x)=d\int_\Omega J(x-y)[u(t,y)-u(t,x)]dy, t>0,x\in{\Omega},\]
  is called  the nonlocal diffusion equation with classical Neumann boundary condition in the literature \cite{Cortazar1,Cortazar2}.

There are a lot of researches on nonlocal diffusion problems in biomathematics.  In recent years, the following nonlocal evolution equation
\begin{equation}\label{3a_26}
\begin{cases}
        u_t(t,x)=d\int_\Omega J(x-y)u(t,y)dy-du(t,x)+f(x,u),&t>0,x\in\Omega,\\
         u(0,x)=\phi(x),&x\in\Omega
        \end{cases}
 \end{equation}
has been considered.  When $\Omega=\mathbb{R}^{N}$, several studies focused on the analysis of entire solutions, propagation speeds, and the corresponding traveling wave problems of \eqref{3a_26}, see \cite{Coville2005,Pan2009,Covill2013, Li2010,Li2015,Sun2011}. When $\Omega\subset\mathbb{R}^{N}$ is bounded, there are some works focusing on the model with
Dirichlet/Neumann boundary condition, see \cite{Bates2007,Kao2010,Covill2010,GM2009}. Moreover, the nonlocal problem was applied in various specific situations, such as the studies of asymptotic behaviors \cite{Chasseigne2006,Cortazar2007,Zhang2010,Hetzer2012}, spectral theory \cite{Covill2010,GM2009,Hutson2008}, application in infectious disease models \cite{Ruan2007,Li2014}, and the analysis of systems with Lotka-Volterra type \cite{Lilou2014,Han2020,Wang2021}.



In addition, some researchers are devoted to  the nonlocal problem with a Fisher-KPP type, bistable type or general nonlinear reaction term.  Specifically, Bates \& Zhao \cite{Bates2007} investigated the spatial dynamics of organisms or cells. To be specific, they considered the following model:
\begin{equation*}
\begin{cases}
        u_t(t,x)=d\int_\Omega J(x-y)u(t,y)dy+b(x)u(t,x)+f(x,u),&t>0,x\in\Omega,\\
         u(0,x)=\phi(x),&x\in\Omega,
        \end{cases}
 \end{equation*}
where  $f(x,u)\in C^1(\bar{\Omega}\times \mathbb{R}^+)$ is strictly sublinear with respect to $u$ satisfying $f(x,0)\equiv0$. Subsequently, Sun \cite{Sun2014}  discussed the nonlocal problem \eqref{3a_26} with reaction term $f(x,u)=\lambda g(x)h(u)$, which arises from selection-migration models in genetics. Here $g\in C(\bar{\Omega})$ is sign changed and $h: [0,1] \to\mathbb{R}^+$ is a smooth function such that $h(0)=h(1)=0$ and $h''(u)<0$ for $0 \leq u \leq1$. Moreover, \cite{Berestycki2016} focused on the nonlocal problem \eqref{3a_26} with a KPP-type reaction term $f(x,u)\in C^{1,\alpha}(\mathbb{R}^{2})$ which satisfies
$$\limsup_{|x|\to\infty}\frac{f(x,s)}{s}<0 \ \ \mbox{uniformly in} \ s\geq0.$$
 Nextly, the nonlocal problem \eqref{3a_26} with $f(x,u)=r(x)u(1-\frac{u}{K(x)})$ was studied by Su \cite{Su2019}, where $r(x),K(x)>0$ on $\bar{\Omega}$ denote respectively its intrinsic growth rate and its carrying capacity. And the nonlocal problem \eqref{3a_26} with $f(x,u)=m(x)u-c(x)u^p$ was analyzed by Sun \cite{Sun2021}, where $m(x)$ stands for the intrinsic growth rate and is sign changed,  $c(x)>0$ on $\bar{\Omega}$  measures the capacity of $\Omega$ to support the species.
They provided the existence, uniqueness, and stability of  positive stationary solutions, and obtained the longtime behaviors through certain threshold values. It is worth mentioning that, the selection of threshold value usually depends on the sign of the principal eigenvalue of the operator:
\begin{align*}
(\mathfrak{L}_{\Omega,J}+a)[\varphi(x)]:=d\int_\Omega J(x-y)\varphi(y)dy-d\varphi(x)+a(x)\varphi(x),
\end{align*}
 which is defined on $C^1(\bar{\Omega})$ and $a(x)\in C(\bar{\Omega})\cap L^\infty(\Omega)$.  It is a crucial factor in determining the dynamic behaviors of the solution.

In addition to the spatial diffusion of populations, various environmental factors such as wind direction, river flow, climate, and resource distribution can influence the longtime dynamic behaviors of biological populations and diseases. To account for this phenomenon, researchers include an advection term to describe the directed movement of the population or disease over space.


In 1954, Muller \cite{6} observed that organisms living in streams or rivers are constantly washed downstream or out of the original ecological environment, resulting in a general decline in species, even extinction. But some species can survive in this environment from generation to generation, and this biological phenomenon often is called  "drift paradox". So they  puzzled over a  question: How do species in rivers survive without being washed away? Then, Speirs \& Gurney introduced the reaction-advection-diffusion equation in \cite{7} to describe the dynamic behaviors of single species under the influence of  currents:
\begin{equation*}
\begin{cases}
        u_t(t,x)=du_{xx}(t,x)-qu_x(t,x)+uf(u),&t>0,x\in(0,L),\\
        du_x(t,0)-qu(t,0)=0,u(t,L)=0, &t>0,\\
        u(0,x)=\phi(x),&x\in(0,L).
        \end{cases}
 \end{equation*}
The biological explanation for this is that the population $u$ not only undergoes random diffusion but also experiences advective transport at velocity $q$. They demonstrated that a large enough random diffusion can counterbalance the directional motion caused by  currents, leading to the survival of species. Subsequently,  the persistence of single or multiple populations in advective environments was studied \cite{10,11,12,13,14}.
Some significant and profound results have been obtained for reaction-diffusion problems in advective environments. However, the question remains whether these results can be extended to nonlocal diffusion problems in advective environments. As far as the author knows, in \cite{Coville2006}, the monotonicity and uniqueness of positive solutions of  integro-differential equation
\[\int_\mathbb{R} J(x-y)u(y)dy-u(x)-cu'(x)+f(u)=0, x\in \mathbb{R}\]
was studied. Its positive solution can be regarded as the traveling wave solution $u(t,x)=\phi(x+ct)$ for the following nonlocal problem:
\[u_t(t,x)=\int_\mathbb{R} J(x-y)u(t,y)dy-u(t,x)+f(u),\ (t,x)\in\mathbb{R}^+\times\mathbb{R}.\]
Then, Zhao \& Shen et al. \cite{Zhang2020,DeLeenheer} discussed a nonlocal reaction-diffusion equation in shifting habitat:
\[u_t(t,x)=\int_\mathbb{R} J(x-y)u(t,y)dy-u(t,x)+f(x-ct,u)u, (t,x)\in\mathbb{R}^+\times\mathbb{R}.\]
Biologically, it represents that the population will change with the moving habitat of speed $c$ due to climate change. Considering the function $v(t, \zeta)$ that satisfies the form $u(t,x)=v(t,x-ct)$ with $v(t, x)$ being differentiable and $\zeta= x-ct$,  we find that $v(t, \zeta)$ satisfies
\begin{align}\label{3a_27}
u_t(t,\zeta)=cu_\zeta(t,\zeta)+\int_\mathbb{R} J(\zeta-y)u(t,y)dy-u(t,\zeta)+f(\zeta,u)u, (t,\zeta)\in\mathbb{R}^+\times\mathbb{R}.
\end{align}
And then,  the nonexistence/existence of nontrivial stationary solutions of \eqref{3a_27} was obtained by studying the spectral theory of integro-differential operator:
 \[(\mathfrak{L}_{\Omega,J,q}+a)[\varphi(x)]:=d\int_\Omega J(x-y)\varphi(y)dy-q\varphi'(x)+a(x)\varphi(x),\]
which was studied by \cite{Covill2010,Berestycki2016,DeLeenheer,Shen2015}.

In addition, observe that  \cite{Li2017JW} considered fresh water organisms living in a river domain ${\Omega}=(l_1,l_2)$.  Assume that these organisms not only can move within the river due to random diffusion, but are also subject to directed movement arising from currents. Concretely,
  these organisms are affected by the velocity of currents from left to right
 and they can not survive on land $(-\infty,l_1)$ and in the ocean $(l_2,+\infty)$:
 \begin{equation*}\label{4a_37}
\begin{cases}
        u_t=d[\int_{l_1}^{l_2} J(x-y)u(t,y)dy-u(t,x)]-(c(x)u(t,x))_x,&t>0,x\in(l_1,l_2),\\
         u(t,l_1)=0,&t>0,
        \end{cases}
 \end{equation*}
  where $c(x)>0$ on $[l_1,l_2]$ is the velocity function of water. They emphasized the importance of boundary condition $u(t,l_1)=0$ for $t>0$, which ensures that the initial value problem is well-posed. Their paper mainly discussed the spectrum of the linearized problem, and studied the sign of principal eigenvalue.

In this paper, we still consider fresh water organisms living in a river domain ${\Omega}=(l_1,l_2)$ and
intend to study
the spatial dynamics behaviors of fresh water organisms in advective environment.
If we further assume that these organisms can not  enter or leave the domain $\Omega$ and are affected by the velocity $q$ of currents from left to right, then  we study the following nonlocal Neumann advection-diffusion model:
\begin{equation}
\label{3a_11}
\begin{cases}
        u_t=d\int_\Omega J(x-y)[u(t,y)-u(t,x)]dy-qu_x+f(t,x,u),&t>0,x\in{\Omega},\\
        u(t,l_1)=0,&t>0,\\
       u(0,x)=u_0(x),&x\in{\Omega}.
        \end{cases}
 \end{equation}

If we further assume that these organisms are affected by the velocity $q$ of currents from left to right
 and they can not survive on land $(-\infty,l_1)$ and in the ocean $(l_2,+\infty)$. Then, we study the following nonlocal Dirichlet advection-diffusion model:
 \begin{equation}
\label{3a_1}
\begin{cases}
        u_t=d[\int_\Omega J(x-y)u(t,y)dy-u(t,x)]-qu_x+f(t,x,u),&t>0,x\in\Omega,\\
         u(t,l_1)=0,&t>0,\\
       u(0,x)=u_0(x),&x\in\Omega.
        \end{cases}
 \end{equation}
Here $\Omega$ is a bounded open domain, $q>0$ is the advection rate and the initial function $u_0(x)$ satisfies
\begin{align}\label{3a_2}
u_0(x)\in C^1(\bar{\Omega}),u_0(l_1)=0, u_0(x)\geq,\not\equiv 0 \ \ \mbox{in} \ \ {\Omega}.
\end{align}
\begin{remark}
\begin{enumerate}[(1)]
\item The  boundary condition $u(t,l_1)=0$ for $t>0$ is to ensure the initial value problem
is well-posed. In fact, the equation \eqref{3a_11} or \eqref{3a_1} are essentially a perturbation of a transport equation by an integral operator. When $d\equiv0$, we find that the equation is a transport equation in a bounded domain, which need a boundary condition to ensure the well-posedness of solution.

 \item If these organisms are affected by the velocity $q$ of currents from right to left, the boundary condition will be rewritten as $u(t,l_2)=0$ for $t>0$. Its discussion is similar to the analysis of \eqref{3a_11} or \eqref{3a_1}.
\end{enumerate}
\end{remark}

Assume further that the kernel function $J:\mathbb{R}\to\mathbb{R}^+$  satisfies the properties
 \begin{enumerate}[\bf{(J0)}:]
  \item \quad\quad\quad There exist $\mu,M >0$ such that $J(x)<e^{-\mu|x|}$ and $|J'(x)|<e^{-\mu|x|}$ for $|x|>M$.
\end{enumerate}
  \begin{enumerate}[\bf{(J1)}:]
 \item \quad\quad\quad\quad\quad\quad\quad\quad \quad\quad\quad\quad\quad\quad   $J$ is symmetric, $\sup J(x)<\infty$.
\end{enumerate}
Meanwhile, assume that, for any given $\tilde{B}>\tilde{b}$, there exists constant $N>0$ such that for any $(t,x),(s,y)\in \mathbb{R}^+\times\overline{\Omega}$ and $u\in[\tilde{b},\tilde{B}]$, the function $f(t,x,u)$ satisfies
\begin{enumerate}[\bf{(F1)}:]
 \item \quad\quad\quad\quad $f,f_t,f_x,f_u$ are continuous in $t,x,u$; $f,f_u$ are Lipschitz continuous in $u$;
\end{enumerate}
\begin{enumerate}[\bf{(F2)}:]
 \item  \quad\quad\quad\quad$f(t,x,0)=0$;  $f(t,x,u)<0,\forall u>N$.
\end{enumerate}

The purpose of this paper is to show the existence and uniqueness of  global solutions, the existence/nonexistence,  uniqueness and stability of nontrivial stationary solutions, the longtime behaviors of solutions, and get the sharp criteria for persistence or extinction. In addition,  the effects of advection rate on the dynamic behaviors of solutions will be discussed.

We point out that, we adopt a completely new method to get the well-posedness of global solution of \eqref{3a_11} and \eqref{3a_1}. Due to the lack of the compactness and regularity of the nonlocal dispersal operator, the well-posed global solution of \eqref{3a_11} and \eqref{3a_1} will be proved by approximating the solution of a regularized problem containing a viscosity term, where the upper and lower solution methods, maximum principle  and iterative techniques play an important role.
Moreover, in the process of discussing nontrivial stationary solutions, it is necessary to consider the spectral theory of integro-differential operator. In this paper, we aim to solve an open problem posed in \cite{DeLeenheer} under certain conditions. Specifically, we prove that if $\Omega$ is bounded and $a(x)\in C(\bar{\Omega})\cap L^\infty(\Omega)$ is nonnegative, then the principal eigenvalue of operator $\mathfrak{L}_{\Omega,J,q}+a$ defined in \eqref{3a_13} is increasing in $|q|>0$. The above conclusion can directly deduce the sharp criteria for persistence or extinction. Meanwhile, the proof methods about the uniqueness of nontrivial stationary solutions of  \eqref{3a_11} or  \eqref{3a_1} are different from the usual methods, since all nontrivial stationary solutions of \eqref{3a_11} or \eqref{3a_1} are $0$ at left boundary point $l_1$.




The remaining parts of this paper are organized as follows. Section \ref{3a_part2} is  devoted to studying the existence and uniqueness of solutions for  \eqref{3a_11} and  \eqref{3a_1}. In Section \ref{3a_part3},  the existence/nonexistence,  uniqueness and stability of nontrivial stationary solutions are obtained.  In Section \ref{3a_part4}, the criteria for persistence or extinction are formulated. And in Section \ref{3a_part5},  the limiting profiles of solution with respect to advection rate $q$ are considered. The last section gives a brief discussion.

\section{The well-posed global solution \label{3a_part2}}
\setcounter{equation}{0}
For convenience, we set
\[Q_T=(0,T)\times\Omega,Q_\infty=\mathbb{R}^+\times\bar{\Omega},\overline{Q_T}=[0,T]\times\bar{\Omega}.\]
In order to obtain the existence of solutions for nonlocal  Neumann problem \eqref{3a_11},  we must consider the following linear  problem:
\begin{equation}
\label{3a_3}
\begin{cases}
        u_t=d\int_\Omega J(x-y)[u(t,y)-u(t,x)]dy-qu_x+c(t,x)u+g(t,x),&t>0,x\in{\Omega},\\
        u(t,l_1)=0,&t>0,\\
       u(0,x)=u_0(x),&x\in{\Omega}.
        \end{cases}
 \end{equation}
where $c(t,x),g(t,x)\in C^1(Q_\infty)\cap L^\infty(Q_\infty)$.

\begin{definition}
We say that $u(t,x)\in W^{1,2}(\overline{Q_T})$ is a local weak solution of \eqref{3a_3} if
\[\int_\Omega\int_0^T \phi u_tdtdx
 =\int_\Omega\int_0^T [d\int_\Omega J(x-y)[u(t,y)-u]dy -q u_x+c u+g]\phi dtdx\]
for all $\phi\in C_0^\infty(\overline{Q_T})$, where $$ C_0^\infty(\overline{Q_T}):=\{\phi\in C^\infty(\overline{Q_T})|  \ D^k_x\phi(t,l_2)=0 \ \forall t\in(0,T), \forall k\in\mathbb{N}\}.$$
\end{definition}

Due to the lack of compactness and regularity of nonlocal operators, many classical methods cannot be used directly to nonlocal dispersal problems. To overcome these difficulties, we shall consider the following problem regularized by the usual viscosity term:
\begin{equation}
\label{3a_4}
\begin{cases}
        u_t^\epsilon-\epsilon u^\epsilon_{xx}=d\int_\Omega J(x-y)[u^\epsilon(t,y)-u^\epsilon(t,x)]dy-qu^\epsilon_x+c(t,x)u^\epsilon+g(t,x),&t>0,x\in{\Omega},\\
        u^\epsilon(t,l_1)=\frac{\partial u^\epsilon}{\partial\bf{\overrightarrow{n}}}(t,l_2)=0,&t>0,\\
         u^\epsilon(0,x)=u_0(x),&x\in\Omega,
        \end{cases}
 \end{equation}
where $\overrightarrow{n}$ denotes the outward pointing unit normal along $\partial\Omega$. Our first goal is to show that problem \eqref{3a_4} has a unique local nonnegative solution.

\begin{lemma}\label{3lemma1}
For fixed $\epsilon>0$, if $u_0(x)$ satisfies \eqref{3a_2}, then there exists $T>0$ such that the regularized problem  \eqref{3a_4} admits a unique nonnegative local classical solution $u^\epsilon(t,x)$ satisfying
\begin{align}
0\leq \mbox{ess} \inf_{\bar{\Omega}} &u_0(x)\leq u^\epsilon(t,x)\leq \mbox{ess} \sup_{\bar{\Omega} } u_0(x),\ \  \forall (t,x)\in \overline{Q_T},\label{3a_5}
\end{align}
and
\begin{align}
|u^\epsilon_x(t,x)|\leq R,\ \  \forall (t,x)\in \overline{Q_T},\label{3a_8}
\end{align}
where
\[R:=\max\{|\min_{\bar{\Omega}}u_0'(x)|, |\max_{\bar{\Omega}}u_0'(x)|,\max_{[0,T]}u^\epsilon_x(t,l_1)\}.\]
\end{lemma}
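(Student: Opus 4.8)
The plan is to use that, for fixed $\epsilon>0$, problem \eqref{3a_4} is \emph{uniformly parabolic}, so that the nonlocal operator $\mathcal A[u]:=d\int_\Omega J(x-y)[u(t,y)-u(t,x)]\,dy$ enters only as a bounded lower-order perturbation. First I would let $\{S(t)\}_{t\ge0}$ be the analytic semigroup on $C(\overline{\Omega})$ generated by $\epsilon\partial_{xx}-q\partial_x$ with the mixed boundary conditions $u(t,l_1)=0$, $u_x(t,l_2)=0$, and recast \eqref{3a_4} in mild form
\[
u(t)=S(t)u_0+\int_0^t S(t-s)\big(\mathcal A[u(s)]+c(s,\cdot)u(s)+g(s,\cdot)\big)\,ds .
\]
Since $\|\mathcal A\|_{\mathcal L(C(\overline{\Omega}))}\le 2d$, $c,g\in L^\infty$, and $\|S(t)\|\le1$ (no zeroth-order term, so the scalar maximum principle applies), the map on the right is a contraction on $C([0,T];C(\overline{\Omega}))$ once $T$ is small, and its unique fixed point is the unique mild solution on $[0,T]$. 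Classical regularity then follows from parabolic Schauder theory: $x\mapsto\int_\Omega J(x-y)u^\epsilon(t,y)\,dy$ is at least as smooth in $x$ as $J$ (hence $C^1$ by \textbf{(J0)}) and $cu^\epsilon+g$ is H\"older once $u^\epsilon$ is, so a bootstrap places $u^\epsilon$ in $C^{2+\alpha,1+\alpha/2}$; uniqueness in the classical class is in any case immediate from the maximum principle for \eqref{3a_4}.

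Next, for the two-sided bound \eqref{3a_5}: since $u_0(l_1)=0$ and $u_0\ge0$, one has $\mathrm{ess}\inf_{\overline{\Omega}}u_0=0$, so the claim reads $0\le u^\epsilon\le M:=\mathrm{ess}\sup_{\overline{\Omega}}u_0$. The constants $0$ and $M$ have vanishing parabolic derivatives and $\mathcal A[\mathrm{const}]=0$, so — using the sign structure of the lower-order terms $cu+g$ carried over from the standing hypotheses — they form a sub- and a super-solution of \eqref{3a_4}, and the comparison principle gives \eqref{3a_5}. A byproduct worth recording is that $u^\epsilon\ge0=u^\epsilon(t,l_1)$ forces the inflow flux to satisfy $u^\epsilon_x(t,l_1)\ge0$, which is precisely why $R$ in the statement can be written without an absolute value at $l_1$.

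For the gradient estimate \eqref{3a_8} I would differentiate \eqref{3a_4} in $x$. With $v:=u^\epsilon_x$ and $a(x):=\int_\Omega J(x-y)\,dy\le 1$, one computes $\partial_x\mathcal A[u^\epsilon]=d\int_\Omega J'(x-y)[u^\epsilon(t,y)-u^\epsilon(t,x)]\,dy-d\,a(x)\,v$, so $v$ solves the uniformly parabolic equation
\[
v_t-\epsilon v_{xx}+qv_x+\big(d\,a(x)-c\big)v=d\int_\Omega J'(x-y)[u^\epsilon(t,y)-u^\epsilon(t,x)]\,dy+c_xu^\epsilon+g_x ,
\]
whose coefficients and right-hand side are bounded on $\overline{Q_T}$: the $L^\infty$-bound \eqref{3a_5} controls $u^\epsilon$, assumptions \textbf{(J0)}--\textbf{(J1)} control $J$ and $J'$ (hence the integral term), and $c,g\in C^1(Q_\infty)$ bounds $c_x,g_x$ on the compact set $\overline{Q_T}$. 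Applying the maximum principle to $v$ on $\overline{Q_T}$ (first passing to $e^{-\lambda t}v$ with $\lambda$ large if $d\,a(x)-c$ fails to be nonnegative) and reading off the parabolic boundary data — $v(0,x)=u_0'(x)$, $v(t,l_2)=0$ by the Neumann condition, and $0\le v(t,l_1)=u^\epsilon_x(t,l_1)$ by the previous step — yields $|v|\le R$ on $\overline{Q_T}$, which is \eqref{3a_8}.

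The step I expect to be the main obstacle is this last one. In the purely local theory the equation for $u_x$ carries no forcing from the diffusion, whereas here differentiating $\mathcal A$ produces the genuine source $d\int_\Omega J'(x-y)[u^\epsilon(t,y)-u^\epsilon(t,x)]\,dy$, which must be absorbed by combining \eqref{3a_5} with the decay of $J'$ in \textbf{(J0)}; moreover the inflow flux $u^\epsilon_x(t,l_1)$ is itself unknown, which is exactly why $R$ has to be defined through the solution. More conceptually, it is the absence of smoothing and compactness for $\mathcal A$ that makes the viscosity term $-\epsilon u_{xx}$ indispensable at this stage, and the whole scheme is arranged so that, once the bounds \eqref{3a_5}--\eqref{3a_8} are tracked carefully, they stay uniform in $\epsilon$ and survive the limit $\epsilon\to0$.
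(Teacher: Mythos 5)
Your overall architecture (fixed point for existence, maximum principle for the $L^\infty$ bound, differentiate-and-maximum-principle for the gradient bound) is the same as the paper's, and your treatment of \eqref{3a_8} matches it almost step for step: the paper likewise differentiates the equation, absorbs the extra source coming from the nonlocal term (it writes it after an integration by parts as $c_xu^\epsilon+g_x-m(t,x)$ with $m$ as in \eqref{3a_29}, whereas you keep the $J'$ form — both are bounded by \eqref{3a_5} and \textbf{(J0)}--\textbf{(J1)}), uses $z(t,l_2)=0$ and $z(t,l_1)=u^\epsilon_x(t,l_1)\geq0$, and runs the argument on $(R-z)e^{-Kt}$. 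Your existence step is a genuinely different and arguably cleaner route: the paper iterates in $L^\infty(0,T;H^1(\Omega))$, feeding the entire right-hand side (including $-qv^\epsilon_x$, which is why it needs $H^1$ control of the iterate) into the heat semigroup and invoking $L^p$ theory, while you put the drift into the generator so that only the bounded operator $\mathcal A$ and the zeroth-order terms remain, and the contraction closes in $C([0,T];C(\overline\Omega))$. That buys a simpler function space at the cost of having to know the analytic semigroup for $\epsilon\partial_{xx}-q\partial_x$ with the mixed boundary conditions; either is acceptable.

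The genuine gap is in your justification of \eqref{3a_5}. You assert that $0$ and $M=\mathrm{ess}\sup_{\overline\Omega}u_0$ are a sub- and super-solution of \eqref{3a_4} ``using the sign structure of the lower-order terms $cu+g$ carried over from the standing hypotheses.'' There is no such sign structure in the hypotheses of \eqref{3a_3}/\eqref{3a_4}: the only assumption is $c,g\in C^1(Q_\infty)\cap L^\infty(Q_\infty)$. For the constant $0$ to be a subsolution you need $g\geq0$, and for the constant $M$ to be a supersolution you need $c(t,x)M+g(t,x)\leq0$; neither is available, so as written this step does not go through (and you cannot simply wave at the later nonlinear application, because the lemma is stated and used for the general linear problem). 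The paper does not use constant barriers here: it sets $w=u^\epsilon-M$, passes to $w^1=e^{-Kt}w$, and derives a contradiction directly at a positive maximum of $w^1$, with a separate limiting argument (via the cited lemma of Pudelko) to handle the case where the maximum sits at the Neumann endpoint $x_*=l_2$. You should either run that direct extremum argument or make explicit the sign conditions on $c,g$ under which your comparison with constants is valid. A smaller caveat: your closing claim that the bounds ``stay uniform in $\epsilon$'' is not something this lemma delivers, since $R$ contains $\max_{[0,T]}u^\epsilon_x(t,l_1)$, which depends on the solution and hence a priori on $\epsilon$; the lemma itself only asserts the bound for each fixed $\epsilon$.
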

\begin{proof}
 Set $X=L^\infty(0,T;H^1(\Omega))$. For each $v^\epsilon\in X$, consider the linear problem
\begin{equation}\label{3a_23}
\begin{cases}
        u_t^\epsilon-\epsilon u_{xx}^\epsilon=d\int_\Omega J(x-y)[v^\epsilon(t,y)-v^\epsilon(t,x)]dy-qv^\epsilon_x+c(t,x)v^\epsilon+g(t,x),&t\in(0,T),x\in\Omega,\\
           u^\epsilon(t,l_1)=\frac{\partial u^\epsilon}{\partial\bf{\overrightarrow{n}}}(t,l_2)=0,&t\in(0,T),\\
         u^\epsilon(0,x)=u_0(x),&x\in\Omega.
        \end{cases}
 \end{equation}
 Notice that the right-hand side of the first equation for \eqref{3a_23} is bounded on $L^2$ since nonlocal operator is bounded on $X$ and $v^\epsilon_x\in L^\infty(0,T;L^2(\Omega))$. Then the standard $L^p$ theory implies that \eqref{3a_23} admits a unique solution $u^\epsilon\in L^2(0,T;H^2(\Omega))
 $.
Similarly, choosing $\tilde{v}^\epsilon\in X$, and then $\tilde{u}^\epsilon\in L^2(0,T;H^2(\Omega))$ solves
\begin{equation*}
\begin{cases}
        \tilde{u}_t^\epsilon-\epsilon \tilde{u}_{xx}^\epsilon=d\int_\Omega J(x-y)[\tilde{v}^\epsilon(t,y)-\tilde{v}^\epsilon(t,x)]dy-q\tilde{v}^\epsilon_x+c(t,x)\tilde{v}^\epsilon+g(t,x),&t\in(0,T),x\in\Omega,\\
           u^\epsilon(t,l_1)=\frac{\partial u^\epsilon}{\partial\bf{\overrightarrow{n}}}(t,l_2)=0,&t\in(0,T),\\
       \tilde{u}^\epsilon(0,x)=u_0(x),&x\in\Omega.
        \end{cases}
 \end{equation*}
Hence, for given $z^\epsilon:=v^\epsilon-\tilde{v}^\epsilon$, we find that $w^\epsilon:=u^\epsilon-\tilde{u}^\epsilon$ satisfies
\begin{equation*}
\begin{cases}
        w_t^\epsilon-\epsilon w_{xx}^\epsilon=d\int_\Omega J(x-y)[z^\epsilon(t,y)-z^\epsilon(t,x)]dy-qz^\epsilon_x+c(t,x)z^\epsilon,&t\in(0,T),x\in\Omega,\\
           w^\epsilon(t,l_1)=\frac{\partial w^\epsilon}{\partial\bf{\overrightarrow{n}}}(t,l_2)=0,&t\in(0,T),\\
      w^\epsilon(0,x)=0,&x\in\Omega.
        \end{cases}
 \end{equation*}
 Since it follows from  \cite[Theorem 2, Chapter 5.9.2]{Evans} that
 \[\mbox{ess}\sup_{[0,T]}\| w^\epsilon\|_{H^1(\Omega)}\leq C(\epsilon)
\|w^\epsilon\|_{W^{1,2}(0,T;L^2(\Omega))},\]
we have the  following estimation
 \begin{align*}
\mbox{ess}\sup_{[0,T]}&\| w^\epsilon\|_{H^1(\Omega)}\leq C(\epsilon)
\|w^\epsilon\|_{W^{1,2}(0,T;L^2(\Omega))}\leq C(\epsilon)\|w^\epsilon\|_{W^{2,2}(0,T;L^2(\Omega))}\\
&\leq C(\epsilon)(d\|\int_\Omega  J(x-y)[z^\epsilon(t,y)-z^\epsilon(t,x)]dy\|_{L^2(0,T;L^2(\Omega))}+\| qz^\epsilon_x+c(t,x)z^\epsilon\|_{L^2(0,T;L^2(\Omega))})\\
&\leq C(\epsilon)(2d\|J\|_{L^2(0,T;L^2(\Omega))}\|z^\epsilon\|_{L^2(0,T;L^2(\Omega))}+q\|z^\epsilon\|_{L^2(0,T;H^1(\Omega))}
+\|c\|_{C^1(\overline{Q_T})}\|z^\epsilon\|_{L^2(0,T;L^2(\Omega))})\\
&\leq C(\epsilon)\| z^\epsilon\|_{L^2(0,T;H^1(\Omega))}\leq C(\epsilon) T^{1/2} \mbox{ess} \sup_{[0,T]}\| z^\epsilon\|_{H^1(\Omega)},
\end{align*}
where we use embedding theory and standard $L^p$ theory.

As long as $T\in(0,\frac{1}{4 C^2(\epsilon)}]$, we have $\|u^\epsilon-\tilde{u}^\epsilon\|_{L^\infty(0,T;H^1(\Omega))}\leq \frac{1}{2} \|v^\epsilon-\tilde{v}^\epsilon\|_{L^\infty(0,T;H^1(\Omega))}$, which means that the mapping $A:X\to X$ for $A(v^\epsilon)=u^\epsilon$ is a contraction mapping. Based on Banach's Fixed Point Theorem, mapping $A$ has a unique fixed point, and so  \eqref{3a_4} has a unique local-in-time solution $u^\epsilon\in L^\infty(0,T;H^1(\Omega))$.  By embedding theory and standard Schauder theory, we can obtain that $u^\epsilon$ also is a local classical solution.

We next examine the inequalities \eqref{3a_5}. Let $M=\mbox{ess} \sup_{\bar{\Omega} } u_0(x)>0$. The function $w(t,x):=u^\epsilon(t,x)-M$ satisfies
\begin{equation*}
\begin{cases}
        w_t=\epsilon w_{xx}+d\int_\Omega  J(x-y)[w(t,y)-w(t,x)]dy-qw_x+c(t,x)(w+M)+g(t,x),&t\in(0,T),x\in\Omega,\\
          w(t,l_1)=-M<0,\frac{\partial w}{\partial\bf{\overrightarrow{n}}}(t,l_2)=0,&t\in(0,T),\\
      w(0,x)=u_0(x)-M\leq0,&x\in\Omega.
        \end{cases}
 \end{equation*}
{\bf Claim:} The inequality $w(0,x)\leq0$ implies $w(t,x)\leq0$ for all $(t,x)\in \overline{Q_T}$. Define function $w^1=e^{-Kt}w$ and suppose that there is $(t_*,x_*)\in \overline{Q_T}$ such that $w^1(t_*,x_*)=\sup_{\overline{Q_T}} w^1(t,x)>0$. Obviously, $t_*\in(0,T]$ and $x_*\in(l_1,l_2]$, and then $w^1_t(t_*,x_*)\geq0$. Meanwhile, we can choose large enough constant $K>0$ satisfying $$(c(t_*,x_*)-K)w^1(t_*,x_*)+(c(t_*,x_*)M+g(t_*,x_*))e^{-Kt_*}<0 \ \mbox{on} \  \overline{Q_T},$$
 since $c(t,x),g(t,x)\in C^1(\overline{Q_T})$.

  If $x_*\in\Omega$, then it follows from  $ w^1_{xx}(t_*,x_*)\leq0,w^1_x(t_*,x_*)=0$ and $w^1(t_*,x)\leq w^1(t_*,x_*)$ on $\Omega$ that
\begin{align*}
0\leq w^1_t(t_*,x_*)=&e^{-Kt_*}w_t(t_*,x_*)-Ke^{-Kt_*}w(t_*,x_*)\\
&=\epsilon w^1_{xx}(t_*,x_*)+d\int_\Omega  J(x^*-y)[w^1(t_*,y)-w^1(t_*,x_*)]dy-qw^1_x(t_*,x_*)\\
&\quad+(c(t_*,x_*)-K)w^1(t_*,x_*)+(c(t_*,x_*)M+g(t_*,x_*))e^{-Kt_*}\\
&\leq(c(t_*,x_*)-K)w^1(t_*,x_*)+(c(t_*,x_*)M+g(t_*,x_*))e^{-Kt_*}<0,
\end{align*}
which is impossible. If $x_*=l_2$, the boundary condition $\frac{\partial  w}{\partial\bf{\overrightarrow{n}}}|_{x=l_2}=0$ implies $w^1_x(t_*,x_*)=0$. We slightly modify the proof by setting $w^1(t,x_n)=w^1(t,x_*)-\dfrac{1}{n}$. By
\cite[Lemma 3.1]{Pudelko}, the sequence $\{w^1(t,x_n)\}$ has the following properties:
\[\lim\limits_{n\to\infty}w^1_x(t,x_n)=0; \ \limsup\limits_{n\to\infty}w^1_{xx}(t,x_n)\leq0; \ \limsup\limits_{n\to\infty}\int_\Omega  J(x_n-y)[w^1(t,y)-w^1(t,x_n)]dy\leq0.\]
Thus, letting $t=t_*$, as $n\to \infty$, we have
 \begin{align*}
0\leq w^1_t(t_*,x_*)\leq(c(t_*,x_*)-K)w^1(t_*,x_*)+(c(t_*,x_*)M+g(t_*,x_*))e^{-Kt_*}<0,
\end{align*}
 which is impossible. And the claim is true. Thus $u^\epsilon(t,x)\leq M$ on $\overline{Q_T}$, and the proof of the  inequality $0\leq \mbox{ess} \inf_{\bar{\Omega}} u_0(x)\leq u^\epsilon(t,x)$ is analogous.

Finally, we verify the inequality \eqref{3a_8}. Denote $z(t,x)=u^\epsilon_x(t,x)$. By assumption ${\bf{(J1)}}$, we know that $z(t,x)$ satisfies
\begin{equation*}
\begin{cases}
       z_t=\epsilon z_{xx}+d\int_\Omega  J(x-y)[z(t,y)-z(t,x)]dy-qz_x+cz+c_xu^\epsilon+g_x-m(t,x),&t\in(0,T),x\in\Omega,\\
       z(t,l_1)=u^\epsilon_x(t,l_1)\geq0, z(t,l_2)=0,&t\in(0,T),\\
      z(0,x)=(u_0)'(x),&x\in\Omega,
        \end{cases}
 \end{equation*}
 where
\begin{align}\label{3a_29}
m(t,x):=du^\epsilon(t,l_2)J(x-l_2)-du^\epsilon(t,l_1)J(x-l_1)+d\int_\Omega  J_x(x-y)dyu^\epsilon
\end{align}
 is uniformly bounded.
Since $u^\epsilon$ is a uniformly bounded classical solution, $c_xu^\epsilon+g_x-m(t,x)$ is a regular function, and then
 \[|c_xu^\epsilon+g_x-m(t,x)|\leq \|c\|_{C^1}\|u_0\|_\infty+\|g\|_{C^1}+\|m\|_\infty:=R_1.\]
Let $v(t,x):=(R-z(t,x))e^{-Kt}$ for some large enough $K>0$. Clearly, $v(0,x)=R-z(0,x)\geq0$ and $v(t,x)\geq0$ on $\partial\Omega$. We shall show $v(t,x)\geq0$ in $\overline{Q_T}$. Conversely, if there is $(t_*,x_*)\in \overline{Q_{T}}$ such that $v(t_*,x_*)=\inf_{\overline{Q_{T}}} v(t,x)<0$. The choice of $R$ deduces $t_*\in(0,T]$ and $v_t(t_*,x_*)\leq0$. The boundary condition means $x_*\in(l_1,l_2)$, $v_x(t_*,x_*)=0$ and $v_{xx}(t_*,x_*)\geq0$. In view of the assumption ${\bf{(J1)}}$, we know  that
\[d\int_\Omega  J(x_*-y)[v(t_*,y)-v(t_*,x_*)]dy\geq0.\]
Then $v(t_*,x_*)$ satisfies
\begin{align*}
0\geq v_t(t_*,x_*)=&-Kv(t_*,x_*)+\epsilon v_{xx}(t_*,x_*)+d\int_\Omega  J(x_*-y)[v(t_*,y)-v(t_*,x_*)]dy-qv_x(t_*,x_*)\\
&+c(t_*,x_*)v(t_*,x_*)+[c_x(t_*,x_*)u^\epsilon(t_*,x_*)+g_x(t_*,x_*)-m(t_*,x_*)-Rc(t_*,x_*)]e^{-Kt_*}\\
\geq&(c(t_*,x_*)-K)v(t_*,x_*)-(R_1+R\|c\|_{C^1})e^{-Kt_*}\\
\geq &(\|c\|_{C^1}-K)v(t_*,x_*)-(R_1+R\|c\|_{C^1})e^{-Kt_*}>0,
\end{align*}
provided that $K>0$ is a large enough constant such that
\[(\|c\|_{C^1}-K)v(t_*,x_*)-(R_1+R\|c\|_{C^1})e^{-Kt_*}>0.\]
This is a contradiction, and so, $z(t,x)\leq R$ on $\overline{Q_T}$. Similarly, we also can prove $z(t,x)\geq -R$ on $\overline{Q_T}$. The proof is completed.
\end{proof}

In the following, we prove that the solution $u^\epsilon(t,x)$ of regularized problem \eqref{3a_4} converges to a weak solution of problem \eqref{3a_3} as $\epsilon\to0$. We will apply  Aubin-Lions-Simon compactness theorem (Proposition \ref{3proposition4}) and Lebesgue dominated theorem to complete this proof.

\begin{proposition}[\cite{Simon}, Theorem 5]\label{3proposition4}
For $1\leq p_1,p_2\leq \infty$, and $T>0$, assume that Banach spaces $X,B,Y$ satisfy $X\hookrightarrow\hookrightarrow B\hookrightarrow Y$. If $A$ is bounded in $W^{1,p_1}(0,T; Y)$ and $L^{p_2}(0,T; X)$, then $A$ is relatively compact in $L^{p_2}(0,T; B)$ if $1\leq p_2<\infty$ and in $C(0,T; B)$ if $p_2=\infty$, where $X\hookrightarrow\hookrightarrow B$ means that $X$ is compactly embedded in $B$.
\end{proposition}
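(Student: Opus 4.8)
The plan is to deduce the statement from two classical facts: an Ehrling-type interpolation inequality adapted to the chain $X\hookrightarrow\hookrightarrow B\hookrightarrow Y$, and the Fr\'echet--Kolmogorov--Riesz characterisation of precompact subsets of $L^{p_2}(0,T;B)$. First I would prove the interpolation inequality: for every $\eta>0$ there is a constant $C_\eta>0$ with $\|v\|_B\le\eta\|v\|_X+C_\eta\|v\|_Y$ for all $v\in X$. This is the usual argument by contradiction --- were it false, a sequence $(v_n)\subset X$ with $\|v_n\|_B=1$, $\|v_n\|_X$ bounded and $\|v_n\|_Y\to0$ would, by compactness of $X\hookrightarrow\hookrightarrow B$, admit a $B$-convergent subsequence whose limit has zero $Y$-norm, hence is $0$, contradicting $\|v_n\|_B=1$.

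Next I would recall that a bounded set $F\subset L^{p_2}(0,T;B)$, $1\le p_2<\infty$, is relatively compact iff (i) $\bigl\{\int_{t_1}^{t_2}f(t)\,dt:f\in F\bigr\}$ is relatively compact in $B$ for all $0<t_1<t_2<T$, and (ii) $\sup_{f\in F}\|f(\cdot+h)-f(\cdot)\|_{L^{p_2}(0,T-h;B)}\to0$ as $h\to0^{+}$. Condition (i) is immediate: the bound on $A$ in $L^{p_2}(0,T;X)$ and H\"older's inequality bound $\int_{t_1}^{t_2}f$ in $X$ uniformly in $f\in A$, and $X\hookrightarrow\hookrightarrow B$ finishes it. For (ii) I would use the interpolation inequality to split
\[\|f(\cdot+h)-f(\cdot)\|_{L^{p_2}(0,T-h;B)}\le\eta\,\|f(\cdot+h)-f(\cdot)\|_{L^{p_2}(0,T-h;X)}+C_\eta\,\|f(\cdot+h)-f(\cdot)\|_{L^{p_2}(0,T-h;Y)};\]
the first term is $\le2\eta\sup_{f\in A}\|f\|_{L^{p_2}(0,T;X)}$, and the second is controlled through $f(t+h)-f(t)=\int_t^{t+h}f'(s)\,ds$ and the bound on $A$ in $W^{1,p_1}(0,T;Y)$, which by Minkowski's integral inequality and H\"older gives an estimate $\le Ch^{\theta}\sup_{f\in A}\|f'\|_{L^{p_1}(0,T;Y)}$ with some $\theta>0$ when $p_1>1$ and $p_2\le p_1$. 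Letting $\eta\to0$ and then $h\to0$ gives (ii), hence relative compactness in $L^{p_2}(0,T;B)$. For $p_2=\infty$, the $W^{1,p_1}(0,T;Y)$-bound with $p_1>1$ makes $A$ equi-H\"older-continuous as maps $[0,T]\to Y$; the interpolation inequality and the $L^\infty(0,T;X)$-bound upgrade this to equicontinuity into $B$, and then Arzel\`a--Ascoli, together with precompactness of the sections in $B$, yields relative compactness in $C([0,T];B)$.

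The only genuine obstacle is the low-integrability regime: when $p_1=1$, or when $p_2>p_1$, the simple time-translation estimate above no longer produces a positive power of $h$, and one must instead exploit equi-absolute-continuity (uniform integrability) of $\{f':f\in A\}$ in $L^{1}(0,T;Y)$, or interpolate between the space and time bounds, to still force the $Y$-term in the split to $0$ uniformly in $f$; and the case $p_2=\infty$ relies on $p_1>1$ for the time-equicontinuity step. Apart from this bookkeeping over exponents the argument is a routine marriage of the Ehrling inequality with the Fr\'echet--Kolmogorov criterion, and since the result is classical one may alternatively invoke \cite[Theorem 5]{Simon} directly.
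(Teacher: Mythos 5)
The paper offers no proof of this proposition at all: it is quoted verbatim as Theorem 5 of \cite{Simon} and used as a black box, so there is nothing of the paper's to compare against except the citation. Your sketch reconstructs the standard proof, which is essentially Simon's own route: an Ehrling-type interpolation inequality for the chain $X\hookrightarrow\hookrightarrow B\hookrightarrow Y$ combined with the translation characterisation of relatively compact subsets of $L^{p_2}(0,T;B)$ (Simon's Theorem 1), with condition (i) settled by the $L^{p_2}(0,T;X)$ bound plus compactness of $X\hookrightarrow B$ and condition (ii) by the Ehrling split. The outline is sound, and you are right to single out the exponent bookkeeping as the only real work: when $p_1=1$ the estimate $\|f(\cdot+h)-f(\cdot)\|_{Y}\le\int_t^{t+h}\|f'\|_Y$ only gives uniform smallness after integrating in $t$ (Fubini yields an $O(h)$ bound in $L^1(0,T-h;Y)$), and upgrading from $L^1$ to $L^{p_2}$ in time requires interpolating against the $L^{p_2}(0,T;X)$ bound, which is exactly how Simon closes the argument. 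One point worth flagging: your observation that the $C([0,T];B)$ conclusion needs $p_1>1$ is correct and is actually a hypothesis present in Simon's Theorem 5 that the paper's restatement silently drops; the paper is unharmed because it only ever applies the proposition with $p_1=p_2=\infty$, $X=H^1(\Omega)$, $B=L^2(\Omega)$, $Y=H^{-1}(\Omega)$, which is precisely the clean regime where your equicontinuity-plus-Arzel\`a--Ascoli argument goes through without any of the low-integrability caveats.
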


\begin{theorem}
Assume that $\bf{(J)(J1)}$ holds. Then \eqref{3a_3} admits a local weak solution $u(t,x)\in L^\infty(\overline{Q_T})$, where $T$ is given by Lemma \ref{3lemma1}.
\end{theorem}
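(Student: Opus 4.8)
The scheme is vanishing viscosity. Let $u^{\epsilon}$ denote the classical solution of the regularized problem \eqref{3a_4} furnished by Lemma \ref{3lemma1}; the plan is to show that the family $\{u^{\epsilon}\}_{0<\epsilon\le 1}$ is bounded \emph{uniformly in $\epsilon$} in suitable Bochner spaces, extract a strongly convergent subsequence via Proposition \ref{3proposition4}, and pass to the limit $\epsilon\to 0$ in the regularized weak identity to recover the weak-solution identity in the definition above. As a first step I would record the uniform bounds. Lemma \ref{3lemma1} already gives $0\le u^{\epsilon}\le M:=\mbox{ess}\sup_{\bar\Omega}u_0(x)$ and $|u^{\epsilon}_x|\le R$ on $\overline{Q_T}$; one must first verify that $R$ may be chosen independent of $\epsilon$, which reduces to an $\epsilon$-uniform bound on $u^{\epsilon}_x(t,l_1)$ obtained by a short barrier argument near the inflow point $l_1$ (where $u^{\epsilon}\equiv 0$). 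Then, testing \eqref{3a_4} with $u^{\epsilon}_t$ and integrating over $\Omega$ (legitimate by the smoothness of $u^{\epsilon}$, or via a Galerkin scheme), the viscous term gives $-\epsilon\int_{\Omega}u^{\epsilon}_{xx}u^{\epsilon}_t\,dx=\tfrac{\epsilon}{2}\tfrac{d}{dt}\|u^{\epsilon}_x\|_{L^2(\Omega)}^2-\epsilon[u^{\epsilon}_x u^{\epsilon}_t]_{l_1}^{l_2}$, and the boundary contribution vanishes because $u^{\epsilon}_x(t,l_2)=0$ and $u^{\epsilon}(\cdot,l_1)\equiv 0$ forces $u^{\epsilon}_t(t,l_1)=0$; since the remaining right-hand side of \eqref{3a_4} is bounded in $L^{\infty}(Q_T)$ uniformly in $\epsilon$ (by $M$, $R$, $\|c\|_{\infty}$ and $\|g\|_{\infty}$), Young's inequality followed by integration in $t$ yields $\|u^{\epsilon}_t\|_{L^2(Q_T)}\le C$ uniformly. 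Hence $\{u^{\epsilon}\}$ is bounded, independently of $\epsilon$, in $L^{\infty}(0,T;H^1(\Omega))\cap W^{1,2}(0,T;L^2(\Omega))$.

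Next I would apply Proposition \ref{3proposition4} with $X=H^1(\Omega)$, $B=C(\bar\Omega)$, $Y=L^2(\Omega)$, $p_1=2$, $p_2=\infty$: since $\Omega\subset\mathbb R$ is bounded, $H^1(\Omega)\hookrightarrow\hookrightarrow C(\bar\Omega)\hookrightarrow L^2(\Omega)$, so a subsequence (not relabelled) converges in $C([0,T];C(\bar\Omega))=C(\overline{Q_T})$ to some $u\in C(\overline{Q_T})\subset L^{\infty}(\overline{Q_T})$ with $\|u\|_{\infty}\le M$, $u(t,l_1)=0$ and $u(0,\cdot)=u_0$. In addition, the uniform bounds give $u^{\epsilon}_x\rightharpoonup u_x$ and $u^{\epsilon}_t\rightharpoonup u_t$ weakly in $L^2(Q_T)$, the weak limits being identified with the distributional $x$- and $t$-derivatives of $u$ via $u^{\epsilon}\to u$ in $L^2(Q_T)$.

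It remains to pass to the limit. Multiplying \eqref{3a_4} by an arbitrary $\phi\in C_0^{\infty}(\overline{Q_T})$ and integrating over $Q_T$, the viscous term is treated by one integration by parts: $\epsilon\int_{Q_T}u^{\epsilon}_{xx}\phi=-\epsilon\int_{Q_T}u^{\epsilon}_x\phi_x+\epsilon\int_0^T[u^{\epsilon}_x\phi]_{l_1}^{l_2}\,dt$; the boundary term at $l_2$ vanishes because $\phi(t,l_2)=0$, the one at $l_1$ is bounded by $\epsilon R\,T\,\|\phi\|_{\infty}$, and $\big|\epsilon\int_{Q_T}u^{\epsilon}_x\phi_x\big|\le \epsilon R\,|Q_T|^{1/2}\|\phi_x\|_{L^2}$, so both tend to $0$ as $\epsilon\to 0$. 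The nonlocal term passes to the limit by Lebesgue dominated convergence: on $Q_T\times\Omega$ the integrand $d\,J(x-y)[u^{\epsilon}(t,y)-u^{\epsilon}(t,x)]\phi(t,x)$ is dominated by $d\,\|J\|_{\infty}M\|\phi\|_{\infty}\in L^1$ and converges pointwise (using $u^{\epsilon}\to u$ uniformly), after which Fubini restores the iterated form. The terms $q\,u^{\epsilon}_x\phi$ and $\phi\,u^{\epsilon}_t$ pass by weak $L^2$ convergence, $c\,u^{\epsilon}\phi\to c\,u\phi$ uniformly, and the $g$-term is unchanged. Collecting all limits shows that $u$ satisfies the weak identity of the definition, so $u\in L^{\infty}(\overline{Q_T})$ is the desired local weak solution of \eqref{3a_3}.

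The main obstacle, I expect, is localized at the inflow boundary $l_1$ and is two-fold: (i) making the gradient bound $|u^{\epsilon}_x|\le R$ genuinely uniform in $\epsilon$, since the constant in Lemma \ref{3lemma1} nominally contains $\max_{[0,T]}u^{\epsilon}_x(t,l_1)$; and (ii) disposing of the vanishing-viscosity boundary term $\epsilon\,u^{\epsilon}_x(t,l_1)\phi(t,l_1)$, which is admissible only because the test class $C_0^{\infty}(\overline{Q_T})$ imposes vanishing at $l_2$ but not at $l_1$, so one must rely precisely on the uniform bound from (i) to make this term $O(\epsilon)$. Once those uniform estimates are secured, the compactness argument and the limit passage are routine.
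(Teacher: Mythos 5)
Your proposal is correct and follows the paper's overall skeleton (vanishing viscosity, $\epsilon$-uniform bounds, Aubin--Lions--Simon compactness, limit passage in the weak identity), but the two key technical steps are implemented differently, and in a way worth comparing. For the time derivative, the paper never performs an energy estimate: it bounds $u^{\epsilon}_t$ in $H^{-1}(\Omega)$ uniformly by pairing the equation with $\varphi\in C_0^\infty(\Omega)$ and throwing all derivatives onto $\varphi$, then applies Proposition \ref{3proposition4} with the triple $(H^1,L^2,H^{-1})$ and $p_1=p_2=\infty$ to get convergence only in $C(0,T;L^2(\Omega))$; the limit is then passed with every derivative transferred onto the test function, and the weak formulation is recovered by ``integrating by parts back'' at the very end. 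Your route instead tests with $u^{\epsilon}_t$ to get $\|u^{\epsilon}_t\|_{L^2(Q_T)}\le C$ uniformly and uses the triple $(H^1,C(\bar\Omega),L^2)$, which buys uniform convergence on $\overline{Q_T}$ and, crucially, weak $L^2$ limits for $u^{\epsilon}_x$ and $u^{\epsilon}_t$ that identify $u_x,u_t\in L^2(Q_T)$ directly; this makes the membership $u\in W^{1,2}(\overline{Q_T})$ required by the definition of weak solution, and the final identity, transparent, whereas the paper's last integration by parts tacitly presupposes derivatives of $u$ it never controlled. Both routes share the same genuine weak point, which you correctly isolate: the constant $R$ of Lemma \ref{3lemma1} contains $\max_{[0,T]}u^{\epsilon}_x(t,l_1)$ and is therefore not manifestly $\epsilon$-uniform, yet the paper uses $|\int_\Omega(u^\epsilon_x)^2dx|\le R^2|\Omega|$ for the $L^\infty(0,T;H^1)$ bound and needs $\epsilon_n\int_0^Tu^{\epsilon_n}_x(t,l_1)\phi(t,l_1)\,dt\to0$ without comment; your proposal flags this and sketches a barrier argument at the inflow point, which is exactly the supplement both proofs require. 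Carrying out that barrier estimate is the one item you should write out in full before regarding the argument as complete.
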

\begin{proof}
First of all, we use Proposition \ref{3proposition4}  to show that $\{u^\epsilon(t,x)|\epsilon\in(0,1]\}$ is relatively compact in $C(0,T; L^2(\Omega))$. Choosing $p_1=p_2=\infty$ and $X=H^1(\Omega),B=L^2(\Omega),Y=H^{-1}(\Omega)=(H^1_0(\Omega))^*$, where we define $\|u\|_{H^{-1}(\Omega)}=\sup\{\int_\Omega(DuD\varphi+u\varphi)dx|\varphi\in C^\infty_0(\Omega)\}$. Obviously, $H^1(\Omega)\hookrightarrow\hookrightarrow L^2(\Omega)\hookrightarrow H^{-1}(\Omega)$. Then for any $\varphi\in C^\infty_0(\Omega)$,
\begin{align*}
|\int_\Omega(Du^\epsilon D\varphi+u^\epsilon\varphi)dx|&\leq |\int_\Omega Du^\epsilon D\varphi dx|+|\int_\Omega u^\epsilon\varphi dx|\leq|\int_\Omega u^\epsilon D^2\varphi dx|+|\int_\Omega u^\epsilon\varphi dx|\\
&\leq \|u_0\|_\infty(\int_\Omega |D^2\varphi|+|\varphi| dx)\leq 2\|u_0\|_\infty \|\varphi\|_{C_0^\infty}|\Omega|<\infty.
\end{align*}
Notice that
\begin{align*}
\int_\Omega |u^\epsilon_tD^2\varphi| dx&\leq \int_\Omega\left| [\epsilon u_{xx}^\epsilon+d\int_\Omega J(x-y)[u^\epsilon(t,y)-u^\epsilon(t,x)]dy-qu^\epsilon_x+c(t,x)u^\epsilon+g(t,x)]D^2\varphi\right| dx\\
&\leq  \int_\Omega \left[\epsilon|u^\epsilon D^4\varphi|+ 2d\|u_0\|_\infty \|\varphi\|_{C_0^\infty} +|qu^\epsilon D^3\varphi|+\|c\|_\infty |u^\epsilon D^2\varphi|\right]dx+ \int_\Omega \|g\|_\infty |D^2\varphi|dx\\
&\leq  \|\varphi\|_{C_0^\infty}|\Omega|\|u_0\|_\infty(1+2d+|q| +\|c\|_\infty)+ \|\varphi\|_{C_0^\infty}|\Omega|\|g\|_\infty<\infty.
\end{align*}
Similarly,
$\int_\Omega |u^\epsilon_t\varphi| dx\leq  \|\varphi\|_{C_0^\infty}|\Omega|\|u_0\|_\infty(1+2d+|q| +\|c\|_\infty)+ \|\varphi\|_{C_0^\infty}|\Omega|\|g\|_\infty<\infty$. Thus, we have
\begin{align*}
|\int_\Omega (Du^\epsilon_tD\varphi+u^\epsilon_t\varphi) dx|\leq  \int_\Omega |u^\epsilon_tD^2\varphi|dx+\int_\Omega|u^\epsilon_t\varphi| dx <\infty.
\end{align*}
The above estimates have applied inequalities \eqref{3a_5} and integrating by part formula. It means that $\|u^\epsilon,u^\epsilon_t\|_{H^{-1}(\Omega)}<\infty$, namely, there exists a constant $C>0$ independent of $\epsilon$ such that
\[\|u^\epsilon\|_{W^{1,\infty}(0,T; H^{-1}(\Omega))}=\mbox{ess} \  sup_{[0,T]}\|u^\epsilon\|_{H^{-1}(\Omega)}+\mbox{ess} \   sup_{[0,T]}\|u^\epsilon_t\|_{H^{-1}(\Omega)}\leq C<\infty.\]
And so, $\{u^\epsilon(t,x)|\epsilon\in(0,1]\}$ is  bounded in $W^{1,\infty}(0,T; H^{-1}(\Omega))$.

Obviously, $|\int_\Omega (u^\epsilon)^2 dx|\leq \|u_0^\epsilon\|_\infty^2|\Omega|$ and $|\int_\Omega (u^\epsilon)_x^2 dx|\leq R^2|\Omega|$ by Lemma \ref{3lemma1}, that is to say, $\{u^\epsilon(t,x)|\epsilon\in(0,1]\}$ is bounded in $L^{\infty}(0,T; H^{1}(\Omega))$. Hence, Proposition \ref{3proposition4} ensures that $\{u^\epsilon(t,x)|\epsilon\in(0,1]\}$ is relatively compact in $C(0,T; L^2(\Omega))$, and then there exist $\{\epsilon_n\}_{n\in\mathbb{N}}\subset(0,1]$ and function $u$ such that $u^{\epsilon_n}$ converges to $u$ in  $C(0,T; L^2(\Omega))$ as $\epsilon_n\to 0$. Combining the inequalities \eqref{3a_5}, we have $u\in L^\infty(\overline{Q_T})$.

Now, we show that  $u(t,x)\in L^\infty(\overline{Q_T})$ is a local weak  solution of \eqref{3a_3}. Multiplying the first equation of \eqref{3a_4} by $\phi\in C_0^\infty(\overline{Q_T})$, where $$ C_0^\infty(\overline{Q_T}):=\{\phi\in C^\infty(\overline{Q_T})|  D^k_x\phi(t,l_2)=0 \ \forall t\in(0,T), \forall k\in\mathbb{N}\},$$ and integrating over $[0,T]\times\Omega$, then using the integrating by part formula and the  condition $\bf{(J1)}$, we deduce
\begin{align*}
\int_\Omega &\phi(T,x)u^{\epsilon_n}(T,x)dx-\int_\Omega \phi(0,x)u^{\epsilon_n}_0(x)dx-\int_\Omega\int_0^T \phi_tu^{\epsilon_n}dtdx\\
 =&d\int_\Omega\int_0^T \int_\Omega J(x-y)[\phi(t,y)-\phi(t,x)]dyu^{\epsilon_n}dtdx+q\int_\Omega\int_0^T \phi_xu^{\epsilon_n}dtdx\\&+\epsilon_n\int_\Omega\int_0^T \phi_{xx}u^{\epsilon_n}dtdx+\int_\Omega\int_0^T (c\phi u^{\epsilon_n}+g\phi) dtdx+\epsilon_n\int_0^Tu^{\epsilon_n}_x(t,l_1)\phi(t,l_1)dt.
\end{align*}
As $\epsilon_n\to 0$, applying the Lebesgue dominated theorem, we obtain
\begin{align*}
\int_\Omega &\phi(T,x)u(T,x)dx-\int_\Omega \phi(0,x)u_0(x)dx-\int_\Omega\int_0^T \phi_tudtdx\\
 &=d\int_\Omega\int_0^T \int_\Omega J(x-y)[\phi(t,y)-\phi(t,x)]dyu(t,x)dtdx+\int_\Omega\int_0^T (q\phi_xu+c\phi u+g\phi) dtdx.
\end{align*}
 Again using the integrating by part formula, $u$ satisfies
\begin{align*}
\int_\Omega\int_0^T \phi u_tdtdx
 =\int_\Omega\int_0^T [d\int_\Omega J(x-y)[u(t,y)-u]dy -q u_x+c u+g]\phi dtdx-\int_0^Tu(t,l_1)\phi(t,l_1)dt,
\end{align*}
 which verifies that $u$ is a weak solution of \eqref{3a_3} since the boundary condition $u(t,l_1)=0$. The proof is completed.
\end{proof}

\begin{remark}\label{3remark2}
As shown in the proof of \cite[Lemma 2,2]{tang2023freeboundry}, since $f(t,x,u)$ satisfies \textbf{(F1)(F2)}, we know that $u\in C^1(\overline{Q_T})$ and $u$ is also a classical solution of \eqref{3a_3}.
\end{remark}

\begin{lemma}[Maximum principle]\label{3lemma211}
For any given $u_0(x)\geq0$ and $T>0$, assume that, for some $c(t,x)\in L^\infty(\overline{Q_T})$, function $u(t,x)\in C^1(\overline{Q_T})$ satisfies
\begin{equation}\label{3a_6}
\begin{cases}
        u_t\geq d\int_\Omega J(x-y)[u(t,y)-u(t,x)]dy-qu_x+c(t,x)u,&t\in(0,T),x\in{\Omega},\\
        u(t,l_1)\geq0,&t\in(0,T),\\
       u(0,x)=u_0(x)\geq0,&x\in{\Omega}.
        \end{cases}
 \end{equation}
Then $u(t,x)\geq0$ in $\overline{Q_T}$. Moreover, $u(t,x)>0$ in $(0,T]\times(l_1,l_2]$ if $u_0(x)\not\equiv0$ in $\bar{\Omega}$.
\end{lemma}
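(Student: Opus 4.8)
The plan is to prove the two assertions separately: non-negativity first, and then strict positivity, which is the part that requires real work. For non-negativity I would mimic (but simplify) the Claim inside the proof of Lemma~\ref{3lemma1}: set $v(t,x):=e^{-Kt}u(t,x)$ with a constant $K>\|c\|_{L^\infty(\overline{Q_T})}$. Since $u\in C^1(\overline{Q_T})$, every term in \eqref{3a_6} except $c\,u$ is continuous on $\overline{Q_T}$, so the a.e.\ inequality upgrades to the everywhere bound $u_t-d\int_\Omega J(x-y)[u(t,y)-u(t,x)]\,dy+q\,u_x\ge-\|c\|_{L^\infty}|u|$ (the difference of the two continuous sides is $\ge0$ a.e., hence everywhere), which rewrites as $v_t\ge d\int_\Omega J(x-y)[v(t,y)-v(t,x)]\,dy-q\,v_x-\|c\|_{L^\infty}|v|-Kv$. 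If $\min_{\overline{Q_T}}v<0$ it is attained (compactness) at some $(t_*,x_*)$; then $t_*>0$ because $v(0,\cdot)=u_0\ge0$, and $x_*\in(l_1,l_2]$ because $v(t,l_1)\ge0$. At that point $v_t\le0$; the nonlocal term is $\ge0$ since $v(t_*,x_*)$ is the global minimum; $-q\,v_x(t_*,x_*)\ge0$ — equal to $0$ if $x_*\in(l_1,l_2)$, and $\ge0$ if $x_*=l_2$ because there $v$ attains a one-sided spatial minimum so $v_x(t_*,l_2)\le0$; and $-\|c\|_{L^\infty}|v(t_*,x_*)|-Kv(t_*,x_*)=(\|c\|_{L^\infty}-K)v(t_*,x_*)>0$. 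This forces $0\ge v_t(t_*,x_*)>0$, a contradiction, so $u\ge0$ on $\overline{Q_T}$. Note that, unlike in Lemma~\ref{3lemma1}, no ``sliding to the boundary'' argument (\cite{Pudelko}) is needed, because there is no viscosity term and only the sign of a first-order term at $x=l_2$ is used.

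For strict positivity, assume $u\ge0$ and $u_0\not\equiv0$, so $u_0>0$ on some open subinterval of $\Omega$. Since $\int_\Omega J(x-y)\,dy\le1$ and $u\ge0$, \eqref{3a_6} gives $u_t+q\,u_x\ge d\int_\Omega J(x-y)u(t,y)\,dy-Cu$ with $C:=d+\|c\|_{L^\infty}$. Fixing $(t,x)\in(0,T]\times(l_1,l_2]$, let $\xi(\sigma):=x-q(t-\sigma)$ be the characteristic through it and $s:=\max\{0,\,t-(x-l_1)/q\}$ the time at which the backward characteristic leaves $\overline\Omega$; then $\psi(\sigma):=u(\sigma,\xi(\sigma))$ satisfies $\frac{d}{d\sigma}\big(e^{C\sigma}\psi(\sigma)\big)\ge d\,e^{C\sigma}\int_\Omega J(\xi(\sigma)-y)u(\sigma,y)\,dy\ge0$ on $[s,t]$, and integrating (using $u(s,\xi(s))\ge0$) yields the Duhamel-type bound
\[
u(t,x)\ \ge\ d\,e^{-Ct}\int_{s}^{t} e^{C\sigma}\left(\int_\Omega J(\xi(\sigma)-y)\,u(\sigma,y)\,dy\right)d\sigma .
\]
If $u(t,x)=0$, the integrand here is continuous, nonnegative, and integrates to $0$, hence vanishes on $[s,t]$; fixing $\rho>0$ with $J>0$ on $(-\rho,\rho)$ (possible since $J(0)>0$ and $J$ is continuous), non-negativity of $u$ then forces $u(\sigma,\cdot)\equiv0$ on $\overline{B(\xi(\sigma),\rho)}\cap\overline\Omega$ for every $\sigma\in[s,t]$: a zero of $u$ propagates to a full spatial tube of radius $\rho$ around the backward characteristic segment.

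Finally, suppose for contradiction that $u(t_0,x_0)=0$ for some $(t_0,x_0)\in(0,T]\times(l_1,l_2]$. Iterating the previous observation — at each stage choosing a new zero of $u$ with spatial coordinate in $(l_1,l_2]$ lying on the current tube (shifted by $\pm\rho/2$) and following its backward characteristic, which either reaches $\{t=0\}$ or meets the inflow boundary $\{x=l_1\}$ while decreasing the time coordinate by at least $\rho/(2q)$, and at each step enlarging the zero set of $u(0,\cdot)$ by at least $\rho/2$ — one reaches $t=0$ after finitely many steps and then exhausts $\overline\Omega=[l_1,l_2]$ with zeros of $u_0$ after finitely many further steps, both finite because $\overline\Omega$ is bounded. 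This gives $u_0\equiv0$ on $\overline\Omega$, contradicting $u_0\not\equiv0$, so $u>0$ on $(0,T]\times(l_1,l_2]$. The non-negativity statement is a routine single-point maximum-principle argument; the main obstacle is this last iteration, namely combining the Duhamel-type inequality along characteristics with the finite range of $J$ and carefully tracking the characteristic geometry near $x=l_1$ to verify that the zero set must fill the bounded domain at the initial time in finitely many spreading steps.
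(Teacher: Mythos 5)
Your proof is correct, and while the non-negativity half is essentially the paper's argument (evaluate the differential inequality at a negative minimum of $e^{-Kt}u$; the paper uses $e^{kt}$ with $k+c<0$, which is the same device), the strict-positivity half takes a genuinely different route. The paper argues in the opposite order: at a first zero $(t^*,x^*)$ it uses the minimum-point structure to force $\int_\Omega J(x^*-y)w(t^*,y)\,dy=0$ and hence (via $J(0)>0$) to spread the zero over the \emph{whole time-slice} $\{t^*\}\times\bar\Omega$, and only then runs a Gronwall inequality along the characteristics $s\mapsto(t+s,x+qs)$ to pull the vanishing back to $t=0$. You instead integrate the inequality along the \emph{backward} characteristic from the zero itself (the Duhamel bound), which turns one zero into a $\rho$-tube of zeros reaching either $\{t=0\}$ or the inflow boundary $\{x=l_1\}$, and then iterate with restarts. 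What your version buys: it makes explicit the two geometric facts the paper glosses over, namely that the time-slice spreading really requires an open-and-closed/connectedness step (the paper's "otherwise there is a sequence $x_n\to x^*$ with $w(t^*,x_n)>0$" does not follow directly from $w(t^*,\cdot)\not\equiv0$), and that characteristics can exit $\bar\Omega$, which your restart at $x=l_1$ with time decrement $\rho/(2q)$ handles cleanly. What it costs is the bookkeeping of the two-phase iteration; note in this connection that your phrase "at each step enlarging the zero set of $u(0,\cdot)$ by at least $\rho/2$" is accurate only for the second phase (horizontal spreading at $t=0$ after a characteristic has reached the initial slice) — during the first phase you are only driving the restart time down — but since you separate the two finite counts ("reaches $t=0$ after finitely many steps and then exhausts $\bar\Omega$ after finitely many further steps"), the argument closes correctly.
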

\begin{proof}
Choose a negative constant $k$ such that $k+c(t,x)<0$ in $\overline{Q_T}$. Then  $w(t,x):=e^{kt}u(t,x)$ satisfies
\begin{align}\label{3a_7}
w_t\geq d\int_\Omega J(x-y)[w(t,y)-w(t,x)]dy-qw_x+(c(t,x)+k)w.
\end{align}
 Suppose that there exists $(t_0,x_0)\in \overline{Q_T}$ such that $w_{\inf}:=w(t_0,x_0)=\inf_{\overline{Q_T}} w(t,x)<0$. Similar to the proof of Lemma \ref{3lemma1}, we deduce
\begin{align*}
0\geq w_t(t_0,x_0)&\geq d\int_\Omega J(x_0-y)[w(t_0,y)-w(t_0,x_0)]dy-q w_x(t_0,x_0)+ (c(t_0,x_0)+k)w(t_0,x_0)\\
&\geq (c(t_0,x_0)+k)w(t_0,x_0)>0,
\end{align*}
which is a contradiction. Thus, $w(t,x)\geq0$ in $\overline{Q_T}$, namely, $u(t,x)\geq0$ in $\overline{Q_T}$.


Now, we are still in a position to prove that $u(t,x)>0$ in $(0,T]\times(l_1,l_2]$ if $u_0(x)\not\equiv0$ in ${\Omega}$. It is equivalent to proving $w(t,x)>0$ in $(0,T]\times(l_1,l_2]$. Suppose that there is $(t^*,x^*)\in(0,T]\times(l_1,l_2]$ such that $w(t^*,x^*)=0$.
We observe that $w(t^*,x)\equiv0$ for any $x\in\bar{\Omega}$. Otherwise, we can find a sequence $\{x_n\}_{n=1}^\infty\subset{\Omega}$ with $x_n\to x^*$ such that $w(t^*,x_n)>0$ and $w(t^*,x_n)\to w(t^*,x^*)$ as $n\to\infty$. Noting that $w_t(t^*,x^*)\leq0$ and $w_x(t^*,x^*)\leq0$, we obtain
\[0\geq w_t(t^*,x^*) \geq d\int_\Omega J(x^*-y)[w(t^*,y)-w(t^*,x^*)]dy-qw_x(t^*,x^*)+(c(t^*,x^*)+k)w(t^*,x^*),\]
which implies $\int_\Omega J(x^*-y)w(t^*,y)dy=0$. In view of $J(0)>0$ and $w(t^*,y)\geq0$ in $\bar{\Omega}$, it is easy to get $w(t^*,y)=0$ for all $y$ near $x^*$, which is a contradiction with $w(t^*,x_n)>0$.
Define $z(s)=w(t+s,x+qs)$. By a simple calculation, we have
\begin{align*}
z'(s)&=w_t(t+s,x+qs)+qw_x(t+s,x+qs)\\
&\geq d\int_\Omega J(x+qs-y)[w(t+s,y)-w(t+s,x+qs)]dy+[c(t+s,x+qs)+k]w(t+s,x+qs)\\
&\geq \inf_{{Q_\infty}}[k+c(t,x)-d]z(s)\triangleq \theta z(s).
\end{align*}
And so, $[e^{-\theta s}z(s)]'\geq 0$, namely, for any $t\in[0,t^*]$, it follows that
\[e^{-\theta (t^*-t)}z(t^*-t)=e^{-\theta (t^*-t)}w(t^*,x+q(t^*-t))\equiv0\geq z(0)=w(t,x)\geq0.\]
Thus, $w(t,x)\equiv0$ in $[0,t^*]\times\bar{\Omega}$, and it contradicts with $u_0(x)\not\equiv0$ in $\bar{\Omega}$. The proof is completed.
\end{proof}

\begin{theorem}\label{3theorem1}
Assume that $\bf{(J)(J1)}$ holds. Then \eqref{3a_3} admits a unique local classical solution $u(t,x)\in C^1(\overline{Q_T})$, where $T$ is given by Lemma \ref{3lemma1}.
\end{theorem}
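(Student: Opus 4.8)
The plan is to treat existence and uniqueness separately, with the maximum principle of Lemma \ref{3lemma211} doing the real work for the uniqueness half.

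For existence there is essentially nothing new to do. The preceding theorem already produces a local weak solution $u \in L^\infty(\overline{Q_T})$ on the time interval supplied by Lemma \ref{3lemma1}, obtained as a $C(0,T;L^2(\Omega))$-limit of the regularized solutions $u^{\epsilon_n}$. To see that this $u$ actually lies in $C^1(\overline{Q_T})$ and solves \eqref{3a_3} in the classical sense, I would argue as indicated in Remark \ref{3remark2}: rewrite the equation along the characteristics $s \mapsto x + qs$ as the ODE $\tfrac{d}{ds}u(t+s,x+qs) = h(t+s,x+qs)$ with right-hand side $h(t,x) = d\int_\Omega J(x-y)[u(t,y)-u(t,x)]\,dy + c(t,x)u + g(t,x)$; since $c,g \in C^1$, $u \in L^\infty$, and $J$ satisfies \textbf{(J1)} so that $\int_\Omega J(x-y)u(t,y)\,dy$ can be differentiated in $x$ (integrating by parts and picking up the continuous boundary contributions $J(x-l_i)$), a bootstrap upgrades $u$ first to being continuous and then to $C^1(\overline{Q_T})$, and shows the equation holds pointwise.

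For uniqueness, suppose $u_1, u_2 \in C^1(\overline{Q_T})$ both solve \eqref{3a_3}. Then $w := u_1 - u_2$ is $C^1$ and satisfies the homogeneous linear system
\begin{equation*}
\begin{cases}
w_t = d\int_\Omega J(x-y)[w(t,y)-w(t,x)]\,dy - q w_x + c(t,x)w, & t\in(0,T),\ x\in\Omega,\\
w(t,l_1)=0, & t\in(0,T),\\
w(0,x)=0, & x\in\Omega.
\end{cases}
\end{equation*}
In particular $w$ (with equality, hence a fortiori with the inequality $\geq$) fits the hypotheses \eqref{3a_6} of Lemma \ref{3lemma211} with $u_0\equiv 0$ and with $c\in L^\infty(\overline{Q_T})$, so that lemma gives $w\geq 0$ on $\overline{Q_T}$. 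Since the system is linear, $-w$ solves the same system, and applying Lemma \ref{3lemma211} to $-w$ yields $-w\geq 0$, i.e. $w\leq 0$. Hence $w\equiv 0$ and $u_1\equiv u_2$.

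I expect the only genuine obstacle to be the regularity bootstrap in the existence step, since the nonlocal term has no smoothing effect whatsoever; the idea is to lean on the transport structure $\partial_t + q\partial_x$ to propagate forward the $C^1$ regularity of $u_0$, $c$, and $g$, using \textbf{(J1)} to control the convolution. Once $C^1$ regularity is established, the uniqueness half is merely the two-sided application of the maximum principle and requires no further estimates.
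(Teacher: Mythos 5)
Your proposal is correct and follows essentially the same route as the paper: existence is inherited from the weak solution of the preceding theorem upgraded to a classical one (the paper defers this regularity step to Remark \ref{3remark2} and a cited preprint, whereas you sketch the transport-characteristics bootstrap explicitly), and uniqueness is obtained exactly as in the paper by applying the maximum principle of Lemma \ref{3lemma211} to $w=u_1-u_2$ and to $-w$.
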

\begin{proof}
By Remark \ref{3remark2}, we know that \eqref{3a_3} admits a local classical solution $u(t,x)$. Here, we only prove the uniqueness of classical solution $u$. Suppose that for any given $u_0(x)$, \eqref{3a_3} has two different solution $u_1$ and $u_2$. Then $w:=u_1-u_2$ satisfies
\begin{equation*}
\begin{cases}
        w_t=d\int_\Omega J(x-y)[w(t,y)-w(t,x)]dy-qw_x+c(t,x)w,&t\in(0,T),x\in{\Omega},\\
        w(t,l_1)=0,&t\in(0,T),\\
       w(0,x)=0,&x\in{\Omega}.
        \end{cases}
 \end{equation*}
By Lemma \ref{3lemma211}, we have $w\geq 0$ on $\overline{Q_T}$, and $w\leq 0$ on $\overline{Q_T}$.
 And so $u_1-u_2=0$. It is a contradiction with our assumption. The uniqueness is completed.
\end{proof}

In the followings, we shall apply the upper and
lower solution methods to get the existence of solutions of \eqref{3a_11}, so we define the  upper and lower solutions as follows:
\begin{definition}
The function $\bar{u}(t,x)\in C^1(\overline{Q_T})$ is  called the upper solution of \eqref{3a_11} if
\begin{equation*}
\begin{cases}
        \bar{u}_t\geq d\int_\Omega J(x-y)[\bar{u}(t,y)-\bar{u}(t,x)]dy-q\bar{u}_x+f(t,x,\bar{u}),&t>0,x\in{\Omega},\\
         \bar{u}(t,l_1)\geq0,&t\in(0,T),\\
       \bar{u}(0,x)\geq u_0(x),&x\in{\Omega}.
        \end{cases}
 \end{equation*}
Moreover, we can also define a lower solution $\underline{u}(t,x)$  by reversing all the inequalities in the above equation.
\end{definition}

Combining with Lemma \ref{3lemma211},  naturally, it gives a ordering of upper solution and lower solution:
\begin{lemma}
(Comparison principle ) If $\bar{u}(t,x),\underline{u}(t,x)$ are respectively an upper solution and a lower solution of \eqref{3a_11} and $f(t,x,u)$ satisfies ${\bf{(F1)(F2)}}$ with $\tilde{b}= \min_{\overline{Q_T}}\min\{\bar{u},\underline{u}\}$ and $\tilde{B}= \max_{\overline{Q_T}}\max\{\bar{u},\underline{u}\}$, then $\bar{u}\geq\underline{u}$ in $Q_T$.
\end{lemma}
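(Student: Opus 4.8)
The plan is to reduce the statement to the Maximum Principle of Lemma \ref{3lemma211} by forming the difference $w := \bar{u} - \underline{u}$ and absorbing the nonlinearity $f$ into a linear zeroth-order coefficient via the mean value theorem. First I would record what $w$ satisfies: subtracting the defining differential inequalities of the upper and lower solutions gives, for $t\in(0,T)$, $x\in\Omega$,
\begin{align*}
w_t &\geq d\int_\Omega J(x-y)[w(t,y)-w(t,x)]\,dy - q w_x + f(t,x,\bar{u}) - f(t,x,\underline{u}),
\end{align*}
together with $w(t,l_1)\geq 0$ for $t\in(0,T)$ and $w(0,x)\geq 0$ for $x\in\Omega$. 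Since $\bar{u},\underline{u}\in C^1(\overline{Q_T})$, both take values in the compact interval $[\tilde{b},\tilde{B}]$ with $\tilde{b}=\min_{\overline{Q_T}}\min\{\bar{u},\underline{u}\}$ and $\tilde{B}=\max_{\overline{Q_T}}\max\{\bar{u},\underline{u}\}$, so assumption \textbf{(F1)} applies on that range.

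The key step is the linearization. By \textbf{(F1)}, $f$ is Lipschitz continuous (indeed $C^1$) in $u$ on $[\tilde{b},\tilde{B}]$, so I can write
\begin{align*}
f(t,x,\bar{u}(t,x)) - f(t,x,\underline{u}(t,x)) = c(t,x)\,w(t,x),
\end{align*}
where $c(t,x) := \int_0^1 f_u\bigl(t,x,\underline{u}(t,x) + \tau(\bar{u}(t,x)-\underline{u}(t,x))\bigr)\,d\tau$ when $w(t,x)\neq 0$, and $c(t,x):=0$ otherwise; in either case $|c(t,x)|$ is bounded by the Lipschitz constant $N$ of $f$ in $u$ on $[\tilde{b},\tilde{B}]$, so $c\in L^\infty(\overline{Q_T})$. (One does not even need $c$ continuous here, only bounded, which is exactly the hypothesis under which Lemma \ref{3lemma211} is stated.) Substituting, $w$ satisfies the system \eqref{3a_6} with this coefficient $c$, initial datum $w(0,\cdot)\geq 0$, and boundary value $w(t,l_1)\geq 0$.

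Then Lemma \ref{3lemma211} applies directly and yields $w(t,x)\geq 0$ in $\overline{Q_T}$, that is, $\bar{u}\geq\underline{u}$ in $\overline{Q_T}$ (in particular in $Q_T$), which is the claim. The only mild subtlety—and the step I would be most careful about—is checking that the integrated coefficient $c(t,x)$ is genuinely a legitimate $L^\infty$ function with the sign/size control needed by Lemma \ref{3lemma211}; this is where assumption \textbf{(F1)} is used in full, and it is the reason the hypotheses fix $\tilde{b}$ and $\tilde{B}$ in terms of $\bar{u},\underline{u}$ rather than leaving $f$ merely locally Lipschitz. Everything else is a direct invocation of the already-established maximum principle, so no further obstacle is expected.
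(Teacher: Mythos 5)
Your proposal is correct and follows exactly the route the paper intends: the paper states this lemma as an immediate consequence of the maximum principle (Lemma \ref{3lemma211}), obtained by forming $w=\bar{u}-\underline{u}$ and absorbing $f(t,x,\bar{u})-f(t,x,\underline{u})$ into a bounded zeroth-order coefficient via \textbf{(F1)}, which is precisely your linearization step. The only cosmetic remark is that your case distinction for $c(t,x)$ is unnecessary, since the integral $\int_0^1 f_u(t,x,\underline{u}+\tau(\bar{u}-\underline{u}))\,d\tau$ already defines a bounded (indeed continuous) coefficient regardless of whether $w$ vanishes.
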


According to the assumption ${\bf{(F1)}}$, we choose a large constant $K$ satisfying
\begin{align}\label{3a_9}
|f(t,x,u_1)-f(t,x,u_2)|\leq K|u_1-u_2|.
\end{align}
For given  upper and lower solutions $\bar{u}(t,x),\underline{u}(t,x)$, by using $\underline{u}^{(0)}=\underline{u}$  and $\bar{u}^{(0)}=\bar{u}$ as the initial iterations, we can construct sequences $\{ \underline{u}^{(m)}\}^\infty_{m=1}$ and $\{\bar{u}^{(0)}\}^\infty_{m=1}$ such that the following equation
\begin{equation*}
\begin{cases}
        \bar{u}^{(m)}_t-dL\bar{u}^{(m)}+q\bar{u}^{(m)}_x+K\bar{u}^{(m)}=K\bar{u}^{(m-1)}+f(t,x,\bar{u}^{(m-1)}),&t\in(0,T_0),x\in{\Omega},\\
        \underline{u}^{(m)}_t-dL\underline{u}^{(m)}+q\underline{u}^{(m)}_x+K\underline{u}^{(m)}=K\underline{u}^{(m-1)}+f(t,x,\underline{u}^{(m-1)}),&t\in(0,T_0),x\in{\Omega},\\
         \bar{u}^{(m)}(t,l_1)=\underline{u}^{(m)}(t,l_1)=0,&t\in(0,T_0),\\
       \bar{u}^{(m)}(0,x)=\underline{u}^{(m)}(0,x)= u_0(x),&x\in{\Omega}
        \end{cases}
 \end{equation*}
 holds, where $Lu=\int_\Omega J(x-y)[u(t,y)-u(t,x)]dy$.
Based on the assumption ${\bf{(F1)}}$ and Theorem \ref{3theorem1}, it follows  that the limits of sequences $\{ \underline{u}^{(m)}\}^\infty_{m=1}$ and $\{\bar{u}^{(m)}\}^\infty_{m=1}$  exist and are unique in $\overline{Q_T}$. Then  we can apply a similar argument as in \cite[Theorem 3.6]{Wang}, combining with monotone dynamical system methods and Maximum principle (Lemma \ref{3lemma211}), to get the theorem as follows:
\begin{theorem}\label{3theorem2}
Assume that $\bf{(J), (J1), (F1)}$ and ${\bf{(F2)}}$ hold. For any given $u_0(x)$ satisfying \eqref{3a_2}, if \eqref{3a_11} has a bounded upper solution $\bar{u}(t,x)$ and a bounded lower solution $\underline{u}(t,x)$, then there exists $T>0$ such that
\eqref{3a_11} admits a unique local classical solution $u(t,x)\in C^1(\overline{Q_T})$.
\end{theorem}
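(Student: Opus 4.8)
The plan is to make rigorous the monotone iteration scheme already set up before the statement. First I would fix a small $T_0>0$ and the constant $K>0$ from \eqref{3a_9}, so that on the order interval $[\tilde b,\tilde B]$ with $\tilde b=\min_{\overline{Q_{T_0}}}\min\{\bar u,\underline u\}$, $\tilde B=\max_{\overline{Q_{T_0}}}\max\{\bar u,\underline u\}$ the map $u\mapsto Ku+f(t,x,u)$ is nondecreasing; this uses the boundedness of $\bar u,\underline u$ together with \textbf{(F1)}. Starting from $\bar u^{(0)}=\bar u$, $\underline u^{(0)}=\underline u$, each iterate $\bar u^{(m)}$ (resp. $\underline u^{(m)}$) is defined as the solution of the \emph{linear} nonlocal advection problem with $c\equiv-K$ and source $g=K\bar u^{(m-1)}+f(\cdot,\cdot,\bar u^{(m-1)})$ (resp. with $\underline u^{(m-1)}$); since $\bar u^{(m-1)}\in C^1(\overline{Q_{T_0}})$ this source lies in $C^1(\overline{Q_{T_0}})$, so Theorem \ref{3theorem1} furnishes a unique classical iterate $\bar u^{(m)}\in C^1(\overline{Q_{T_0}})$, and the whole construction is legitimate by induction.

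Next I would establish, by induction on $m$, the ordering
\[
\underline u^{(0)}\le\underline u^{(1)}\le\cdots\le\underline u^{(m)}\le\cdots\le\bar u^{(m)}\le\cdots\le\bar u^{(1)}\le\bar u^{(0)}\quad\text{on }\overline{Q_{T_0}}.
\]
Each single inequality, after subtracting the two relevant linearized equations, takes the form $w_t\ge dLw-qw_x-Kw$ with $w\ge0$ on $\{t=0\}\cup\{x=l_1\}$, where the monotonicity of $u\mapsto Ku+f$ ensures the source term has the correct sign; Lemma \ref{3lemma211} (with $c\equiv-K$) then gives $w\ge0$. The base case $\underline u\le\underline u^{(1)}$ and $\bar u^{(1)}\le\bar u$ is precisely the definition of lower and upper solution. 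It follows that for each $(t,x)$ the sequences $\bar u^{(m)}(t,x)$ and $\underline u^{(m)}(t,x)$ are monotone and confined to $[\tilde b,\tilde B]$, hence converge pointwise to limits $\bar u^\infty\ge\underline u^\infty$.

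The heart of the matter is upgrading this pointwise convergence to convergence in $C^1(\overline{Q_{T_0}})$, so that $\bar u^\infty$ is a genuine classical solution of \eqref{3a_11}. For this I would derive an a priori bound $\|\bar u^{(m)}\|_{C^1(\overline{Q_{T_0}})}\le C$ uniform in $m$: differentiating the linearized equation in $x$, the quantity $z^{(m)}=\bar u^{(m)}_x$ satisfies a linear nonlocal problem whose coefficients and source are controlled by the already-known sup bound on $\bar u^{(m)}$ (namely $\tilde b\le\bar u^{(m)}\le\tilde B$) and by $\|f\|_{C^1(\overline{Q_{T_0}}\times[\tilde b,\tilde B])}$; running the barrier argument of Lemma \ref{3lemma1} (inequality \eqref{3a_8}) then yields $|\bar u^{(m)}_x|\le R$ with $R$ independent of $m$, and the bound on $\bar u^{(m)}_t$ follows from the equation itself. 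With these uniform bounds, a compactness argument — either Arzelà–Ascoli after noting equi-Hölder continuity, or the Aubin–Lions–Simon Proposition \ref{3proposition4} exactly as in the preceding theorem — extracts a subsequence converging in $C(\overline{Q_{T_0}})$ (and in $x$-derivative); by the monotonicity above the \emph{whole} sequence converges and its limit is the pointwise limit $\bar u^\infty$. Passing to the limit in the weak (or classical) formulation, using the Lipschitz continuity of $f$ and dominated convergence for the nonlocal term, shows $\bar u^\infty$ solves \eqref{3a_11}, and the bootstrap of Remark \ref{3remark2} places it in $C^1$. The same applies to $\underline u^\infty$. Uniqueness is then immediate: any two classical solutions are mutually an upper and a lower solution of one another, so by the comparison principle (combined with Lemma \ref{3lemma211}) each dominates the other, forcing equality.

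The step I expect to be the main obstacle is the uniform-in-$m$ $C^1$ estimate and the attendant compactness. Because the nonlocal operator $L=\int_\Omega J(x-y)[\,\cdot(y)-\cdot(x)\,]dy$ is not smoothing, one cannot invoke parabolic Schauder estimates on the iterates; instead the derivative bound must be propagated by a maximum-principle/barrier argument robust under iteration, the delicate point being that the source $K\bar u^{(m-1)}+f(t,x,\bar u^{(m-1)})$ is only as regular as $\bar u^{(m-1)}$, so the estimate has to be closed self-consistently along the whole sequence rather than term by term. Choosing $T_0$ small enough — depending only on $K$, $d$, $q$, $\|f\|_{C^1}$ on the order interval and $\|u_0\|_{C^1}$, but not on $m$ — is what makes this closure possible; a standard continuation then extends the solution to the maximal interval on which $\bar u$ and $\underline u$ stay bounded, which gives the asserted local classical solution on $\overline{Q_T}$.
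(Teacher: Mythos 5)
Your proposal is correct and follows essentially the same route the paper takes: the monotone iteration scheme built on the linear solvability result (Theorem \ref{3theorem1}), ordering of the iterates via the maximum principle (Lemma \ref{3lemma211}) and the Lipschitz choice of $K$ in \eqref{3a_9}, uniform bounds plus compactness to pass to the limit, and uniqueness by comparison. In fact the paper only sketches this argument by citing the standard upper--lower solution method; your write-up supplies more detail, and correctly identifies the uniform-in-$m$ $C^1$ closure (via a small choice of $T_0$ and control of $\bar u^{(m)}_x(t,l_1)$ from the equation itself) as the step that needs care.
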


We extend the local solution obtained in Theorem \ref{3theorem2} to the maximum time. To
do so, let $[0, T_0)$ be the maximum existence interval and $u(t,x)$ be the solution of \eqref{3a_11} defined on $[0, T_0)$.
\begin{theorem}\label{3theorem3}
Assume that $\bf{(J ),(J1), (F1)}$ and ${\bf{(F2)}}$ hold. For any given $u_0(x)$ satisfying \eqref{3a_2}, problem \eqref{3a_11} admits a unique classical solution $u(t,x)\in C^1([0, \infty )\times \bar{\Omega})$. Moreover,
\begin{align}
0< u(t,x)&\leq \max\{\max_{\bar{\Omega}} u_0(x),N\},\ \  \forall (t,x)\in {Q_\infty};\label{3a_a1}\\
& M_1  \leq u_x(t,x)\leq M_2,\ \ \forall (t,x)\in {Q_\infty};\label{3a_a2}
\end{align}
where $$M_1=\min\{\min_{\overline{\Omega}}u_0(x), \min_{\overline{\Omega}} u_0'(x)\}, M_2=\max\{\max_{\overline{\Omega}}u_0(x), \max_{\overline{\Omega}} u_0'(x)\},$$
and $M_1$ may be a negative constant and $M_2$ is a positive constant.
\end{theorem}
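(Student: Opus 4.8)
The plan is a continuation argument built on time-uniform a priori bounds: produce bounds on $u$ and on $u_x$ that do not depend on the length of the existence interval, observe that the local existence time from Theorem \ref{3theorem2} depends only on such bounds, and conclude that the maximal existence time $T_0$ equals $+\infty$. The bounds are the substantive content. First, for the $L^\infty$ bound I would check that the constant $\overline u\equiv C_0:=\max\{\max_{\bar\Omega}u_0,N\}$ is an upper solution of \eqref{3a_11} and $\underline u\equiv0$ a lower solution: for $\overline u$ the only nontrivial inequality is $0\ge f(t,x,C_0)$, which follows from \textbf{(F2)} when $C_0>N$ and from continuity of $f$ (by \textbf{(F1)}) when $C_0=N$, while $\overline u(t,l_1)=C_0\ge0$ and $\overline u(0,x)=C_0\ge u_0(x)$; for $\underline u$ one uses $f(t,x,0)=0$. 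Since these hold on every interval, the Comparison principle gives $0\le u\le C_0$ on $[0,T_0)\times\bar\Omega$. Positivity follows from Lemma \ref{3lemma211} applied to the linear inequality $u_t\ge d\int_\Omega J(x-y)[u(t,y)-u(t,x)]dy-qu_x+c(t,x)u$ with $c(t,x):=f(t,x,u)/u$ (and $c:=0$ where $u=0$), which lies in $L^\infty$ by the Lipschitz property in \textbf{(F1)} together with $f(t,x,0)=0$; since $u(t,l_1)=0\ge0$ and $u_0\ge,\not\equiv0$, we get $u>0$ on $(0,T_0)\times(l_1,l_2]$, completing \eqref{3a_a1}.

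Next, for the gradient bound I would differentiate \eqref{3a_11} in $x$, using \textbf{(J1)} and integration by parts exactly as in the derivation of \eqref{3a_29}; then $z:=u_x$ satisfies a linear nonlocal transport equation
\[z_t=d\int_\Omega J(x-y)[z(t,y)-z(t,x)]dy-qz_x+f_u(t,x,u)\,z+h(t,x),\]
where $h$ collects $f_x$ together with the endpoint terms produced by differentiating the convolution; since $u(t,l_1)\equiv0$ and $u$ is bounded by $C_0$, $h$ is bounded uniformly in $t$. The boundary value $z(t,l_1)=u_x(t,l_1)$ is determined by evaluating \eqref{3a_11} at $x=l_1$: because $u(t,l_1)\equiv0$ forces $u_t(t,l_1)=0$ and $f(t,l_1,0)=0$, one obtains $u_x(t,l_1)=\frac{d}{q}\int_\Omega J(l_1-y)u(t,y)\,dy\ge0$, again bounded uniformly in $t$. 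With these ingredients an $e^{-Kt}$-weighted maximum-principle argument for $(R-z)e^{-Kt}$ (and symmetrically for the lower bound), carried out as in the proof of Lemma \ref{3lemma1} — in particular handling the outflow endpoint $x=l_2$, where no condition is prescribed, by the Pudelko-type lemma used there, or equivalently by passing to the $\epsilon\to0$ limit in the pointwise estimate \eqref{3a_8} once $u^\epsilon_x(t,l_1)$ is controlled uniformly in $\epsilon$ — yields bounds $M_1\le u_x\le M_2$ on $[0,T_0)\times\bar\Omega$ that are independent of $T_0$, i.e. \eqref{3a_a2}. Because the nonlocal operator gives no smoothing, this step is actually performed on the viscous regularization of Section \ref{3a_part2} and then passed to the limit.

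Finally, by the two preceding paragraphs $\|u(t,\cdot)\|_{L^\infty(\Omega)}$ and $\|u_x(t,\cdot)\|_{L^\infty(\Omega)}$ are bounded on $[0,T_0)$ by constants depending only on the data, and by \textbf{(F1)(F2)} the constant $K$ in \eqref{3a_9} may be fixed uniformly on $[0,C_0]$. Hence restarting \eqref{3a_11} at any $t_1<T_0$ from the datum $u(t_1,\cdot)$ — which satisfies \eqref{3a_2}, being $C^1(\bar\Omega)$, vanishing at $l_1$, and positive on $(l_1,l_2]$ — with the same upper solution $C_0$ and lower solution $0$, Theorem \ref{3theorem2} furnishes an existence time bounded below by some $\tau>0$ independent of $t_1$; the pieces glue in $C^1$ across $t=t_1$ since both one-sided time derivatives equal the common right-hand side of \eqref{3a_11}. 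If $T_0<\infty$, choosing $t_1\in(T_0-\tau,T_0)$ extends the solution past $T_0$, contradicting maximality, so $T_0=\infty$. Uniqueness on each $[0,T]$ is Theorem \ref{3theorem2}, hence on all of $[0,\infty)$. The main obstacle throughout is the time-uniform gradient estimate: one must argue through the regularized problem, make the bound uniform in both the viscosity parameter and in $t$, control the unprescribed endpoint $x=l_2$, and — the decisive point — exploit the condition $u(t,l_1)\equiv0$, which is precisely what pins down $u_x(t,l_1)$ by the data and makes $M_1,M_2$ independent of time.
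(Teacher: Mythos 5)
Your proposal is correct and follows essentially the same route as the paper: constant upper/lower solutions plus the comparison principle for the bound \eqref{3a_a1} and positivity, an exponentially weighted maximum-principle argument on $z=u_x$ (using the equation satisfied by $u_x$ with the bounded remainder $m(t,x)$ from \eqref{3a_29}) for \eqref{3a_a2}, and a restart-and-glue continuation argument to rule out $T_0<\infty$. Your explicit derivation of the boundary value $u_x(t,l_1)=\frac{d}{q}\int_\Omega J(l_1-y)u(t,y)\,dy\ge 0$ from $u(t,l_1)\equiv 0$ is a point the paper uses only implicitly when excluding $x_*=l_1$, and is a welcome clarification rather than a deviation.
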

\begin{proof}
Firstly,  we shall show $T_0=\infty$. Obviously, $\underline{u_1}=0$ and $\bar{u}_1=\max\{\max_{\bar{\Omega}} u_0(x),N\}$ are respectively the lower and upper solutions, so \eqref{3a_11} admits the unique solution $u(t,x)$ and \eqref{3a_a1} holds on $(0,T_0)\times\bar{\Omega}$ by Lemma \ref{3lemma211}.


 Assume $T_0<\infty$. 
Then there is a sequence $\{T_n\}\subset(0,T_0)$ such that $T_n\to T_0$ as $n\to\infty$ and $\|u\|_{C^1(\overline{Q_{T_n}})}\leq \bar{u}_1$. Since $\bar{u}_1$ is independent of $T_n$ and $T_n\to T_0<\infty$, we have also $\|u\|_{C^1(\overline{Q_{T_0}})}\leq \bar{u}_1$. Furthermore, the solution of \eqref{3a_11} exists on $[0, T_0]$.  Regarding $T_n$ and $u(T_n,x)$ as
initial time and initial value. Similar to the proof of Theorem \ref{3theorem2},  we can find a constant $0<t_0\ll1$ such that \eqref{3a_11} has a unique solution $u_n$ in $[T_n, T_n+t_0]$. By the uniqueness, $u_n=u$ in $T_n\leq t<\min\{T_n+t_0,T_0\}$. it allows the solution to be extended beyond $T_0$, contradicting with the maximum existence interval. And so, \eqref{3a_a1} holds on $Q_\infty$.

We next shall prove \eqref{3a_a2}. Note that ${\bf{(F1)}}$ implies that there exists a constant $M>0$ such that
\[|f_x (t, x, u)|,|f_u (t, x,u)|\leq M,\]
and
\[\left|\frac{\partial f (t, x, u)}{\partial x}\right|=|f_x (t, x, u)+f_u (t, x,u)u_x|\leq M(1+|u_x|).\]

Denote $z(t,x)=(M_1-u_x)e^{-N_1t}$ for some positive constant $N_1$ and $z(0,x)=M_1-u_0'(x)\leq0$. We show $z(t,x)\leq0$ in $Q_\infty$ by contradiction. Suppose that we can find $(t_*,x_*)\in {Q_{\infty}}$ such that $z(t_*,x_*)=\sup_{{Q_{\infty}}} z(t,x)>0$. The choice of $M_1$ deduces $t_*>0$, and the bounbary condition means $x_*\in(l_1,l_2]$, $z_x(t_*,x_*)\geq0$ and $z_t(t_*,x_*)=0$. In view of the assumption ${\bf{(J1)}}$, we know  that $z(t_*,x_*)$ satisfies
\begin{align*}
0= z_t(t_*,x_*)=&d\int_\Omega  J(x_*-y)[z(t_*,y)-z(t_*,x_*)]dy-qz_x(t_*,x_*)+(f_u(t_*,x_*,u(t_*,x_*))-N_1)z(t_*,x_*)\\
&-[f_x(t_*,x_*,u(t_*,x_*))+f_u(t_*,x_*,u(t_*,x_*))M_1+m(t_*,x_*)]e^{-N_1t_*}\\
\leq&(f_u(t_*,x_*,u(t_*,x_*))-N_1)z(t_*,x_*)+(|f_x (t, x, u)|+|f_u (t, x, u)|M_1+|m(t_*,x_*)|)e^{-N_1t_*}\\
\leq &(M-N_1)z(t_*,x_*)+(M+MM_1+\|m(t,x)\|_{\infty})e^{-N_1t_*}<0,
\end{align*}
provided that $N_1$ is large enough satisfying
\[(M-N_1)z(t_*,x_*)+(M+MM_1+\|m(t,x)\|_{\infty})e^{-N_1t_*}<0,\]
where $m(t,x)$ is given by \eqref{3a_29}.
This is a contradiction, and then $z(t,x)=(M_1-u_x)e^{-N_1t}\leq0$ on $Q_\infty$. That is to say,
\[ M_1  \leq u_x(t,x),\ \ \forall (t,x)\in {Q_\infty}\]
Similarly, we can define $w(t,x)=(M_2-u_x)e^{-N_2t}$ for some large enough positive constant $N_2$ and $z(0,x)=M_2-u_0'(x)\geq0$ to obtain $w(t,x)\geq0$ on $Q_\infty$. So,
\[ M_2  \geq u_x(t,x),\ \ \forall (t,x)\in {Q_\infty}.\]
The proof is completed.
\end{proof}

 And we can use the same methods to consider the nonlocal Dirichlet problem \eqref{3a_1}.  On the one hand, we can establish the maximum principle for \eqref{3a_1}:
\begin{lemma}[Maximum principle]\label{3lemma2}
For any given $u_0(x)\geq0$ and $T>0$, assume that, for some $c(t,x)\in L^\infty(\overline{Q_T})$, function $u(t,x)\in C^1(\overline{Q_T})$ satisfies
\begin{equation*}
\begin{cases}
        u_t\geq d[\int_\Omega J(x-y)u(t,y)dy-u(t,x)]-qu_x+c(t,x)u,&t\in(0,T),x\in\Omega,\\
         u(t,l_1)\geq0,&t\in(0,T),\\
        u(0,x)=u_0(x)\geq0,&x\in\Omega.
        \end{cases}
 \end{equation*}
Then $u(t,x)\geq0$ in $\overline{Q_T}$. Moreover, $u(t,x)>0$ in $(0,T]\times(l_1,l_2]$ if $u_0(x)\not\equiv0$ in $\bar{\Omega}$.
\end{lemma}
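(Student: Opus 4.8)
The statement is the precise counterpart, for the Dirichlet-type nonlocal operator appearing in \eqref{3a_1}, of the maximum principle already established for the Neumann problem in Lemma \ref{3lemma211}, so the plan is to run that argument again, the only essential difference being the sign of the pure ``loss'' term $-du(t,x)$ at an extremum. First I would reduce to a strictly negative zero-order coefficient: pick a constant $k<0$ with $c(t,x)+k<0$ on $\overline{Q_T}$ and set $w(t,x):=e^{kt}u(t,x)$, which lies in $C^1(\overline{Q_T})$ and satisfies $w_t\ge d\big[\int_\Omega J(x-y)w(t,y)\,dy-w(t,x)\big]-qw_x+(c+k)w$ together with $w(t,l_1)\ge 0$ and $w(0,\cdot)=u_0\ge 0$. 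Suppose, for contradiction, $w_{\inf}:=w(t_0,x_0)=\inf_{\overline{Q_T}}w<0$. Because $w$ is nonnegative at $t=0$ and at $x=l_1$, necessarily $t_0\in(0,T]$ and $x_0\in(l_1,l_2]$, so $w_t(t_0,x_0)\le 0$; moreover $w_x(t_0,x_0)=0$ if $x_0\in(l_1,l_2)$, and $w_x(t_0,l_2)\le 0$ if $x_0=l_2$ (the map $x\mapsto w(t_0,x)$ attaining its minimum over $[l_1,l_2]$ at the right endpoint), so in both cases $-qw_x(t_0,x_0)\ge 0$. For the nonlocal term I would use $w(t_0,y)\ge w_{\inf}$, $w_{\inf}<0$ and the normalization $\int_\Omega J(x_0-y)\,dy\le\int_{\mathbb{R}}J=1$ from \textbf{(J)} to get
\[
d\Big[\int_\Omega J(x_0-y)w(t_0,y)\,dy-w(t_0,x_0)\Big]\ \ge\ d\,w_{\inf}\Big(\int_\Omega J(x_0-y)\,dy-1\Big)\ \ge\ 0.
\]
Substituting these three facts into the differential inequality yields $0\ge w_t(t_0,x_0)\ge (c(t_0,x_0)+k)\,w_{\inf}>0$, a contradiction; hence $w\ge 0$, i.e.\ $u\ge 0$ on $\overline{Q_T}$.

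For the strong positivity I would argue by contradiction as well: suppose $w(t^*,x^*)=0$ at some $(t^*,x^*)\in(0,T]\times(l_1,l_2]$. Since $w\ge 0$ this is a global minimum, so as above $w_t(t^*,x^*)\le 0$ and $-qw_x(t^*,x^*)\ge 0$, and using $w(t^*,x^*)=0$ the differential inequality collapses to $0\ge d\int_\Omega J(x^*-y)w(t^*,y)\,dy-qw_x(t^*,x^*)$, whose right-hand side is a sum of two nonnegative terms; hence $\int_\Omega J(x^*-y)w(t^*,y)\,dy=0$. As $J$ is continuous with $J(0)>0$, it is positive on some $(-\delta,\delta)$, so together with $w(t^*,\cdot)\ge 0$ this forces $w(t^*,\cdot)\equiv 0$ on $(x^*-\delta,x^*+\delta)\cap\bar\Omega$. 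The same reasoning applies verbatim at every zero of $w(t^*,\cdot)$ lying in $(l_1,l_2]$, so the closed set $\{x\in\bar\Omega:w(t^*,x)=0\}$ is also relatively open in the connected interval $(l_1,l_2]$; being nonempty it is all of $(l_1,l_2]$, and by continuity $w(t^*,\cdot)\equiv 0$ on $\bar\Omega$. I would then finish exactly as in Lemma \ref{3lemma211}: for fixed $(t,x)$, the function $z(s):=w(t+s,x+qs)$ satisfies, since $w\ge 0$ makes $\int_\Omega J(x+qs-y)w(t+s,y)\,dy\ge 0$,
\[
z'(s)=w_t+qw_x\ \ge\ \inf_{\overline{Q_T}}[k+c-d]\,z(s)=:\theta\,z(s),
\]
so $e^{-\theta s}z(s)$ is nondecreasing; sliding this backwards along the characteristics from the line $t=t^*$, on which $w$ vanishes identically, down to $t=0$ forces $w\equiv 0$ on $[0,t^*]\times\bar\Omega$, hence $u_0\equiv 0$, contradicting \eqref{3a_2}. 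This gives $u>0$ on $(0,T]\times(l_1,l_2]$.

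The only step I expect to require genuine care is the sign of the Dirichlet loss term at the negative minimum: unlike the Neumann operator $d\int_\Omega J(x-y)[w(t,y)-w(t,x)]\,dy$, which is automatically nonnegative there, the quantity $d\big[\int_\Omega J(x_0-y)w(t_0,y)\,dy-w(t_0,x_0)\big]$ is nonnegative \emph{only} because the mass of $J$ over $\Omega$ is at most $1$ while $w_{\inf}<0$, so the normalization in \textbf{(J)} is really used here whereas it was not in the Neumann case. A secondary bookkeeping point, handled exactly as in Lemma \ref{3lemma211}, is that the right endpoint $l_2$ carries no boundary condition; this is harmless because a minimum located at $x_0=l_2$ still yields $w_x(t_0,l_2)\le 0$ and hence the favourable sign $-qw_x(t_0,l_2)\ge 0$. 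Everything else is the word-for-word adaptation of the Neumann argument.
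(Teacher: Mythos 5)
Your proposal is correct and is essentially the argument the paper intends: the paper states this lemma without proof, remarking only that it follows by ``the same methods'' as the Neumann case (Lemma \ref{3lemma211}), and your adaptation carries that out faithfully, including the contradiction at a negative infimum, the propagation of the zero set via $J(0)>0$, and the characteristic function $z(s)=w(t+s,x+qs)$. You also correctly isolate the one point where the Dirichlet operator genuinely differs — the loss term $-dw(t,x_0)$ at a negative minimum is controlled only because $\int_\Omega J(x_0-y)\,dy\le 1$ — which is exactly the observation needed to transfer the Neumann proof.
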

 On the other hand, the well-posedness of solutions of \eqref{3a_1} is obtained as follows:
 \begin{theorem}\label{3theorem4}
Assume that $\bf{(J ),(J1), (F1)}$ and ${\bf{(F2)}}$ hold. For any given $u_0(x)$ satisfying \eqref{3a_2},
\eqref{3a_1} admits a unique classical solution $u(t,x)\in C^1([0, \infty )\times \bar{\Omega})$ satisfying \eqref{3a_a1} and \eqref{3a_a2}.
\end{theorem}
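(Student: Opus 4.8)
The proof will be a line-by-line transcription of the programme already carried out for the Neumann problem \eqref{3a_11} --- Lemma \ref{3lemma1}, the weak-solution existence theorem for \eqref{3a_3}, Remark \ref{3remark2}, Theorem \ref{3theorem1}, the comparison principle, and Theorems \ref{3theorem2}--\ref{3theorem3} --- with the nonlocal operator $d\int_\Omega J(x-y)[u(t,y)-u(t,x)]\,dy$ replaced throughout by $d\bigl[\int_\Omega J(x-y)u(t,y)\,dy-u(t,x)\bigr]$. I will only point out the handful of places where this substitution forces a change; each is routine, the key algebraic fact being $\int_\Omega J(x-y)\,dy\le 1$, which plays the role that the antisymmetry of the Neumann kernel played before.

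First I would treat the linear Dirichlet analogue of \eqref{3a_3}, regularize it by adding $\epsilon u^\epsilon_{xx}$ together with the mixed boundary condition $u^\epsilon(t,l_1)=u^\epsilon_x(t,l_2)=0$, and rerun the Banach fixed-point argument of Lemma \ref{3lemma1}; the contraction estimate sees only the $L^2\to L^2$ and $H^1\to H^1$ boundedness of the nonlocal operator, which both operators share. In the $L^\infty$ barrier step, at a positive maximum of $w^1=e^{-Kt}(u^\epsilon-M)$ the inequality used in Lemma \ref{3lemma1} is replaced by
\[
d\Bigl[\int_\Omega J(x_*-y)w^1(t_*,y)\,dy-w^1(t_*,x_*)\Bigr]\le d\,w^1(t_*,x_*)\Bigl(\int_\Omega J(x_*-y)\,dy-1\Bigr)\le 0,
\]
and symmetrically at a negative minimum; in the gradient barrier step the inhomogeneity \eqref{3a_29} is replaced, after one integration by parts using $u^\epsilon(t,l_1)=0$ and {\bf(J1)}, by the uniformly bounded function $\widetilde m(t,x)=dJ(x-l_2)u^\epsilon(t,l_2)$. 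Passing $\epsilon\to0$ via Proposition \ref{3proposition4} and the Lebesgue dominated convergence theorem produces a weak, hence (by Remark \ref{3remark2}) classical, solution of the linear Dirichlet problem, and its uniqueness follows from the already-established maximum principle Lemma \ref{3lemma2} exactly as Theorem \ref{3theorem1} followed from Lemma \ref{3lemma211}.

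With the linear theory in place, the nonlinear problem \eqref{3a_1} is solved by the monotone iteration of Theorem \ref{3theorem2} from an ordered bounded sub/supersolution pair and then continued to $[0,\infty)$ as in Theorem \ref{3theorem3}; choosing $\underline u\equiv0$ and $\bar u\equiv\max\{\max_{\bar\Omega}u_0,N\}$, which are global sub/supersolutions since $\underline u_t=0=f(t,x,0)$ and $\bar u_t=0\ge d\bar u\bigl(\int_\Omega J-1\bigr)+f(t,x,\bar u)$ by $\int_\Omega J\le1$ and {\bf(F2)}, the comparison principle yields \eqref{3a_a1} on the maximal interval, which excludes finite-time blow-up and gives the global $C^1$ solution. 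The one genuinely delicate point is \eqref{3a_a2}: differentiating \eqref{3a_1} in $x$ and integrating by parts shows that $z=u_x$ satisfies
\[
z_t=d\Bigl[\int_\Omega J(x-y)z(t,y)\,dy-z(t,x)\Bigr]-qz_x+f_u(t,x,u)\,z+f_x(t,x,u)-dJ(x-l_2)u(t,l_2),
\]
whose inhomogeneity is uniformly bounded by \eqref{3a_a1}, $\sup J<\infty$ and {\bf(F1)}; running the weighted barrier argument of Theorem \ref{3theorem3} with $(M_1-u_x)e^{-N_1t}$ and then $(M_2-u_x)e^{-N_2t}$, and at the extremal point absorbing the $u_x$-dependent part of $\partial_xf$ (bounded by $M(1+|u_x|)$) into the damping term $(M-N_i)z$ by taking $N_i$ large, delivers \eqref{3a_a2}. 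I expect this last estimate to be the main obstacle, since one must simultaneously control the boundary term $dJ(x-l_2)u(t,l_2)$ produced by differentiating the convolution and the self-referential bound on $\partial_xf$, both of which only become harmless once the uniform $L^\infty$ bound \eqref{3a_a1} has been secured.
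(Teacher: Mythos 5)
Your proposal is correct and follows exactly the route the paper intends: the paper gives no separate proof of Theorem \ref{3theorem4}, stating only that the same methods as for the Neumann problem \eqref{3a_11} apply. The modifications you single out --- using $\int_\Omega J(x-y)\,dy\le 1$ at extremal points in place of the sign of $\int_\Omega J(x-y)[w(y)-w(x)]\,dy$, the boundary term $dJ(x-l_2)u(t,l_2)$ replacing \eqref{3a_29} after integrating by parts with $u(t,l_1)=0$, and the constant sub/supersolution pair --- are precisely the points where the Dirichlet operator differs, so your write-up in fact supplies the details the paper leaves implicit.
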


\section{Nontrivial stationary solution \label{3a_part3}}
\setcounter{equation}{0}
In this section, we focus on the specific reaction term.  Assume further that  $f(t,x,u)=uh(x,u)$  satisfies the following conditions:
\begin{enumerate}[(1)]
 \item  Function $h(x,u)\in C^{1}(\bar{\Omega}\times\mathbb{R}^+)$ is decreasing in $u$ on $\mathbb{R}^+$;
 \item Function $h(x,0)$ may be sign changed.
\end{enumerate}
 Then \eqref{3a_11} and \eqref{3a_1} become a nonlocal reaction-diffusion-advection Fisher-KPP equation.  For given  initial value $u_0(x)$, it follows that  \eqref{3a_11} (or \eqref{3a_1}) admits a unique solution $u(t,x;u_0)$ satisfying $u(t,x;u_0)>0$ for all $t>0$.

Moreover, if the nontrivial stationary $U(x)$ of \eqref{3a_11} (or \eqref{3a_1}) exists, then
we say that  $U(x)$ is stable: if for any $\epsilon>0$, there is $\delta>0$ such that when $\|u_0-U(x)\|_\infty<\delta$,  we have $\|u(t,x;u_0)-U(x)\|_\infty<\epsilon$ for all $t>0$. $U(x)$ is unstable if this not holds. $U(x)$ is asymptotically stable if for any $u_0(x)$, we have $\|u(t,x;u_0)-U(x)\|_\infty\to0$ as $t\to\infty$.

\subsection{Spectral theory of integro-differential operator}\label{3subsection1}
The eigenvalue problem is an important tool to study the longtime behaviors of elliptic and parabolic problems, especially the sign of principal eigenvalue of the linearized operator can determine  the existence and stability of nontrivial stationary solution, even the persistence of species, see \cite{Berestycki2016,DeLeenheer,Covill2010,Shen2015}.

Now, we recall some known results on the principal eigenvalue of the linear nonlocal operator $\mathfrak{L}_{\Omega,J,q}+a$, 
which is defined on $C^1(\bar{\Omega})$ by
\[(\mathfrak{L}_{\Omega,J,q}+a)[\varphi(x)]:=d\int_\Omega J(x-y)\varphi(y)dy-q\varphi'(x)+a(x)\varphi(x),\]
where $a(x)\in C(\bar{\Omega})\cap L^\infty(\Omega)$.
Consider the following eigenvalue problem:
\begin{equation}\label{3a_12}
d\int_\Omega J(x-y)\varphi(y)dy-q\varphi'(x)+a(x)\varphi(x)+\lambda\varphi(x)=0,\ \ \ x\in\Omega.
\end{equation}
The eigenvalue is called the principal eigenvalue if there is a continuous and positive eigenfunction such that \eqref{3a_12} holds. However, due to the lack of regularity and compactness of nonlocal dispersal operator, \eqref{3a_12} may not admit principal eigenvalue. In \cite{DeLeenheer}, when $\Omega=\mathbb{R}$ and $a(x)$ is a periodic function, the Krein-Rutman Theory can be used to show the existence of principal eigenvalue of \eqref{3a_12}. 
As shown in \cite{Coville2020,Coville2021}, if the zero order term $a(x)$ is just assumed bounded, the generalized principal eigenvalue can be defined to characterise the principal eigenvalue of operator $\mathfrak{L}_{\Omega,J,q}+a$:
\begin{equation}\label{3a_13}
\begin{aligned}
\lambda_p(\mathfrak{L}_{\Omega,J,q}+a):=\sup\{\lambda\in\mathbb{R}|\exists \varphi\in &C^1(\Omega)\cap C(\bar{\Omega}) \ \mbox{satisfying} \ \varphi>0 \ \mbox{in} \ \Omega \  \mbox{and}
\\
&\mathfrak{L}_{\Omega,J,q}[\varphi(x)]+a(x)\varphi(x)+\lambda\varphi(x)\leq0\}.
\end{aligned}
\end{equation}
When $q = 0$, the generalized principal eigenvalue can be defined as
\begin{equation*}
\begin{aligned}
\lambda_p(\mathfrak{L}_{\Omega,J}+a):=\sup\{\lambda\in\mathbb{R}|\exists \varphi\in & C(\bar{\Omega}) \ \mbox{satisfying} \ \varphi>0 \ \mbox{in} \ \Omega \  \mbox{and}
\\
&\mathfrak{L}_{\Omega,J}[\varphi(x)]+a(x)\varphi(x)+\lambda\varphi(x)\leq0\}.
\end{aligned}
\end{equation*}
And some standard properties have shown in \cite{Berestycki2016}.

\begin{proposition}\label{3proposition1}
Assume that $\bf{(J )}$  holds. Let $\Omega\subset\mathbb{R}$ be a non-empty open interval, possibly unbounded. For $q\neq0$, the following conclusions are valid:
\begin{enumerate}[(1)]
 \item If $\Omega_1\subset\Omega_2$ are non-empty open domains, then $\lambda_p(\mathfrak{L}_{\Omega_1,J,q}+a)\geq\lambda_p(\mathfrak{L}_{\Omega_2,J,q}+a)$.
 \item If $a_1(x),a_2(x)\in C(\bar{\Omega})\cap L^\infty(\Omega)$ satisfying $a_1(x)\leq a_2(x)$, then $$\lambda_p(\mathfrak{L}_{\Omega,J,q}+a_1)\geq\lambda_p(\mathfrak{L}_{\Omega,J,q}+a_2).$$
 \item If $J$ further has a compact support and satisfies {\bf{(J1)}}, then
 $$\lambda_p(\mathfrak{L}_{\Omega,J,q}+a)=\lambda_p(\mathfrak{L}_{\Omega,J,-q}+a).$$
 \item If $\Omega$ is bounded, then $\lambda_p(\mathfrak{L}_{\Omega,J,q}+a)$ is continuous in $q\in(-\infty,0)\cup(0,+\infty)$.
\end{enumerate}
\end{proposition}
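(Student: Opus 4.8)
The plan is to handle the four assertions separately: (1) and (2) follow immediately from the variational characterization \eqref{3a_13}, (3) rests on an adjoint/reflection symmetry for which the compact support of $J$ is essential, and (4) is a semicontinuity argument in $q$.

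For (1), fix any $\lambda$ admissible in the supremum defining $\lambda_p(\mathfrak{L}_{\Omega_2,J,q}+a)$, with witness $\varphi\in C^1(\Omega_2)\cap C(\bar{\Omega}_2)$, $\varphi>0$ in $\Omega_2$, and $\mathfrak{L}_{\Omega_2,J,q}[\varphi]+a\varphi+\lambda\varphi\le 0$ in $\Omega_2$. Because $\varphi>0$ and $J\ge 0$, dropping the portion of the convolution integral over $\Omega_2\setminus\Omega_1$ can only decrease the left-hand side, so $\varphi|_{\Omega_1}$ (which lies in $C^1(\Omega_1)\cap C(\bar{\Omega}_1)$ and is positive in $\Omega_1$) satisfies $\mathfrak{L}_{\Omega_1,J,q}[\varphi]+a\varphi+\lambda\varphi\le 0$ in $\Omega_1$; hence $\lambda$ is also admissible for $\lambda_p(\mathfrak{L}_{\Omega_1,J,q}+a)$, and taking the supremum yields $\lambda_p(\mathfrak{L}_{\Omega_1,J,q}+a)\ge\lambda_p(\mathfrak{L}_{\Omega_2,J,q}+a)$. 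Item (2) is the same argument on the fixed domain $\Omega$: if $a_1\le a_2$ and $\varphi>0$, then $a_1\varphi\le a_2\varphi$, so every pair $(\lambda,\varphi)$ admissible for $a_2$ is admissible for $a_1$.

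For (3), observe that when $J$ is symmetric the operator $\mathfrak{L}_{\Omega,J,-q}+a$ is the formal $L^2(\Omega)$-adjoint of $\mathfrak{L}_{\Omega,J,q}+a$: the convolution term $\varphi\mapsto d\int_\Omega J(x-y)\varphi(y)\,dy$ is self-adjoint since $J(z)=J(-z)$, multiplication by $a$ is self-adjoint, and the transport part $-q\partial_x$ has formal adjoint $q\partial_x=-(-q)\partial_x$ (the boundary terms produced by the integration by parts being absorbed in the functional-analytic set-up of the cited works). The hypothesis that $J$ has compact support, together with $\sup J<\infty$, is exactly what supplies the positivity-preserving, regularizing structure under which the generalized principal eigenvalue \eqref{3a_13} coincides with a genuine principal eigenvalue possessing a positive eigenfunction; since an operator and its adjoint share their spectral bound, $\lambda_p(\mathfrak{L}_{\Omega,J,q}+a)=\lambda_p(\mathfrak{L}_{\Omega,J,-q}+a)$. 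One may instead argue directly from the definition: reflecting a competitor through the midpoint of $\Omega$ transfers the convolution and transport parts between the $q$- and $(-q)$-problems exactly (here $J(z)=J(-z)$ is used), and compact support of $J$ is what makes exponential weights $e^{\pm kx}$ admissible competitors, their symbol $\widehat J(k)=\int_{\mathbb{R}} J(z)e^{-kz}\,dz$ being even in $k$. I expect this item to be the main obstacle, since matching the zero-order coefficient $a$ under the reflection is delicate and forces the appeal to the duality/Krein--Rutman framework of \cite{Coville2020,Coville2021,DeLeenheer}.

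For (4), with $\Omega$ bounded, write for a fixed positive test function $\varphi$
\[\lambda_\varphi(q):=-\sup_{x\in\Omega}\frac{d\int_\Omega J(x-y)\varphi(y)\,dy-q\varphi'(x)+a(x)\varphi(x)}{\varphi(x)},\qquad \lambda_p(\mathfrak{L}_{\Omega,J,q}+a)=\sup_{\varphi}\lambda_\varphi(q).\]
Whenever $\inf_\Omega\varphi>0$ and $\varphi\in C^1(\bar{\Omega})$, the ratio $\varphi'/\varphi$ is bounded, so $q\mapsto\lambda_\varphi(q)$ is Lipschitz with constant $\|\varphi'/\varphi\|_{L^\infty(\Omega)}$; being a supremum of continuous functions, $q\mapsto\lambda_p(\mathfrak{L}_{\Omega,J,q}+a)$ is therefore lower semicontinuous. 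For upper semicontinuity, one first reduces to near-optimal competitors lying in $C^1(\bar{\Omega})$ with $\inf_\Omega\varphi>0$ (a mollification-and-truncation step that uses boundedness of $\Omega$ and $q\neq 0$, so that the transport term is nondegenerate near $\partial\Omega$), and then estimates $\lambda_\varphi(q)\ge\lambda_\varphi(q_0)-|q-q_0|\,\|\varphi'/\varphi\|_{L^\infty(\Omega)}$ uniformly over this class to obtain $\limsup_{q\to q_0}\lambda_p(\mathfrak{L}_{\Omega,J,q}+a)\le\lambda_p(\mathfrak{L}_{\Omega,J,q_0}+a)$. Combining the two semicontinuities gives continuity on $(-\infty,0)\cup(0,+\infty)$; the regularity reduction of test functions near $\partial\Omega$ is the technical heart here, which is why $q=0$ must be excluded.
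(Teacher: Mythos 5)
The paper gives no proof of this proposition at all: it simply refers to \cite[Proposition 1.3]{Coville2020}, \cite[Proposition 3.4]{Coville2021} and \cite[Theorem 6.2]{DeLeenheer} for items (1)--(4). Against that baseline, your arguments for (1) and (2) are correct, complete, and are exactly the standard arguments from those references: a competitor $(\lambda,\varphi)$ for the larger domain (resp.\ the larger coefficient $a_2$) remains a competitor for the smaller domain (resp.\ for $a_1$), because dropping part of the nonnegative convolution integral, or replacing $a_2\varphi$ by $a_1\varphi$ with $\varphi>0$, only decreases the left-hand side of the inequality in \eqref{3a_13}. Writing these out is an improvement on the bare citation.

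Items (3) and (4) are not actually proved by your sketch. For (3), the assertion that ``an operator and its adjoint share their spectral bound'' does not settle the matter, because $\lambda_p$ as defined in \eqref{3a_13} is not a priori a spectral bound; identifying the supremum-type quantity for $\mathfrak{L}_{\Omega,J,q}+a$ with the corresponding quantity for $\mathfrak{L}_{\Omega,J,-q}+a$ (equivalently, with the dual characterization $\lambda_p'$ of Proposition \ref{3proposition2}) is precisely the nontrivial content of the cited results, and your argument defers to them at exactly that point. Your fallback reflection $x\mapsto l_1+l_2-x$ fails for the reason you yourself flag: it transforms the convolution and the drift correctly (using $J(z)=J(-z)$) but turns the zero-order coefficient into $a(l_1+l_2-x)$ rather than $a(x)$, so it only proves (3) when $a$ is symmetric about the midpoint. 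For (4), two steps are missing. First, each map $q\mapsto\lambda_\varphi(q)=\inf_{x\in\Omega}\bigl(-\tfrac{(\mathfrak{L}_{\Omega,J,q}+a)[\varphi](x)}{\varphi(x)}\bigr)$ is an infimum of affine functions of $q$ and is therefore upper, not lower, semicontinuous; to conclude lower semicontinuity of the supremum you must show that restricting to competitors with $\varphi'/\varphi$ bounded does not change the supremum, and this is not addressed. Second, the upper-semicontinuity step needs the Lipschitz constant $\|\varphi'/\varphi\|_{L^\infty(\Omega)}$ to be uniform over near-optimal competitors, whereas by Proposition \ref{3proposition2} the optimal competitor vanishes at one endpoint for $q\neq0$, so near-optimal competitors necessarily have $\varphi'/\varphi$ unbounded there; the obvious repair of adding a small constant $\delta>0$ to $\varphi$ perturbs the Rayleigh-type quotient by an amount of order one near that endpoint and destroys near-optimality. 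Closing either gap essentially amounts to reproducing the arguments of \cite{Coville2020,DeLeenheer}, which is what the paper implicitly relies on; as written, your (3) and (4) remain citations rather than proofs.
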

\begin{proof}
We refer to \cite[Proposition 1.3]{Coville2020}, \cite[Proposition 3.4]{Coville2021} and \cite[Theorem 6.2]{DeLeenheer} for the proofs of (1)-(4) when $q\neq0$.
\end{proof}

\begin{proposition}[\cite{Coville2020}, Theorem 1.2]\label{3proposition2}
Assume that $\bf{(J )}$  holds. Let $\Omega=(l_1,l_2)$ be a non-empty bounded open interval. For $q\neq0$, we have
\[\lambda_p(\mathfrak{L}_{\Omega,J,q}+a)=\lambda_p'(\mathfrak{L}_{\Omega,J,q}+a).\]
If $q<0$, then
\begin{align*}
\lambda_p'(\mathfrak{L}_{\Omega,J,q}+a):=\inf\{\lambda\in\mathbb{R}|\exists \varphi\in &C^1(\Omega)\cap C(\bar{\Omega}) \ \mbox{satisfying} \ \varphi>0 \ \mbox{in} \ \Omega,\varphi(l_2)=0,\varphi(l_1)>0, \\  &\mbox{and} \
\mathfrak{L}_{\Omega,J,q}[\varphi(x)]+a(x)\varphi(x)+\lambda\varphi(x)\geq0\}.
\end{align*}
If $q>0$, then
\begin{align*}
\lambda_p'(\mathfrak{L}_{\Omega,J,q}+a):=\inf\{\lambda\in\mathbb{R}|\exists \varphi\in &C^1(\Omega)\cap C(\bar{\Omega}) \ \mbox{satisfying} \ \varphi>0 \ \mbox{in} \ \Omega,\varphi(l_1)=0,\varphi(l_2)>0, \\  &\mbox{and} \
\mathfrak{L}_{\Omega,J,q}[\varphi(x)]+a(x)\varphi(x)+\lambda\varphi(x)\geq0\}.
\end{align*}
Moreover,  the infimum can be achieved. That is to say, there is a function $\varphi\in C^1(\Omega)\cap C(\bar{\Omega})$ such that \eqref{3a_12} holds and $\varphi>0$  in $\Omega$  satisfying  $\varphi(l_2)=0$ for $q<0$ and $\varphi(l_1)=0$ for $q>0$.
\end{proposition}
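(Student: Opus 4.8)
The plan is to prove everything for $q>0$ and to recover the case $q<0$ via the reflection $x\mapsto l_1+l_2-x$, under which $\mathfrak{L}_{\Omega,J,q}+a$ becomes an operator of the same type with kernel $\widetilde J(\cdot)=J(-\cdot)$ (which still satisfies {\bf (J)}), potential $a(l_1+l_2-\cdot)$, advection $-q$, and with the roles of $l_1$ and $l_2$ exchanged; the $q<0$ statement thus follows from the $q>0$ one. The crux, as the statement itself suggests, is to produce a \emph{bona fide} principal eigenpair for $\mathfrak{L}_{\Omega,J,q}+a$ that already carries the inflow boundary condition $\varphi(l_1)=0$; once such a pair is in hand, the identity $\lambda_p=\lambda_p'$ and the attainment of the infimum follow quickly from the definitions, and only the inequality $\lambda_p\le\lambda_p'$ needs a genuine comparison argument.

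For the eigenpair, fix $\lambda\in\mathbb{R}$ and note that, since $q>0$, a positive solution of \eqref{3a_12} with $\varphi(l_1)=0$ is exactly a fixed point of the operator
\[
T_\lambda\varphi(x):=\frac{d}{q}\int_{l_1}^{x}\exp\!\Big(\tfrac1q\!\int_{s}^{x}\!\big(a(\tau)+\lambda\big)\,d\tau\Big)\Big(\int_\Omega J(s-y)\varphi(y)\,dy\Big)ds ,
\]
obtained by solving the transport equation forward from the inflow endpoint $l_1$ with the nonlocal term as source. This $T_\lambda$ is positive; it maps $C(\bar\Omega)$ boundedly into $C^1(\bar\Omega)$ (the $x$-integral supplies the missing derivative, precisely the regularity the nonlocal diffusion fails to provide), hence is compact; and because $J(0)>0$ one obtains $r(T_\lambda)>0$ for every $\lambda$. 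Moreover $\lambda\mapsto r(T_\lambda)$ is continuous and strictly increasing (raising $\lambda$ strictly enlarges the exponential weight, since $x-s>0$ throughout), with $r(T_\lambda)\to0$ as $\lambda\to-\infty$ and $r(T_\lambda)\to+\infty$ as $\lambda\to+\infty$. Hence there is a unique $\lambda_1$ with $r(T_{\lambda_1})=1$, and the Krein--Rutman theorem furnishes $\varphi_1\ge0$, $\varphi_1\not\equiv0$, with $T_{\lambda_1}\varphi_1=\varphi_1$. Unwinding the fixed-point relation shows that $\varphi_1$ solves \eqref{3a_12} with eigenvalue $\lambda_1$ and $\varphi_1(l_1)=0$; reading off the equation gives $\varphi_1\in C^1(\bar\Omega)$ and $\varphi_1>0$ on $(l_1,l_2]$, so in particular $\varphi_1(l_2)>0$.

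Now $\varphi_1$ is admissible in the supremum defining $\lambda_p$ in \eqref{3a_13} (it is a positive solution, hence a supersolution), so $\lambda_p\ge\lambda_1$; and since $\varphi_1(l_1)=0$, $\varphi_1(l_2)>0$ and the equality entails the required $\ge0$ inequality, $\varphi_1$ is admissible in the infimum defining $\lambda_p'$ (the $q>0$ branch), so $\lambda_p'\le\lambda_1$ and the infimum is attained by $\varphi_1$. It remains to check $\lambda_p\le\lambda_p'$, i.e.\ that $\lambda\le\mu$ whenever $\varphi>0$ in $\Omega$ satisfies $\mathfrak{L}_{\Omega,J,q}[\varphi]+(a+\lambda)\varphi\le0$ and $\psi>0$ in $\Omega$ with $\psi(l_1)=0$, $\psi(l_2)>0$ satisfies $\mathfrak{L}_{\Omega,J,q}[\psi]+(a+\mu)\psi\ge0$. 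Set $v:=\psi/\varphi$; using the supersolution inequality one checks that $\varphi(l_2)>0$, so $v$ extends to a continuous function on $\bar\Omega$ which, because $\psi(l_1)=0$ and $\varphi$ is positive inside, attains its maximum at some $x_0\in(l_1,l_2]$ --- this is precisely where the one-sided boundary condition enters. Writing $\psi=v\varphi$, substituting into the two inequalities and subtracting (using $v>0$) yields
\[
d\int_\Omega J(x-y)\,[v(y)-v(x)]\,\varphi(y)\,dy-q\,v'(x)\,\varphi(x)+(\mu-\lambda)\,v(x)\,\varphi(x)\;\ge\;0,\qquad x\in\Omega .
\]
At $x_0$ one has $v'(x_0)=0$ (interior max) or $v'(x_0)\ge0$ ($x_0=l_2$), so $-qv'(x_0)\varphi(x_0)\le0$, while the nonlocal term is $\le0$ because $v\le v(x_0)$; hence $(\mu-\lambda)v(x_0)\varphi(x_0)\ge0$, and since $v(x_0),\varphi(x_0)>0$ we get $\mu\ge\lambda$. (The degenerate case $\varphi(l_1)=0$, where $v$ could be maximized at $l_1$, is absorbed by the observation that $v'(x)\varphi(x)\to0$ as $x\to l_1^+$, which forces $v$ to be locally constant near $l_1$ and hence maximized at an interior point as well.) Taking suprema and infima gives $\lambda_p\le\lambda_p'$, and together with the sandwich above, $\lambda_p=\lambda_p'=\lambda_1$, attained by $\varphi_1$.

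The step I expect to be the real obstacle is the construction of $(\lambda_1,\varphi_1)$: unlike second-order problems, the nonlocal diffusion provides no gain of compactness or regularity, so everything must be extracted from the transport term --- it is the forward $x$-integration that both supplies the one-derivative gain needed for compactness of $T_\lambda$ and makes the Dirichlet condition at the inflow endpoint $l_1$ the natural one. Carefully verifying the Krein--Rutman hypotheses for $T_\lambda$ (positivity, compactness, $r(T_\lambda)>0$ --- where the assumption $J(0)>0$ is indispensable --- and simplicity of the leading eigenvalue, needed for the strict monotonicity and hence uniqueness of $\lambda_1$) is the core work; the comparison argument in the last step, by contrast, is largely a matter of choosing the right ratio $\psi/\varphi$ so that its maximum is pushed away from the degenerate boundary point.
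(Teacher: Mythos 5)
First, a point of reference: the paper does not prove this proposition at all --- it is imported verbatim from Coville--Hamel \cite{Coville2020} (Theorem 1.2 there), so your blind reconstruction is being measured against a citation rather than an argument. That said, your architecture is the right one and is essentially the route of the cited source: solve the transport part forward from the inflow endpoint $l_1$ so that \eqref{3a_12} becomes a fixed-point problem for the compact positive Volterra-type operator $T_\lambda$, locate $\lambda_1$ with $r(T_{\lambda_1})=1$, extract an eigenpair by Krein--Rutman, and sandwich $\lambda_p$ and $\lambda_p'$ around $\lambda_1$ via a ratio comparison; the reduction of $q<0$ to $q>0$ by reflection is also fine.

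Two steps as written do not hold up. The serious one is the parenthetical disposing of the degenerate case $\varphi(l_1)=0$ in the comparison argument: the assertion that $v'(x)\varphi(x)\to0$ as $x\to l_1^+$ is unjustified (each of the two differential inequalities bounds $\varphi'$, resp.\ $\psi'$, from one side only, so neither derivative need have a limit at $l_1$), and even if it held it would not force $v$ to be locally constant. The correct exclusion is quantitative: the subsolution inequality gives $\psi'\le C$, hence $\psi(x)\le C(x-l_1)$, while the supersolution inequality gives, when $\varphi(l_1)=0$, $\liminf_{x\to l_1^+}\varphi(x)/(x-l_1)\ge \tfrac dq\int_\Omega J(l_1-y)\varphi(y)\,dy>0$; combining these, $\limsup_{x\to l_1^+}v(x)\le \int_\Omega J(l_1-y)v(y)\varphi(y)\,dy\big/\int_\Omega J(l_1-y)\varphi(y)\,dy$, which is strictly below $\sup v$ unless $v$ already attains its supremum at some point of $(l_1,l_2]$ --- and that is exactly what you need. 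The second soft spot is the claim $r(T_\lambda)\to+\infty$: every $\lambda$-independent test function fails because the ratio $T_\lambda u/u$ degenerates at the left edge of the support of $u$, and one must use a $\lambda$-dependent comparison function such as $u_\lambda(x)=(x-l_1)e^{\lambda(x-l_1)/(2q)}$, for which $\int_\Omega J(\cdot-y)u_\lambda(y)\,dy$ is bounded below near $l_1$ by a constant of order $e^{\lambda\delta/(2q)}$, yielding $T_\lambda u_\lambda\ge c\,e^{\lambda\delta/(4q)}u_\lambda$ and hence the blow-up. (Strict monotonicity and uniqueness of $\lambda_1$ are not needed --- continuity of $r(T_\lambda)$ plus the two limits suffice --- and the evaluation of the inequality at $x_0=l_2$ requires a limiting argument in the style of the proof of Lemma \ref{3lemma1}, since the inequality is only assumed on the open interval.) With these repairs your proof is complete.
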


\begin{proposition}\label{3proposition3}
 Assume that $a(x)\in C(\bar{\Omega})\cap L^\infty(\Omega)$ is nonnegative, $\Omega=(l_1,l_2)$ is a non-empty bounded open interval and $\bf{(J), (J1)}$  holds.  If further $J$ has a compact support, then for fixed $\Omega$, $$\lambda_p(\mathfrak{L}_{\Omega,J,q_1}+a)<\lambda_p(\mathfrak{L}_{\Omega,J,q_2}+a), \ \ 0<q_1< q_2;$$
      $$\lambda_p(\mathfrak{L}_{\Omega,J,q_1}+a)>\lambda_p(\mathfrak{L}_{\Omega,J,q_2}+a), \ \ q_1< q_2<0.$$
\end{proposition}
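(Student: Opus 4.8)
The plan is to reduce to the case $0<q_1<q_2$ — the case $q_1<q_2<0$ is then immediate, since $0<-q_2<-q_1$ and Proposition~\ref{3proposition1}(3) (valid here because $J$ is symmetric with compact support) gives $\lambda_p(\mathfrak{L}_{\Omega,J,q_i}+a)=\lambda_p(\mathfrak{L}_{\Omega,J,-q_i}+a)$ — and then to compare $\lambda_1:=\lambda_p(\mathfrak{L}_{\Omega,J,q_1}+a)$ and $\lambda_2:=\lambda_p(\mathfrak{L}_{\Omega,J,q_2}+a)$ by weighted integration of the two eigenvalue equations against each other. First I would fix $0<q_1<q_2$ and use Proposition~\ref{3proposition2} (the case $q_i>0$) to produce the principal eigenfunction $\varphi_i\in C^1(\Omega)\cap C(\bar\Omega)$ with $\varphi_i>0$ in $\Omega$, $\varphi_i(l_1)=0$, $\varphi_i(l_2)>0$ and $d\int_\Omega J(x-y)\varphi_i(y)\,dy-q_i\varphi_i'+(a+\lambda_i)\varphi_i=0$ in $\Omega$; solving this for $\varphi_i'$ shows $\varphi_i'\in C(\bar\Omega)$, so in fact $\varphi_i\in C^1(\bar\Omega)$. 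Applying Proposition~\ref{3proposition2} instead to $\mathfrak{L}_{\Omega,J,-q_2}+a$ (whose principal eigenvalue equals $\lambda_2$ by Proposition~\ref{3proposition1}(3)) gives an ``adjoint'' eigenfunction $\psi_2\in C^1(\bar\Omega)$ with $\psi_2>0$ in $\Omega$, $\psi_2(l_2)=0$, $\psi_2(l_1)>0$ and $d\int_\Omega J(x-y)\psi_2(y)\,dy+q_2\psi_2'+(a+\lambda_2)\psi_2=0$ in $\Omega$.

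The heart of the proof is the identity obtained by integrating $\tfrac{d}{dx}(\varphi_1\psi_2)$ over $\Omega$. On one hand $\int_\Omega(\varphi_1\psi_2)'\,dx=\varphi_1(l_2)\psi_2(l_2)-\varphi_1(l_1)\psi_2(l_1)=0$, because $\psi_2(l_2)=0=\varphi_1(l_1)$. On the other hand, substituting $q_1\varphi_1'=d\int_\Omega J(x-y)\varphi_1(y)\,dy+(a+\lambda_1)\varphi_1$ and $q_2\psi_2'=-d\int_\Omega J(x-y)\psi_2(y)\,dy-(a+\lambda_2)\psi_2$, and using Fubini together with the symmetry of $J$ to identify $\int_\Omega\psi_2(x)\int_\Omega J(x-y)\varphi_1(y)\,dy\,dx=\int_\Omega\varphi_1(x)\int_\Omega J(x-y)\psi_2(y)\,dy\,dx=:\mathcal I$, one collects terms to get
\begin{align*}
0=\frac{q_2-q_1}{q_1q_2}\Big(d\,\mathcal I+\int_\Omega a\,\varphi_1\psi_2\,dx\Big)+\Big(\frac{\lambda_1}{q_1}-\frac{\lambda_2}{q_2}\Big)\int_\Omega\varphi_1\psi_2\,dx .
\end{align*}
Since $\mathcal I>0$ (as $J\ge0$, $J(0)>0$ and $\varphi_1,\psi_2>0$ in $\Omega$), $\int_\Omega a\,\varphi_1\psi_2\,dx\ge0$ (this is exactly where $a\ge0$ is used), $\int_\Omega\varphi_1\psi_2\,dx>0$ and $q_2>q_1>0$, this forces $\lambda_1/q_1<\lambda_2/q_2$; equivalently, $q\mapsto\lambda_p(\mathfrak{L}_{\Omega,J,q}+a)/q$ is strictly increasing on $(0,\infty)$. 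In particular, if $\lambda_1\ge0$ then $\lambda_2/q_2>\lambda_1/q_1\ge0$, so $\lambda_2>0$ and $\lambda_2=q_2(\lambda_2/q_2)>q_2(\lambda_1/q_1)\ge q_1(\lambda_1/q_1)=\lambda_1$, which is the desired strict inequality.

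The main obstacle is the remaining regime $\lambda_1<0$ (the ``persistence'' regime), where $\lambda_1/q_1<\lambda_2/q_2$ no longer yields $\lambda_1<\lambda_2$ and where using $\varphi_1$ directly as a test function in \eqref{3a_13} fails (the function $\mathfrak{L}_{\Omega,J,q_2}[\varphi_1]+a\varphi_1+\lambda_1\varphi_1=-(q_2-q_1)\varphi_1'$ is not sign-definite, since $\varphi_1$ need not be monotone). Here I would rewrite the identity above: using $q_1\int_\Omega\varphi_1'\psi_2\,dx=d\,\mathcal I+\int_\Omega(a+\lambda_1)\varphi_1\psi_2\,dx$ to eliminate $d\,\mathcal I+\int_\Omega a\varphi_1\psi_2\,dx$, it collapses to
\begin{align*}
(\lambda_1-\lambda_2)\int_\Omega\varphi_1\psi_2\,dx=-(q_2-q_1)\int_\Omega\varphi_1'(x)\psi_2(x)\,dx ,
\end{align*}
so the claim is equivalent to $\int_\Omega\varphi_1'\psi_2\,dx>0$. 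I expect this positivity to be the genuinely hard step, proved by a finer analysis of the two eigenvalue equations: on the portion of $\Omega$ where $\varphi_1$ is non-decreasing the integrand is $\ge0$ and is strictly positive near $l_1$ (there $\varphi_1'(l_1)=\tfrac{d}{q_1}\int_\Omega J(l_1-y)\varphi_1(y)\,dy>0$ and $\psi_2(l_1)>0$), while the portions where $\varphi_1$ decreases must be controlled by combining $a\ge0$, the comparative smallness of $\psi_2$ there, and the compact support of $J$ (which localizes the convolution and allows comparison with an ODE near the extrema of $\varphi_1$); an equivalent route is to establish convexity of $q\mapsto\lambda_p(\mathfrak{L}_{\Omega,J,q}+a)$ on $(0,\infty)$, which together with its evenness (Proposition~\ref{3proposition1}(3)) and the monotonicity of $\lambda_p/q$ just obtained would give $\lambda_p'\ge0$. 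Either way, this step is where both hypotheses $a\ge0$ and ``$J$ compactly supported'' are essential and where the bulk of the work lies; once it is done, the strict inequality $\lambda_1<\lambda_2$ follows, and the case $q_1<q_2<0$ is obtained from Proposition~\ref{3proposition1}(3) as indicated above.
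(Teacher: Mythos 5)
Your reduction to $0<q_1<q_2$ via Proposition \ref{3proposition1}(3), your choice of test pair ($\varphi_1$ for $q_1$ and the ``adjoint'' eigenfunction $\psi_2$ for $-q_2$ furnished by Proposition \ref{3proposition2}), and the resulting identity $(\lambda_1-\lambda_2)\int_\Omega\varphi_1\psi_2\,dx=-(q_2-q_1)\int_\Omega\varphi_1'\psi_2\,dx$ are exactly the skeleton of the paper's proof; your intermediate observation that $q\mapsto\lambda_p(\mathfrak{L}_{\Omega,J,q}+a)/q$ is strictly increasing on $(0,\infty)$ (hence the conclusion when $\lambda_1\ge0$) is correct and is not in the paper. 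But the argument is not complete: everything reduces to $\int_\Omega\varphi_1'\psi_2\,dx>0$, and in the regime $\lambda_1<0$ --- which is precisely the persistence regime where the proposition is actually invoked in Theorems \ref{3theorem12} and \ref{3theorem11} --- you offer only speculative strategies (controlling the decreasing portions of $\varphi_1$ by ``the comparative smallness of $\psi_2$'', or convexity of $\lambda_p$ in $q$) without carrying any of them out. That is a genuine gap, not a routine verification, and you correctly flag it as the hard step.

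The paper closes this gap with a pointwise statement stronger than the integral inequality you need: under the hypothesis $a\ge0$, the principal eigenfunction $\varphi_1$ normalized by $\varphi_1(l_1)=0$, $\varphi_1(l_2)>0$ is \emph{nondecreasing} on $\Omega$. The mechanism is the sign argument you already used at $l_1$ but did not exploit in the interior: at any $x_0$ with $\varphi_1'(x_0)<0$, every term on the left of $d\int_\Omega J(x_0-y)\varphi_1(y)\,dy-q\varphi_1'(x_0)+a(x_0)\varphi_1(x_0)=-\lambda_p\varphi_1(x_0)$ is nonnegative and the nonlocal term is strictly positive, forcing $\lambda_p<0$; this is then played off against the fact that on a set where $\varphi_1'\ge\delta>0$ (which exists since $\varphi_1(l_1)=0<\varphi_1(l_2)$) one gets $-\lambda_p\varphi_1\le C-q\varphi_1'\le0$ once the advection is large, i.e.\ $\lambda_p\ge0$ for large $q$. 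Once $\varphi_1'\ge0$ with $\varphi_1'\not\equiv0$ is in hand, $\int_\Omega\varphi_1'\psi_2\,dx>0$ is immediate, both regimes $\lambda_1\ge0$ and $\lambda_1<0$ are handled at once, and your case split becomes unnecessary. To complete your write-up you must prove (or cite) this monotonicity of the principal eigenfunction --- this is where $a\ge0$ enters essentially --- since without it the proposition is not established.
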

\begin{proof}
By Proposition \ref{3proposition1}(3), we just discuss the case for $q>0$.  Note that the eigenvalue problem
\eqref{3a_12} admits a principal eigen pair $(\lambda_p(\mathfrak{L}_{\Omega,J,q}+a),\varphi(x))$ satisfying $\varphi(x)>0$ in $\Omega$ and $\varphi(l_1)=0,\varphi(l_2)>0$.

{\bf{Claim}}: the principal eigenfunction $\varphi(x)$ is nondecreasing.
If the claim is not true, then there exists $x_0\in\Omega$ such that $\varphi'(x_0)<0$. Notice that
\[d\int_\Omega J(x_0-y)\varphi(y)dy-q\varphi'(x_0)+a(x_0)\varphi(x_0)+\lambda_p(\mathfrak{L}_{\Omega,J,q}+a)\varphi(x_0)=0,\ \ \ x\in\Omega.\]
By the nonnegativity of $a(x)$ and the positivity of $\varphi(x)$, the principal eigenvalue $\lambda_p(\mathfrak{L}_{\Omega,J,q}+a)<0$ for any $q>0$. However, since $\varphi\in C^1(\Omega)\cap C(\bar{\Omega})$ and $\varphi(l_1)=0,\varphi(l_2)>0$, we can choose a nonempty set $D\subset\Omega$ and a constant $\delta>0$ such that $\varphi'(x)\geq \delta$ in $D$. In view of $a(x)\in C(\bar{\Omega})\cap L^\infty(\Omega)$, we can find a constant $C>0$ such that $d\int_\Omega J(x-y)\varphi(y)dy+a(x)\varphi(x)\leq C$ for $x\in D$. When $q$ is large enough satisfying $q>\frac{C}{\delta}$, there holds
\[-\lambda_p(\mathfrak{L}_{\Omega,J,q}+a)\varphi(x)=d\int_\Omega J(x-y)\varphi(y)dy-q\varphi'(x)+a(x)\varphi(x)\leq C-q\varphi'(x)\leq 0,x\in D\]
which is a contradiction with $\lambda_p(\mathfrak{L}_{\Omega,J,q}+a)<0$ for any $q>0$. The claim is true.

For $-q_2<0<q_1< q_2$, Proposition \ref{3proposition2} means that there exist $\phi>0$ satisfying $\phi(l_1)=0,\phi(l_2)>0$, $\phi'\geq0$ and $\psi>0$ satisfying $\psi(l_2)=0,\psi(l_1)>0$, $\psi'\leq0$ such that
\begin{align}
d\int_\Omega J(x-y)\phi(y)dy-q_1\phi'(x)+a(x)\phi(x)+\lambda_p(\mathfrak{L}_{\Omega,J,q_1}+a)\phi(x)=0,\ \ \ x\in\Omega;\label{3a_17}\\
d\int_\Omega J(x-y)\psi(y)dy+q_2\psi'(x)+a(x)\psi(x)+\lambda_p(\mathfrak{L}_{\Omega,J,-q_2}+a)\psi(x)=0,\ \ \ x\in\Omega.\label{3a_19}
\end{align}
Multiplying \eqref{3a_17} by $\psi$ and \eqref{3a_19} by $\phi$, and then, integrating both sides in $\Omega$ and subtracting, we obtain
\begin{align*}
(\lambda_p(\mathfrak{L}_{\Omega,J,q_1}+a)-\lambda_p(\mathfrak{L}_{\Omega,J,-q_2}+a))\int_\Omega \phi\psi dx&=\int_\Omega (q_2\phi\psi'+q_1\phi'\psi)dx\\
&=q_2(\phi\psi)\mid_{l_1}^{l_2}+(q_1-q_2)\int_\Omega\phi'\psi dx<0
\end{align*}
By Proposition \ref{3proposition1}(3),  $\lambda_p(\mathfrak{L}_{\Omega,J,q_1}+a)<\lambda_p(\mathfrak{L}_{\Omega,J,-q_2}+a)=\lambda_p(\mathfrak{L}_{\Omega,J,q_2}+a)$. The proof is completed.

\end{proof}
\begin{remark}\label{3remark1}
The following nonlocal eigenvalue problem
\begin{equation*}
d\int_\Omega J(x-y)[\varphi(y)-\varphi(x)]dy-q\varphi'(x)+b(x)\varphi(x)+\lambda\varphi(x)=0,\ \ \ x\in\Omega.
\end{equation*}
can be written as \eqref{3a_12} as long as $a(x):=b(x)-d\int_\Omega J(x-y)dy$.
\end{remark}

\subsection{The existence, uniqueness and stability of nontrivial stationary solution}
Firstly, we shall the strong maximal principle for the stationary problem.
\begin{lemma}[Strong maximum principle]\label{3lemma5}
For any given $T>0$, assume that, for some $c(x)\in L^\infty(\Omega)$ and $q>0$, function $u(x)\in C^1(\bar{\Omega})$ satisfies $u(x)\geq 0$ in $\bar{\Omega}$ and
\begin{equation*}
\begin{cases}
        d\int_\Omega J(x-y)[u(y)-u(x)]dy-qu'(x)+c(x)u(x)\leq 0,&x\in\Omega,\\
         u(l_1)\geq0.
        \end{cases}
 \end{equation*}
Then $u(x)>0$ in $(l_1,l_2]$.
\end{lemma}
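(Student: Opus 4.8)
The plan is to argue by contradiction and convert the strong positivity into a connectedness statement about the zero set of $u$. Assume, as in the strong-positivity part of Lemma~\ref{3lemma211}, that $u\not\equiv 0$ in $\bar\Omega$, and suppose that the set $N:=\{x\in(l_1,l_2]\,:\,u(x)=0\}$ is nonempty; $N$ is closed in $(l_1,l_2]$ since $u$ is continuous. Because $u\ge 0$, every point of $N$ is a \emph{global} minimum of $u$, so the first-order information there is very rigid; I would exploit this, together with the positivity of the kernel near the origin, to show that $N$ is also open in $(l_1,l_2]$. As $(l_1,l_2]$ is connected, it would then follow that $N=(l_1,l_2]$, hence $u\equiv 0$ on $\bar\Omega$ by continuity, contradicting $u\not\equiv 0$.

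To prove openness of $N$, fix $x_0\in N$. Since $u\in C^1(\bar\Omega)$ attains its minimum value $0$ at $x_0$, I would use that $u'(x_0)=0$ when $x_0\in\Omega$, while $u'(l_2)\le 0$ (one-sided derivative at a right-endpoint minimum) when $x_0=l_2$; because $q>0$, this gives $qu'(x_0)\le 0$ in both cases. Evaluating the differential inequality at $x_0$ and inserting $u(x_0)=0$ then yields
\[
0\le d\int_\Omega J(x_0-y)u(y)\,dy\le qu'(x_0)\le 0,
\]
the first inequality because $J\ge 0$ and $u\ge 0$. Hence $\int_\Omega J(x_0-y)u(y)\,dy=0$, and since $J$ is continuous with $J(0)>0$ (assumption $\mathbf{(J)}$) there is $r>0$ with $J>0$ on $(-r,r)$, so $u$ vanishes a.e., and therefore everywhere by continuity, on $(x_0-r,x_0+r)\cap\Omega$; using continuity of $u$ at $l_2$ as well, $u$ vanishes on the whole relative neighbourhood $(x_0-r,x_0+r)\cap(l_1,l_2]$ of $x_0$, which is thus contained in $N$.

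This proves $N$ open, and the connectedness argument above then gives $N=\emptyset$, i.e.\ $u>0$ on $(l_1,l_2]$ (using $u\ge 0$). I expect the one genuinely delicate point to be the downstream endpoint $x_0=l_2$: there the interior first-derivative test is not available, and instead one has to read off the sign $u'(l_2)\le 0$ of the one-sided boundary derivative at a minimum and combine it with $q>0$ to keep the advective contribution $-qu'(l_2)$ of the good sign. This is precisely the step that uses the direction of the current, and correspondingly the fact that the boundary condition is imposed at $l_1$ rather than $l_2$. As a shortcut, the lemma can also be obtained directly from Lemma~\ref{3lemma211}: the hypothesis says exactly that the time-independent function $u(x)$ is a supersolution, on every slab $\overline{Q_T}$, of the linear parabolic problem there (with $c$ extended trivially in $t$), so the conclusion follows from that lemma whenever $u\not\equiv 0$.
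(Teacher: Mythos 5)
Your argument is correct and rests on the same core computation as the paper's proof: at a zero $x_0$ of $u$ one has $qu'(x_0)\le 0$ (via $u'(x_0)=0$ at an interior global minimum, or $u'(l_2)\le 0$ at the right endpoint together with $q>0$), so the inequality forces $\int_\Omega J(x_0-y)u(y)\,dy=0$ and hence, by $J(0)>0$, the vanishing of $u$ near $x_0$. The difference is in how the contradiction is closed. The paper stops one step earlier and simply declares $d\int_\Omega J(x_0-y)u(y)\,dy\le 0$ ``impossible''; strictly speaking that is only impossible if $u$ is already known to be positive somewhere in the support of $J(x_0-\cdot)$, so the paper's interior case is incomplete as written. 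Your open-and-closed argument on the zero set $N$ supplies exactly the missing propagation step and yields the correct dichotomy: either $u>0$ on $(l_1,l_2]$ or $u\equiv 0$ on $\bar\Omega$. You are also right that the hypothesis $u\not\equiv 0$ must be added for the conclusion as stated to hold (the lemma omits it, and the paper itself later invokes the result precisely as this dichotomy). Your alternative shortcut via the parabolic maximum principle of Lemma~\ref{3lemma211}, viewing $u$ as a time-independent supersolution on each slab $\overline{Q_T}$, is likewise valid. The only minor point worth making explicit in either route is that the differential inequality is assumed only on the open interval $\Omega$, so its use at $x_0=l_2$ requires passing to the limit $x\to l_2^-$; this is harmless because $u(l_2)=0$ kills the $c(x)u(x)$ term even though $c$ is merely $L^\infty$.
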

\begin{proof}
On the contrary, we assume that there exists $x_0\in (l_1,l_2]$ such that $u(x_0)=0$. If $x_0\in\Omega$, then we have $u'(x_0)=0$, and so
\[d\int_\Omega J(x_0-y)u(y)dy= d[\int_\Omega J(x_0-y)u(y)-u(x_0)]dy-qu'(x_0)+c(x_0)u(x_0)\leq0,\]
which is impossible. If $x_0=l_2$, then $u(x)>0$ in $(l_1,l_2)$, namely, $u'(l_2)\leq0$. Then
\[d\int_\Omega J(l_2-y)u(y)dy-qu'(l_2)= d[\int_\Omega J(l_2-y)u(y)-u(l_2)]dy-qu'(l_2)+c(l_2)u(l_2)\leq0,\]
which is a contraction due to $q>0$. The proof is completed.
\end{proof}

\begin{remark}
For any given $T>0$, assume that, for some $c(x)\in L^\infty(\Omega)$ and $q<0$, function $u(x)\in C^1(\bar{\Omega})$ satisfies $u(x)\geq 0$ in $\bar{\Omega}$ and
\begin{equation*}
\begin{cases}
        d\int_\Omega J(x-y)[u(y)-u(x)]dy-qu'(x)+c(x)u(x)\leq 0,&x\in\Omega,\\
         u(l_2)\geq0.
        \end{cases}
 \end{equation*}
Then $u(x)>0$ in $[l_1,l_2)$.
\end{remark}

In this subsection, the existence of nontrivial stationary solutions for \eqref{3a_11} or \eqref{3a_1} will be proved by the upper-lower solution methods.  For the simplicity of discussion, we consider
\begin{equation}
\label{3a_14}
\begin{cases}
        d\int_\Omega J(x-y)[u(y)-u(x)]dy-qu_x+uh(x,u)=0,&x\in{\Omega},\\
        u(l_1)=0,\\
       0 \leq u(x) \leq\max\{\max_{\bar{\Omega}} u_0(x),N\},&x\in{\Omega}
        \end{cases}
 \end{equation}
and
\begin{equation}
\label{3a_15}
\begin{cases}
        d[\int_\Omega J(x-y)u(y)dy-u]-qu_x+uh(x,u)=0,&x\in\Omega,\\
         u(l_1)=0,\\
         0 \leq u(x) \leq\max\{\max_{\bar{\Omega}} u_0(x),N\},&x\in\Omega
        \end{cases}
 \end{equation}
Notice that  \eqref{3a_14} (or  \eqref{3a_15}) has only one trivial solution $u=0$.
In the following, we still only consider the details for \eqref{3a_14}.
\begin{theorem}\label{3theorem5}
 Problem \eqref{3a_14} has a continuous and bounded nontrivial solution, provided that
\[\lambda_p(\mathfrak{L}_{\Omega,J,q}+h(x,0)-d\int_\Omega J(x-y)dy)<0.\]
\end{theorem}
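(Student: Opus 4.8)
The natural strategy is a sub- and super-solution argument for the stationary problem \eqref{3a_14}, exploiting the sign of the principal eigenvalue together with the monotonicity of $h$ in $u$. First I would take the super-solution to be the constant $\bar{u} \equiv \max\{\max_{\bar\Omega} u_0(x), N\}$: by condition \textbf{(F2)} (equivalently $h(x,\bar u)\le 0$ for $\bar u \ge N$) and $J$ integrating to at most $1$ over $\Omega$, one checks directly that $d\int_\Omega J(x-y)[\bar u - \bar u]\,dy - q\bar u_x + \bar u h(x,\bar u) = \bar u h(x,\bar u) \le 0$, so $\bar u$ is a super-solution, and it clearly dominates any admissible sub-solution in the range of \eqref{3a_14}.

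The key step is constructing a nontrivial sub-solution from the principal eigenfunction. By Proposition \ref{3proposition2} (applied with $a(x) = h(x,0) - d\int_\Omega J(x-y)\,dy$ and $q>0$), the hypothesis $\lambda_p := \lambda_p(\mathfrak{L}_{\Omega,J,q}+a) < 0$ guarantees a principal eigenfunction $\varphi \in C^1(\Omega)\cap C(\bar\Omega)$ with $\varphi > 0$ in $\Omega$, $\varphi(l_1)=0$, $\varphi(l_2)>0$, solving
\[
d\int_\Omega J(x-y)\varphi(y)\,dy - q\varphi'(x) + a(x)\varphi(x) + \lambda_p \varphi(x) = 0, \quad x\in\Omega,
\]
which, recalling Remark \ref{3remark1}, is the same as
\[
d\int_\Omega J(x-y)[\varphi(y)-\varphi(x)]\,dy - q\varphi'(x) + h(x,0)\varphi(x) = -\lambda_p \varphi(x) > 0.
\]
I would then set $\underline{u} = \delta\varphi$ for small $\delta>0$. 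Because $h$ is $C^1$ and decreasing in $u$, for $x$ in any compact subset on which $\varphi$ is bounded we have $h(x,\delta\varphi(x)) \ge h(x,0) - C\delta\varphi(x)$ for a suitable constant $C$, so
\[
d\int_\Omega J(x-y)[\underline u(y)-\underline u(x)]\,dy - q\underline u_x + \underline u\, h(x,\underline u) \ge \delta\varphi(x)\bigl(-\lambda_p - C\delta \|\varphi\|_\infty\bigr) \ge 0
\]
once $\delta$ is chosen with $\delta < -\lambda_p / (C\|\varphi\|_\infty)$; shrinking $\delta$ further ensures $\underline u \le \bar u$. Thus $\underline u$ is a nontrivial sub-solution (it vanishes at $l_1$ and is positive on $(l_1,l_2]$) sandwiched below $\bar u$.

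Finally, with the ordered pair $\underline u \le \bar u$ in hand, I would invoke the monotone iteration scheme already set up in Section \ref{3a_part2}: using the constant $K$ from \eqref{3a_9} and the linear solvability/maximum principle (Lemma \ref{3lemma211} and Theorem \ref{3theorem1}), the iterates starting from $\underline u$ form a monotone nondecreasing sequence bounded above by $\bar u$, hence converge (monotonically and, by the $C^1$ a priori bounds of Theorem \ref{3theorem3}-type arguments, in $C^1(\bar\Omega)$) to a solution $U$ of \eqref{3a_14} with $\underline u \le U \le \bar u$; in particular $U \ge \delta\varphi \not\equiv 0$, so $U$ is the desired nontrivial, continuous, bounded stationary solution. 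The main obstacle I anticipate is the sub-solution construction near the boundary point $l_1$: since $\varphi(l_1)=0$ and we need the differential inequality to hold \emph{up to} the boundary with the advection term $-q\varphi'(l_1)$ present, care is required to absorb the correction term $-C\delta\varphi(x)^2$ uniformly — this works because that term is quadratically small in $\varphi$ while the gain $-\lambda_p\varphi$ is only linearly small, but it must be checked that $\varphi'$ stays controlled (which follows from $\varphi\in C^1(\bar\Omega)$) so that no sign problem arises at $l_1$ itself.
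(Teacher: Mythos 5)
Your construction of the sub- and super-solutions is exactly the paper's: the paper also takes $\underline{u}=\epsilon\varphi$ with $\varphi$ the principal eigenfunction from Proposition \ref{3proposition2} (with $\varphi(l_1)=0$, $\varphi>0$ on $(l_1,l_2]$), chooses $\epsilon$ so that $\lambda_p+h(x,0)-h(x,\epsilon\varphi)\le 0$ — your bound $h(x,\delta\varphi)\ge h(x,0)-C\delta\varphi$ is just a quantitative version of this — and takes a large constant as the super-solution. Where you diverge is the convergence step: you run the \emph{elliptic} monotone iteration (shift by $K$ from \eqref{3a_9}, solve linear stationary problems, pass to the monotone limit), whereas the paper instead evolves the \emph{parabolic} semiflow from the sub- and super-solutions, uses Lemma \ref{3lemma211} to show $t\mapsto u(t,x;\underline{u})$ is nondecreasing and bounded, and concludes uniform convergence to a stationary solution via Dini's theorem. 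The paper's route has the advantage that every ingredient is already proved in Section \ref{3a_part2}; your route requires solvability of the linear stationary problem $d L u - qu_x - Ku = g$ with $u(l_1)=0$, which is not established anywhere in the paper — Theorem \ref{3theorem1}, which you cite, is an existence result for the time-dependent linear problem and does not cover this. That solvability is true (after integrating, the equation is a Volterra-type integral equation in $x$), but you would have to supply it, so if you want to stay within the paper's toolbox the cleaner fix is to replace your iteration by the time-monotone orbit argument. Your concern about the boundary point $l_1$ is not actually an issue: the differential inequality is only required on the open interval $\Omega$, and at $l_1$ a lower solution only needs $\underline{u}(l_1)\le 0$, which holds since $\varphi(l_1)=0$.
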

\begin{proof}
From subsection \ref{3subsection1},  we know that the eigenvalue problem
\begin{equation}\label{3a_16}
d\int_\Omega J(x-y)\varphi(y)dy-q\varphi'(x)+[h(x,0)-d\int_\Omega J(x-y)dy]\varphi(x)+\lambda\varphi(x)=0,\ \ \ x\in\Omega
\end{equation}
admits a principal eigenvalue $\lambda_p(\mathfrak{L}_{\Omega,J,q}+h(x,0)-d\int_\Omega J(x-y)dy)$ associated with  eigenfunction $\varphi(x)\in C^1(\Omega)\cap C(\bar{\Omega})$ satisfying $\varphi(x)>0$ in $(l_1,l_2]$ and $\varphi(l_1)=0$. Let $\underline{u}(x)=\epsilon\varphi(x)$ in $\bar{\Omega}$, where $\epsilon>0$ is a sufficiently small  constant satisfying $\lambda_p(\mathfrak{L}_{\Omega,J,q}+h(x,0)-d\int_\Omega J(x-y)dy)+h(x,0)-h(x,\epsilon\varphi)\leq0$. A direct calculation gives that
\begin{align*}
&d\int_\Omega J(x-y)[\underline{u}(y)-\underline{u}(x)]dy-q\underline{u}_x+h(x,\underline{u})\underline{u}\\
=&\epsilon[d\int_\Omega J(x-y)[\varphi(y)-\varphi(x)]dy-q\varphi'(x)+h(x,0)\varphi(x)]-[h(x,0)-h(x,\epsilon\varphi)]\epsilon\varphi\\
=&\epsilon\varphi[-\lambda_p(\mathfrak{L}_{\Omega,J,q}+h(x,0)-d\int_\Omega J(x-y)dy)-h(x,0)+h(x,\epsilon\varphi)]\geq0,
\end{align*}
which means that $\underline{u}$ is a lower solution of \eqref{3a_14}. Take a large constant $M>\max\{\max_{\bar{\Omega}} u_0(x),N\}$ as the upper solution $\bar{u}=M$, then $\underline{u}\leq\bar{u}$. Denote $u(t,x;\bar{u})$ as the solution of \eqref{3a_11} with initial value $\bar{u}$. Lemma \ref{3lemma211} means that
\[\bar{u}\geq u(t_2,x;\underline{u})=u(t_1,x;u(t_2-t_1,\cdot;\underline{u}))\geq u(t_1,x;\underline{u})\geq\underline{u},\ \forall 0<t_1<t_2,x\in\bar{\Omega},\]
and
\[\underline{u}\leq u(t_2,x;\bar{u})=u(t_1,x;u(t_2-t_1,\cdot;\bar{u}))\leq  u(t_1,x;\bar{u})\leq\bar{u},\ \forall 0<t_1<t_2,x\in\bar{\Omega}.\]
 Since $u(t,x;\underline{u})$ is increasing in $t$ and uniformly bounded, there exists $U_1(x)\in C(\bar{\Omega})$ such that $\lim\limits_{t\to\infty} u(t,x;\underline{u})=U_1(x)$ for any given $x\in\bar{\Omega}$. Noting that $u(t,x;\underline{u})$ is continuous in $x\in\bar{\Omega}$(Lemma \ref{3lemma4}), by Dini theorem, $u(t,x;\underline{u})$ converges uniformly to $U_1(x)$ as $t\to\infty$. Similarly, there exists $U_2(x)\in C(\bar{\Omega})$ such that $\lim\limits_{t\to\infty} u(t,x;\bar{u})=U_2(x)$ uniformly on $\bar{\Omega}$. Namely, \eqref{3a_14} has a continuous and bounded nontrivial solution. This completes the proof.
\end{proof}

\begin{theorem}
Any nontrivial  solution $U(x)$  of \eqref{3a_14} satisfies $U(x)\in C^1(\bar{\Omega})$ and $U(x)>0$ in $(l_1,l_2]$.
\end{theorem}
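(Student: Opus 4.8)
The plan is to upgrade the regularity of a nontrivial solution $U$ to $C^{1}(\bar\Omega)$ by treating the stationary equation \eqref{3a_14} as a first-order ODE in $x$, and then to read off strict positivity on $(l_{1},l_{2}]$ directly from the strong maximum principle, Lemma~\ref{3lemma5}.

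\textbf{Regularity.} A nontrivial solution $U$ of \eqref{3a_14} is continuous and bounded, with $0\le U(x)\le M_{0}:=\max\{\max_{\bar\Omega}u_{0},N\}$ by the constraint built into \eqref{3a_14}. Isolating the advection term,
\[
qU'(x)=d\int_{\Omega}J(x-y)U(y)\,dy-dU(x)\int_{\Omega}J(x-y)\,dy+U(x)h\bigl(x,U(x)\bigr),\qquad x\in\Omega.
\]
The right-hand side is continuous on $\bar\Omega$: the integral term is continuous because $\Omega$ is bounded and $J$ is uniformly continuous on the relevant compact set, while the remaining terms are continuous since $U$ is continuous and $h\in C^{1}(\bar\Omega\times\mathbb{R}^{+})$. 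As $q>0$, $U'$ therefore agrees with a continuous function, and integrating from $l_{1}$ with $U(l_{1})=0$ yields $U\in C^{1}(\bar\Omega)$; note that only continuity of $J$ is used here, precisely because $\Omega$ is bounded.

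\textbf{Positivity.} Set $c(x):=h(x,U(x))$. Then $c\in L^{\infty}(\Omega)$ because $h\in C^{1}(\bar\Omega\times\mathbb{R}^{+})$ and $0\le U\le M_{0}$; moreover $U\in C^{1}(\bar\Omega)$, $U\ge0$ on $\bar\Omega$, $U(l_{1})=0\ge0$, and \eqref{3a_14} reads
\[
d\int_{\Omega}J(x-y)\bigl[U(y)-U(x)\bigr]\,dy-qU'(x)+c(x)U(x)=0\le0,\qquad x\in\Omega.
\]
Since $q>0$ and $U\not\equiv0$, Lemma~\ref{3lemma5} gives $U(x)>0$ for every $x\in(l_{1},l_{2}]$.

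There is no real obstacle in this argument; the single point that must not be overlooked is that the nontriviality of $U$ is exactly what is invoked with the strong maximum principle, since $U\equiv0$ also satisfies the differential inequality above. For the Dirichlet problem \eqref{3a_15} the identical proof applies once one rewrites $d\bigl[\int_{\Omega}J(x-y)U(y)\,dy-U(x)\bigr]=d\int_{\Omega}J(x-y)[U(y)-U(x)]\,dy-d\bigl(1-\int_{\Omega}J(x-y)\,dy\bigr)U(x)$ and absorbs the extra zero-order term into $c(x)$, which remains bounded.
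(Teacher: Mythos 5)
Your proof is correct and follows essentially the same route as the paper: positivity via the strong maximum principle (Lemma~\ref{3lemma5}) applied with a bounded zero-order coefficient, and $C^1$ regularity read off from the equation itself by isolating $qU'(x)$ and noting the continuity of the remaining terms. Your ordering (regularity before positivity) is in fact the cleaner one, since Lemma~\ref{3lemma5} requires $U\in C^1(\bar{\Omega})$, and you correctly flag that the nontriviality of $U$ is what rules out the $U\equiv 0$ branch of the dichotomy.
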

\begin{proof}
Obviously, $0\leq U(x) \leq \max\{\max_{\bar{\Omega}} u_0(x),N\}$.  Note that we can find a positive constant $K>\|h(x,0)\|_\infty$ such that
\[ d\int_\Omega J(x-y)[U(y)-U(x)]dy-qU_x-KU=[-K-h(x,U)]U\leq (\|h(x,0)\|_\infty-K) U\leq0.\]
It follows from Lemma \ref{3lemma5} that $U(x)>0$ in $(l_1,l_2]$ or $U(x)\equiv0$ in $\bar{\Omega}$. If there is $x_0\in(l_1,l_2]$ satisfying $U(x_0)=0$, then  it deduces $U(x)\equiv0$ in $\bar{\Omega}$, which is a contradiction with the definition of nontrivial solution. And so, $U(x)>0$ in $(l_1,l_2]$.

 In addition, similar to the argument of \cite{Bates2007}, we prove $U_1(x),U_2(x)\in C^1(\bar{\Omega})$. Without loss of generality, we only prove $U_1(x)\in C^1(\bar{\Omega})$. Defining $w=U_1(x_1)-U_1(x_2)$, we know
\begin{align*}
|q||U_1'(x_1)-U_1'(x_2)|\leq&d\int_\Omega |J(x_1-y)-J(x_2-y)|U_1(y)dy+|h(x_1,U_1(x_1))-d\int_\Omega J(x_1-y)dy||w|\\
&+d\int_\Omega |J(x_1-y)-J(x_2-y)|dyU_1(x_2)+|h(x_1,U_1(x_1))-h(x_1,U_1(x_2))|U_1(x_2)\\
&+|h(x_1,U_1(x_2))-h(x_2,U_1(x_2))|U_1(x_2)\\
\leq& Md\int_\Omega |J(x_1-y)-J(x_2-y)|dy+|h(x_1,U_1(x_1))-d\int_\Omega J(x_1-y)dy||w|\\
&+M|h(x_1,U_1(x_1))-h(x_1,U_1(x_2))|+M|h(x_1,U_1(x_2))-h(x_2,U_1(x_2))|,
\end{align*}
which means $U_1'(x)\in C(\bar{\Omega})$ since function $h(x,u)\in C^1(\bar{\Omega}\times\mathbb{R}^+)$, $J:\mathbb{R}\to\mathbb{R}^+$ is continuous and $U_1(x)\in C(\bar{\Omega})$. The proof is completed.
\end{proof}

\begin{theorem}\label{3theorem6}
The nontrivial solution  of \eqref{3a_14} is unique.
\end{theorem}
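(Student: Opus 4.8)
The plan is a sliding--comparison argument exploiting the strict monotonicity of $h(x,\cdot)$ in $u$, the main extra work being the common degeneracy of any two nontrivial solutions at the left endpoint $l_1$.

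First I would record the boundary behaviour at $l_1$. Let $U$ be any nontrivial solution of \eqref{3a_14}; then $U\in C^1(\bar\Omega)$, $U(l_1)=0$ and $U>0$ on $(l_1,l_2]$. Evaluating the equation at $x=l_1$ and using $U(l_1)=0$ gives $qU'(l_1)=d\int_\Omega J(l_1-y)U(y)\,dy$, which is strictly positive because $J(0)>0$, $J$ is continuous and $U>0$ on $(l_1,l_2]$; hence $U'(l_1)>0$. Consequently, for two nontrivial solutions $U_1,U_2$, the quotient $U_2/U_1$ extends to a positive continuous function on $\bar\Omega$ with boundary value $U_2'(l_1)/U_1'(l_1)$ at $l_1$, so $\gamma:=\max_{\bar\Omega}\,U_2/U_1$ is finite, positive and attained. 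If $\gamma<1$ then $\max_{\bar\Omega}U_1/U_2\ge 1/\gamma>1$, so after relabelling we may assume $\gamma\ge1$.

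Next I would show $\gamma U_1$ is a supersolution and compare it with $U_2$. Since $\gamma U_1\ge U_1$ on $\bar\Omega$ and $h(x,\cdot)$ is decreasing,
\[
d\int_\Omega J(x-y)[\gamma U_1(y)-\gamma U_1(x)]\,dy-q(\gamma U_1)_x+(\gamma U_1)h(x,\gamma U_1)=\gamma U_1\bigl(h(x,\gamma U_1)-h(x,U_1)\bigr)\le 0 .
\]
Put $w:=\gamma U_1-U_2$; then $w\ge0$ on $\bar\Omega$ by the definition of $\gamma$, $w(l_1)=0$, and subtracting the equation for $U_2$ and linearising the reaction difference by the mean value theorem for $g(x,s):=s\,h(x,s)\in C^1$ produces $\tilde c\in L^\infty(\Omega)$ with
\[
d\int_\Omega J(x-y)[w(y)-w(x)]\,dy-qw_x+\tilde c(x)w\le 0\quad\text{in }\Omega,\qquad w(l_1)=0 .
\]
If $\gamma$ is attained at some $x_0\in(l_1,l_2]$, then $w(x_0)=0$ and the strong maximum principle (Lemma \ref{3lemma5}) forces $w\equiv0$ (otherwise $w>0$ on $(l_1,l_2]$, contradicting $w(x_0)=0$). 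If instead $\gamma$ is attained only at $l_1$, then $w>0$ on $(l_1,l_2]$ while $w(l_1)=0$ and $w'(l_1)=\gamma U_1'(l_1)-U_2'(l_1)=0$ (because $\gamma=U_2'(l_1)/U_1'(l_1)$ in this case); evaluating the inequality for $w$ at $x=l_1$ yields $d\int_\Omega J(l_1-y)w(y)\,dy\le 0$, which is impossible since $J(0)>0$ and $w>0$ near $l_1$. Hence in all cases $w\equiv0$, i.e. $U_2\equiv\gamma U_1$. Substituting back into \eqref{3a_14} gives $\gamma U_1\bigl(h(x,\gamma U_1)-h(x,U_1)\bigr)\equiv0$; since $U_1>0$ on $(l_1,l_2]$ and $h(x,\cdot)$ is strictly decreasing, this forces $\gamma=1$, so $U_1\equiv U_2$.

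The main obstacle is the degeneracy at $l_1$: both solutions vanish there, so the classical KPP uniqueness argument (which uses a positive lower bound for the solution to define the comparison factor and to invoke the strong maximum principle) does not apply directly. The Hopf-type identity $U'(l_1)=\tfrac{d}{q}\int_\Omega J(l_1-y)U(y)\,dy>0$, read straight off the nonlocal equation, is precisely what keeps $\gamma$ finite and pins down $w'(l_1)=0$, and the case ``$\gamma$ attained only at $l_1$'' has to be ruled out by testing the differential inequality at the boundary point itself, where $d\int_\Omega J(l_1-y)w(y)\,dy>0$ thanks to $J(0)>0$.
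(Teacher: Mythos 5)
Your argument is correct, but it is a genuinely different route from the paper's. The paper proves uniqueness by a duality/spectral argument: setting $v=\sup\{u_1,u_2\}$ and $b=h(x,v)$, it builds (approximate) principal eigenfunctions $\varphi_n$ of the reversed-advection operator $\mathfrak{L}_{\Omega,J,-q}-d\int_\Omega J(x-y)dy+b$ vanishing at $l_2$, tests the eigen-inequality against $u_1$, integrates by parts so that the boundary term $(q\varphi_n u_1)\mid_{l_1}^{l_2}$ drops out, and derives $0\leq\liminf_n\mathcal{R}_n\leq\limsup_n\mathcal{R}_n<0$. You instead run a sliding comparison on the ratio $U_2/U_1$, and the decisive new ingredient is the Hopf-type identity $U'(l_1)=\tfrac{d}{q}\int_\Omega J(l_1-y)U(y)\,dy>0$ read off the equation at the degenerate endpoint, which makes $\gamma=\max_{\bar\Omega}U_2/U_1$ finite and attained and pins down $w'(l_1)=0$ in the boundary case; the strong maximum principle (Lemma \ref{3lemma5}, used in its dichotomy form, exactly as the paper uses it elsewhere) then does the rest. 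Your approach is more elementary — it avoids the generalized principal eigenvalue $\lambda_p''$, the diagonal extraction of eigenfunctions, and the $C^1$ compactness estimates — while the paper's spectral route is the one that generalizes when a ratio comparison is unavailable (e.g., higher dimensions or solutions without comparable boundary derivatives) and fits the eigenvalue framework the rest of the paper is built on. Two small points you share with the paper rather than introduce: you evaluate the equation and the differential inequality at $x=l_1$, which is legitimate since all terms extend continuously to $\bar\Omega$ for $C^1(\bar\Omega)$ solutions (the paper's Lemma \ref{3lemma5} does the same at $l_2$); and your final step needs $h(x,\cdot)$ strictly decreasing where $U_1>0$ — the paper's concluding inequality $\limsup_n\mathcal{R}_n<0$ needs exactly the same strictness, so this is the intended reading of the hypothesis, not a gap in your proof.
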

\begin{proof}
To do so assume that there are two different solutions $u_1(x),u_2(x)\in C^1(\bar{\Omega})$ satisfying $u_1(l_1)=u_2(l_1)=0$. Set $v(x)=\sup\{u_1(x),u_2(x)\}$ and $b(x)=h(x,v)$.
Define
\begin{align*}
\lambda_p''(\mathfrak{L}_{\Omega,J,-q}&-d\int_\Omega J(x-y)dy+b):=\\
\inf\{\lambda\in\mathbb{R}&|\exists \varphi\in C^1(\Omega)\cap C(\bar{\Omega}) \ \mbox{satisfying} \ \varphi>0 \ \mbox{in} \ [l_1,l_2), \\  &\varphi(l_2)=0, \ \mbox{and} \
\mathfrak{L}_{\Omega,J,-q}[\varphi(x)]-d\int_\Omega J(x-y)dy\varphi(x)+b(x)\varphi(x)+\lambda\varphi(x)\geq0\}.
\end{align*}
By \cite[Proposition 6.1]{Coville2021}, we know  $\lambda_p''(\mathfrak{L}_{\Omega,J,-q}-d\int_\Omega J(x-y)dy+b)\leq0$. If $\lambda_p''(\mathfrak{L}_{\Omega,J,-q}-d\int_\Omega J(x-y)dy+b)=0$, we can choose a  monotone decreasing sequence $\{\lambda_n\}_{n\in \mathbb{N}}$ of positive numbers such that $|\lambda_n|<1$ for all $n\in\mathbb{N}$ and $\lambda_n\to0$ as $n\to\infty$ and choose a nonnegative function sequence $(\varphi_n)_{n\in\mathbb{N}}$ such that $\varphi_n(l_2)=0$, $\|\varphi_n\|\leq  N$ such that there holds
\[\mathfrak{L}_{\Omega,J,-q}[\varphi_n(x)]-d\int_\Omega J(x-y)dy\varphi_n(x)+b(x)\varphi_n(x)+\lambda_n\varphi_n(x)\geq0, \ \forall \ x\in\Omega.\]
Then we have $0\leq \varphi_n(x)\leq w:=\max\{\max_{\bar{\Omega}} u_0(x),N\}$, and so $(\varphi_n)_{n\in\mathbb{N}}$ is uniformly bounded in $C([l_1,l_2])$. Obviously, $\inf_{\bar{\Omega}}\varphi'_n(x)$ is achieved at some point $x_1^n$ on $\bar{\Omega}$. Then there exists a large constant $M>0$ such that
\begin{align*}
q\inf_{\bar{\Omega}}\varphi'_n(x)&=q\varphi'_n(x_1^n)\\
&\geq -d\int_{\Omega} J(x_1^n-y)\varphi_n(y)dy+d\int_\Omega J(x_1^n-y)dy\varphi_n(x_1^n)-b(x_1^n)\varphi_n(x_1^n)-\lambda_n\varphi_n(x_1^n)\\
&\geq -2dw-|\lambda_n|w -(\sup_{u\in[0,w]}h(x,u))w\\
&\geq -2dw-w -(\sup_{u\in[0,w]}h(x,u))w:=-M.
\end{align*}
By $\varphi_n\geq0$ and $\varphi_n(l_2)=0$, we deduce that
$(\varphi_n)_{n\in\mathbb{N}}$ is also uniformly bounded in $C^1(\bar{\Omega})$. Therefore, by diagonal extraction process, we can find a converging subsequence such that there exists $\varphi\geq,\not\equiv0$ such that $\varphi_n\to\varphi$ in
$C^1_{loc}({\Omega})$.  Moreover,  $\varphi\in L^\infty(\bar{\Omega})$, which satisfies $\varphi(l_2)=0$ and
\[\mathfrak{L}_{\Omega,J,-q}[\varphi(x)]-d\int_\Omega J(x-y)dy\varphi(x)+b(x)\varphi(x)=0, \ \forall \ x\in\Omega.\]
Then, there exist sequences $(\lambda_n)_{n\in\mathbb{N}}$ and $(\varphi_n)_{n\in\mathbb{N}}$ and a nonnegative nontrivial function $\varphi\in L^\infty(\bar{\Omega})$ such that
\begin{enumerate}[(1)]
 \item  $\lambda_n>0$, $(\lambda_n)_{n\in\mathbb{N}}$ is a monotone decreasing sequence and converges to $0$.
 \item  $(\varphi_n)_{n\in\mathbb{N}}>0$ in $(l_1,l_2]$ and $\varphi_n\in C^1(\Omega)\cap C(\bar{\Omega})$ converges to $\varphi$ in $C^1(\bar{\Omega})$ for any $x\in \bar{\Omega}$.
 \item   $\varphi_n(l_2)=\varphi(l_2)=0$ and
\begin{align}\label{3a_28}
\mathfrak{L}_{\Omega,J,-q}[\varphi_n(x)]-d\int_\Omega J(x-y)dy\varphi_n(x)+b(x)\varphi_n(x)+\lambda_n\varphi_n(x)\geq0, \ \forall \ x\in\Omega.
\end{align}
\end{enumerate}
If $\lambda_p''(\mathfrak{L}_{\Omega,J,-q}-d\int_\Omega J(x-y)dy+b)<0$, it follows from the definition of $\lambda_p''$ that there is $\varphi>0$ in $[l_1,l_2)$ satisfying $\varphi\in C^1(\Omega)\cap C(\bar{\Omega})$ such that $\varphi(l_2)=0$ and
\[\mathfrak{L}_{\Omega,J,-q}[\varphi(x)]-d\int_\Omega J(x-y)dy\varphi(x)+b(x)\varphi(x)=-\lambda_p''\varphi(x)\geq0,  \ \forall \ x\in\Omega.\]
We can obtain $(1)-(3)$ by
taking the sequence $(\varphi_n)_{n\in\mathbb{N}}$ defined for all $n$ by $\varphi_n=\varphi$, and any decreasing sequence $(\lambda_n)_{n\in\mathbb{N}}$ that converges to $0$.

We multiply \eqref{3a_28} by $u_1(x)$ and integrate it over $\Omega$, and then
\[\mathcal{R}_n:=d\int_\Omega \int_\Omega J(x-y)(\varphi_n(y)-\varphi_n(x))dyu_1(x)dx+q\int_\Omega\varphi_n'(x)u_1(x)dx+\int_\Omega(b(x)+\lambda_n)
\varphi_n(x)u_1(x)dx\geq0.\]
Thus,
\[\liminf\limits_{n\to\infty}\mathcal{R}_n\geq0.\]
Notice that
\begin{align*}
\mathcal{R}_n=&d\int_\Omega \int_\Omega J(x-y)(u_1(y)-u_1(x))dy\varphi_n(x)dx-q\int_\Omega\varphi_n(x)u_1'(x)dx+\varphi_n(x)u_1(x)|_{l_1}^{l_2}\\
&+\int_\Omega(b(x)+\lambda_n)
\varphi_n(x)u_1(x)dx\\
&=\int_\Omega(b(x)-h(x,u_1(x)))\varphi_n(x)u_1(x)dx+\int_\Omega\lambda_n\varphi_n(x)u_1(x)dx.
\end{align*}
Since $h(x,u)$ is decreasing in $u$ on $\mathbb{R}^+$ and $v(x)\geq,\not\equiv u_1(x)$, we get
\[\limsup\limits_{n\to\infty}\mathcal{R}_n=\int_\Omega [h(x,v(x))-h(x,u_1(x))]\varphi(x)u_1(x)dx<0.\]
Therefore,
\[0\leq \liminf\limits_{n\to\infty}\mathcal{R}_n\leq\limsup\limits_{n\to\infty}\mathcal{R}_n<0.\]
We get a contradiction, and so $u_1(x)\equiv u_2(x)$. The proof is completed.
\end{proof}

\begin{theorem}\label{3theorem7}
Assume that $U(x)\in C^1(\bar{\Omega})$ is the unique nontrivial solution  of \eqref{3a_14}. Then $U(x)$ is asymptotically stable.
\end{theorem}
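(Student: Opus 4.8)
The plan is to obtain the global attractivity of $U$ by squeezing an arbitrary orbit between two monotone orbits that both converge to $U$, the identification of the common limit being forced by the uniqueness of the nontrivial solution (Theorem \ref{3theorem6}). No separate Lyapunov-stability argument is needed, since in the present paper ``asymptotically stable'' means precisely $\|u(t,\cdot;u_0)-U\|_\infty\to0$ for every admissible $u_0$.

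So fix $u_0$ satisfying \eqref{3a_2} and write $u(t,x):=u(t,x;u_0)$. First I would record two boundary facts. Evaluating \eqref{3a_11} at $x=l_1$ and using $u(t,l_1)\equiv0$ (hence $u_t(t,l_1)=0$) gives $q\,u_x(t,l_1)=d\int_\Omega J(l_1-y)u(t,y)\,dy$, which is strictly positive for every $t>0$ because $u(t,\cdot)>0$ in $(l_1,l_2]$ by the maximum principle (Lemma \ref{3lemma211}); similarly, evaluating the eigenvalue equation \eqref{3a_16} at $x=l_1$ gives $q\varphi'(l_1)=d\int_\Omega J(l_1-y)\varphi(y)\,dy>0$, where $\varphi$ is the principal eigenfunction. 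Now pick any $t_1>0$. Both $u(t_1,\cdot)$ and $\varphi$ are $C^1(\bar\Omega)$, vanish at $l_1$ with strictly positive derivative there, and are positive on $(l_1,l_2]$; consequently there is $\epsilon>0$ small (depending on $u_0$ and $t_1$) with $\epsilon\varphi(x)\le u(t_1,x)$ on $\bar\Omega$. Also, by the a~priori bound \eqref{3a_a1}, $u(t_1,x)\le\max\{\max_{\bar\Omega}u_0,N\}<M$ for $M$ chosen larger than $\max\{\max_{\bar\Omega}u_0,N\}$.

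Next I would reuse the monotone-convergence machinery from the proof of Theorem \ref{3theorem5}. Since $f(t,x,u)=uh(x,u)<0$ for $u>N$, the constant $\bar u\equiv M$ is an upper solution, so $t\mapsto u(t,\cdot;M)$ is nonincreasing; being bounded below by the lower solution $\epsilon\varphi$ (note that shrinking $\epsilon$ preserves the lower-solution property, because $h$ is decreasing in $u$, so the small $\epsilon$ chosen above is admissible), it converges uniformly on $\bar\Omega$ to a nontrivial solution of \eqref{3a_14}, which by Theorem \ref{3theorem6} is $U$. Likewise $t\mapsto u(t,\cdot;\epsilon\varphi)$ is nondecreasing, bounded, and converges uniformly to the same $U$. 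Using the semigroup property $u(t+t_1,\cdot;u_0)=u(t,\cdot;u(t_1,\cdot;u_0))$ together with the comparison principle (Lemma \ref{3lemma211} applied to differences), the inequalities $\epsilon\varphi\le u(t_1,\cdot)\le M$ propagate to
\[
u(t,x;\epsilon\varphi)\ \le\ u(t+t_1,x;u_0)\ \le\ u(t,x;M),\qquad t\ge0,\ x\in\bar\Omega,
\]
and letting $t\to\infty$ both outer terms tend to $U(x)$ uniformly, whence $\|u(s,\cdot;u_0)-U\|_\infty\to0$ as $s\to\infty$. The only genuinely delicate step is the boundary comparison near $l_1$: because $u(t_1,\cdot)$ and $\varphi$ both degenerate at the left endpoint, the estimate $\epsilon\varphi\le u(t_1,\cdot)$ relies on the Hopf-type strict positivity of the $x$-derivatives at $l_1$ derived above; once that is in hand, the rest is a routine application of the comparison and monotone-convergence arguments already established.
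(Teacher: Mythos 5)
Your proof is correct and follows essentially the same route as the paper's: squeeze $u(t+t_1,\cdot;u_0)$ between the monotone orbits starting from $\epsilon\varphi$ and from a large constant upper solution, both of which converge uniformly to the unique nontrivial solution $U$ by Theorem \ref{3theorem6}. Your explicit verification that $\epsilon\varphi\le u(t_1,\cdot)$ is achievable despite both functions vanishing at $l_1$ (reading the strict positivity of $q\,u_x(t_1,l_1)$ and $q\,\varphi'(l_1)$ off the equations) supplies a detail the paper passes over silently.
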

\begin{proof}
 From Theorem \ref{3theorem5}, we can choose the upper  solution $\bar{u}=\max\{\max_{\bar{\Omega}} u_0(x),N\}$ and lower solution $\underline{u}=\epsilon\varphi$, where $\varphi$ is the corresponding principal eigenfunction with principal eigenvalue $\lambda_p(\mathfrak{L}_{\Omega,J,q}+h(x,0)-d\int_\Omega J(x-y)dy)$ and $\epsilon$ is small enough such that $\epsilon\varphi\leq u(1,x;u_0)$, where $u(1,x;u_0)>0$. And then, for any given $u_0(x)\in C^1(\bar{\Omega})$, the  comparison argument deduces
\[\underline{u}\leq u(t,x;\underline{u})\leq u(t,x;u(1,x;u_0))\leq u(t,x;\bar{u})\leq\bar{u}, \forall t>1.\]
From the fact that $u(t,x;\underline{u})$ is increasing in $t>1$ and $u(t,x;\bar{u})$ is decreasing in $t>1$, there exist $U_1(x),U_2(x)$
 such that $\lim\limits_{t\to\infty} u(t,x;\underline{u})=U_1(x)$ and $\lim\limits_{t\to\infty} u(t,x;\bar{u})=U_2(x)$ uniformly on $\bar{\Omega}$.  It follows from Theorem \ref{3theorem6} that $U_1(x)=U_2(x)=U(x)$ and $\lim\limits_{t\to\infty} u(t,x;u_0)=U(x)$ uniformly on $\bar{\Omega}$. The proof is completed.
\end{proof}

\begin{lemma}\label{3lemma3}
Denote by $U_0=0$  the unique trivial solution of  \eqref{3a_14}.
\begin{enumerate}[(1)]
 \item If $\lambda_p(\mathfrak{L}_{\Omega,J,q}+h(x,0)-d\int_\Omega J(x-y)dy)<0$, then $U_0$ is unstable.
 \item If $\lambda_p(\mathfrak{L}_{\Omega,J,q}+h(x,0)-d\int_\Omega J(x-y)dy)>0$,  then $U_0$  is asymptotically stable.
\end{enumerate}
\end{lemma}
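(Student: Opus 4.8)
The plan is to run both cases through the principal eigenpair. Write $\lambda_p:=\lambda_p(\mathfrak{L}_{\Omega,J,q}+h(x,0)-d\int_\Omega J(x-y)dy)$ and let $\varphi$ be the principal eigenfunction produced by Proposition \ref{3proposition2} and already used in Theorem \ref{3theorem5}: $\varphi\in C^1(\Omega)\cap C(\bar\Omega)$, $\varphi>0$ in $(l_1,l_2]$, $\varphi(l_1)=0$, and \eqref{3a_16} holds with equality. Before starting I would record two facts about $\varphi$ at the inflow point. Solving \eqref{3a_16} for $q\varphi'(x)$ exhibits $\varphi'$ as a function that is continuous on all of $\bar\Omega$ (since $J$ is continuous, $\varphi\in C(\bar\Omega)$ and $h(\cdot,0)\in C(\bar\Omega)$), so in fact $\varphi\in C^1(\bar\Omega)$; evaluating the resulting identity at $x=l_1$ gives $q\varphi'(l_1)=d\int_\Omega J(l_1-y)\varphi(y)dy>0$, because $J(0)>0$ and $\varphi>0$ on a right neighbourhood of $l_1$. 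Hence $\varphi'(l_1)>0$, so for any admissible $u_0$ (see \eqref{3a_2}) the quotient $u_0/\varphi$ extends continuously to $\bar\Omega$ (Taylor expansion at $l_1$, positivity of $\varphi$ on $(l_1,l_2]$ elsewhere), and therefore $u_0\le C\varphi$ on $\bar\Omega$ for some constant $C>0$.

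For (1), when $\lambda_p<0$, I would show $U_0=0$ is not stable by flowing tiny multiples of $\varphi$ up to the nontrivial steady state. Fix $\delta>0$. Exactly as in the lower solution construction inside the proof of Theorem \ref{3theorem5}, continuity of $h$ together with $\lambda_p<0$ lets me choose $\eta\in(0,1)$ so small that $\|\eta\varphi\|_\infty<\delta$ and $\lambda_p+h(x,0)-h(x,\eta\varphi)\le0$ on $\bar\Omega$; then $u_0:=\eta\varphi$ satisfies \eqref{3a_2} and is a (time independent) lower solution of \eqref{3a_11}. By the monotone semiflow argument of Theorem \ref{3theorem5} (comparison principle plus the semigroup property), $t\mapsto u(t,x;u_0)$ is nondecreasing and bounded, hence converges, uniformly by Dini's theorem, to a nontrivial stationary solution of \eqref{3a_14}, which by the uniqueness Theorem \ref{3theorem6} is the fixed function $U$. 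Therefore $\|u(t,\cdot;u_0)\|_\infty\to\|U\|_\infty>0$. Since $\delta$ was arbitrary while $\|U\|_\infty$ is a fixed positive number, choosing $\epsilon_0:=\tfrac12\|U\|_\infty$ produces admissible data of arbitrarily small sup-norm whose orbit eventually leaves the $\epsilon_0$-ball about $0$, so $U_0$ is unstable.

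For (2), when $\lambda_p>0$, I would exhibit an exponentially decaying upper solution. Put $\bar u(t,x):=Ce^{-\sigma t}\varphi(x)$ for some $\sigma\in(0,\lambda_p]$ and $C>0$. Using \eqref{3a_16} to substitute $-d\int_\Omega J(x-y)[\varphi(y)-\varphi(x)]dy+q\varphi'(x)=(h(x,0)+\lambda_p)\varphi(x)$, a direct computation gives
\begin{align*}
&\bar u_t-d\int_\Omega J(x-y)[\bar u(t,y)-\bar u(t,x)]dy+q\bar u_x-\bar u\,h(x,\bar u)\\
&\qquad=Ce^{-\sigma t}\varphi(x)\Big[\lambda_p-\sigma+h(x,0)-h(x,\bar u)\Big]\ge0,
\end{align*}
since $h(\cdot,u)$ is nonincreasing, so $h(x,0)-h(x,\bar u)\ge0$ as $\bar u\ge0$, and $\sigma\le\lambda_p$. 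Moreover $\bar u(t,l_1)=0$, and fixing $C$ as in the first paragraph gives $\bar u(0,x)=C\varphi(x)\ge u_0(x)$ on $\bar\Omega$. Thus $\bar u$ is an upper solution and $\underline u\equiv0$ a lower solution of \eqref{3a_11}, so the comparison principle (Lemma \ref{3lemma211}) yields $0\le u(t,x;u_0)\le Ce^{-\sigma t}\varphi(x)$ for all $t>0$; hence $\|u(t,\cdot;u_0)\|_\infty\to0$ as $t\to\infty$. As this holds for every admissible $u_0$, $U_0=0$ is asymptotically stable.

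The two points needing care are the boundary comparison $u_0\le C\varphi$ near $l_1$ -- it is not automatic, since both functions vanish there, and it rests precisely on the strict sign $\varphi'(l_1)>0$ extracted from \eqref{3a_16} -- and the sign of the bracket in the supersolution inequality of (2), where the monotonicity of $h$ in $u$ and the choice $\sigma\le\lambda_p$ are exactly what make it nonnegative. Everything else is a reuse of the monotone iteration and comparison machinery already established for \eqref{3a_11}.
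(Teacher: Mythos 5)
Your proof is correct, and part (2) is essentially the paper's own argument: the paper likewise takes the decaying supersolution $M e^{-\lambda_p t/2}\varphi(x)$ built from the principal eigenfunction and concludes by comparison (you add the useful observation, which the paper glosses over, that the domination $u_0\le C\varphi$ near $l_1$ rests on $q\varphi'(l_1)=d\int_\Omega J(l_1-y)\varphi(y)\,dy>0$).

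Part (1) is where you genuinely diverge. The paper argues by contradiction at the \emph{linear} level: assuming stability, it bounds $f(x,u)\ge\bigl(h(x,0)+\tfrac{\lambda_p}{2}\bigr)u$ for small $u$, pairs the integral form of the solution against the principal eigenfunction of the \emph{reversed-advection} operator $\mathfrak{L}_{\Omega,J,-q}$ (which satisfies $\varphi(l_1)>0$, $\varphi(l_2)=0$, so the boundary term $-(q\varphi\int_0^t u)\big|_{l_1}^{l_2}$ has the right sign), and iterates the resulting integral inequality to force $\langle u(t,\cdot),\varphi\rangle\to\infty$, contradicting the assumed bound. You instead run the nonlinear monotone semiflow: a small multiple $\eta\varphi$ of the eigenfunction is a subsolution, its orbit increases to a nontrivial stationary solution, and uniqueness (Theorem \ref{3theorem6}) pins that limit to the fixed $U$, so orbits of arbitrarily small data reach the fixed level $\|U\|_\infty$. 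Both are legitimate here, and your use of Theorems \ref{3theorem5}--\ref{3theorem6} is not circular since they precede the lemma and do not invoke it. The trade-off: your route is shorter and more transparent but leans on existence \emph{and uniqueness} of the positive steady state, whereas the paper's duality argument needs only the principal eigenpair and would survive even if uniqueness were unavailable (e.g.\ for non-monotone $h$); on the other hand it requires the adjoint-type eigenfunction for $-q$ and a careful sign check of the boundary terms, which your argument avoids entirely.
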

\begin{proof}
(1)  Assume that $U_0$ is stable. That is to say that for any $\epsilon>0$ and small enough $\delta>0$,  given initial value  $u_0(x)\in[0,\delta)$ in $\bar{\Omega}$, we have $\|u(t,x;u_0)\|_\infty<\epsilon$ for all $t>0$.  Obviously, by assumption {\bf{(F1)}}, for any $0\leq u<\epsilon$, there holds
$$f(x,u)\geq (h(x,0)+\frac{\lambda_p(\mathfrak{L}_{\Omega,J,-q}+h(x,0)-d\int_\Omega J(x-y)dy)}{2})u.$$
Then  $u(t,x;u_0)>0$ in $(0,+\infty)\times(l_1,l_2]$ by maximum principle. And the solution of \eqref{3a_11} can be written as
\[u(t,x;u_0)=u_0(x)+\int_0^t\{d\int_\Omega J(x-y)[u(s,y)-u(s,x)]dy-qu_x(s,x)+f(x,u(s,x))\}ds.\]
By Proposition \ref{3proposition1}(3), $$\lambda_p(\mathfrak{L}_{\Omega,J,-q}+h(x,0)-d\int_\Omega J(x-y)dy)=\lambda_p(\mathfrak{L}_{\Omega,J,q}+h(x,0)-d\int_\Omega J(x-y)dy).$$ Let $\varphi>0$ in $\Omega$ be the corresponding principal eigenfunction with $\lambda_p(\mathfrak{L}_{\Omega,J,-q}+h(x,0)-d\int_\Omega J(x-y)dy)$. Proposition \ref{3proposition2} means $\varphi(l_1)>0,\varphi(l_2)=0$. Define the inner product $\langle\cdot,\cdot\rangle$ in $L^2(\Omega)$. Then
\begin{align*}
\langle u(t,x;&u_0),\varphi\rangle\\
> \langle&d\int_0^t\int_\Omega J(x-y)[u(s,y)-u(s,x)]dyds-\int_0^tqu_x(s,x)ds,\varphi\rangle+\langle\int_0^tf(x,u(s,x))ds,\varphi\rangle\\
\geq\langle&d\int_\Omega J(x-y)[\varphi(y)-\varphi(x)]dy+q\varphi',\int_0^tu(s,x)ds\rangle-(q\varphi\int_0^tu(s,x)ds)|_{l_1}^{l_2}\\
&+\langle(h(x,0)+\frac{\lambda_p(\mathfrak{L}_{\Omega,J,-q}+h(x,0)-d\int_\Omega J(x-y)dy)}{2})\varphi,\int_0^tu(s,x)ds\rangle\\
\geq\langle&-\frac{\lambda_p(\mathfrak{L}_{\Omega,J,-q}+h(x,0)-d\int_\Omega J(x-y)dy)}{2}\varphi,\int_0^tu(s,x)ds\rangle-(q\varphi\int_0^tu(s,x)ds)|_{l_1}^{l_2}\\
\geq-&\frac{\lambda_p(\mathfrak{L}_{\Omega,J,-q}+h(x,0))-d\int_\Omega J(x-y)dy}{2}\langle\varphi,\int_0^tu(s,x)ds\rangle.
\end{align*}
Thus, given $T>0$ large enough, for any $t>T$, there holds
\[\langle u(t,x;u_0),\varphi\rangle
\geq-\frac{\lambda_p(\mathfrak{L}_{\Omega,J,-q}+h(x,0)-d\int_\Omega J(x-y)dy)}{2}\langle\varphi,\int_0^Tu(s,x)ds\rangle.\]
Furthermore,
\[\langle \int_T^tu(s,x;u_0)ds,\varphi\rangle
\geq-\frac{\lambda_p(\mathfrak{L}_{\Omega,J,-q}+h(x,0)-d\int_\Omega J(x-y)dy)}{2}(t-T)\langle\varphi,\int_0^Tu(s,x)ds\rangle,\]
which implies that
\begin{align*}
\langle u(t,x;u_0),\varphi\rangle
&\geq-\frac{\lambda_p(\mathfrak{L}_{\Omega,J,-q}+h(x,0)-d\int_\Omega J(x-y)dy)}{2}\langle \int_T^tu(s,x;u_0)ds,\varphi\rangle\\
&\geq[\frac{\lambda_p(\mathfrak{L}_{\Omega,J,-q}+h(x,0)-d\int_\Omega J(x-y)dy)}{2}]^2(t-T)\langle\varphi,\int_0^Tu(s,x)ds\rangle.
\end{align*}
Due to $\langle\varphi,\int_0^Tu(s,x)ds\rangle>0$, we have $\lim\limits_{t\to\infty}\langle u(t,x;u_0),\varphi\rangle=+\infty$. This contradicts  with $\|u(t,x;u_0)\|_\infty<\epsilon$ for all $t>0$. Hence, $U_0$ is unstable.\\
(2) Let $(\lambda_p(\mathfrak{L}_{\Omega,J,q}+h(x,0)-d\int_\Omega J(x-y)dy),\phi)$ be the principle eigen pair of
 \[d\int_\Omega J(x-y)[\varphi(y)-\varphi(x)]dy-q\varphi'(x)+h(x,0)\varphi(x)+\lambda\varphi(x)=0,\ \ \ x\in\Omega.\]
 Define $\bar{u}=Me^{-\frac{\lambda_p(\mathfrak{L}_{\Omega,J,q}+h(x,0)-d\int_\Omega J(x-y)dy)}{2}t}\phi(x)\geq0$ in $\bar{\Omega}$  where $M>0$ satisfying $M\phi(x)\geq u_0(x)$ in $\bar{\Omega}$. Due to $h(x,u)$ decreasing in $u$,  we have
\begin{align*}
 &\bar{u}_t-\{d\int_\Omega J(x-y)[\bar{u}(t,y)-\bar{u}(t,x)]dy-q\bar{u}_x+f(x,\bar{u})\}\\
 &\geq \bar{u}_t-\{d\int_\mathbb{R} J(x-y)[\bar{u}(t,y)-\bar{u}(t,x)]dy-q\bar{u}_x+\bar{u}h(x,0)\}\\
 &\geq -\frac{\lambda_p(\mathfrak{L}_{\Omega,J,q}+h(x,0)-d\int_\Omega J(x-y)dy)}{2}\bar{u}+\lambda_p(\mathfrak{L}_{\Omega,J,q}+h(x,0)-d\int_\Omega J(x-y)dy)\bar{u}\\
 & \geq \frac{\lambda_p(\mathfrak{L}_{\Omega,J,q}+h(x,0)-d\int_\Omega J(x-y)dy)}{2}\bar{u}\geq0.
\end{align*}
The comparison principle means $0<u(t,x;u_0)\leq \bar{u}$. As $t\to\infty$, we have $\lim\limits_{t\to\infty}u(t,x;u_0)=0$, namely, $U_0$  is asymptotically stable.
\end{proof}

\begin{theorem}\label{3theorem8}
 If $\lambda_p(\mathfrak{L}_{\Omega,J,q}+h(x,0)-d\int_\Omega J(x-y)dy)\geq0$, then \eqref{3a_14} has no nontrivial solution.
\end{theorem}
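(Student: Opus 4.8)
The plan is to argue by contradiction and to test the putative stationary solution against the principal eigenfunction of the \emph{adjoint} integro-differential operator. Put $a(x):=h(x,0)-d\int_\Omega J(x-y)\,dy\in C(\bar\Omega)\cap L^\infty(\Omega)$, so that (recall Remark~\ref{3remark1}) the hypothesis reads $\lambda_p:=\lambda_p(\mathfrak{L}_{\Omega,J,q}+a)\ge 0$. Suppose \eqref{3a_14} had a nontrivial solution $U$; by the results already established, $U\in C^1(\bar\Omega)$, $U(l_1)=0$, $U>0$ on $(l_1,l_2]$, and
\[
d\int_\Omega J(x-y)[U(y)-U(x)]\,dy-qU'(x)+h(x,U(x))U(x)=0\qquad\text{in }\Omega .
\]

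Since $q>0$, apply Proposition~\ref{3proposition2} with advection coefficient $-q<0$: it produces $\mu:=\lambda_p(\mathfrak{L}_{\Omega,J,-q}+a)$ and $\psi\in C^1(\Omega)\cap C(\bar\Omega)$ with $\psi>0$ on $[l_1,l_2)$, $\psi(l_2)=0$, solving the adjoint eigenvalue equation
\[
d\int_\Omega J(x-y)[\psi(y)-\psi(x)]\,dy+q\psi'(x)+h(x,0)\psi(x)+\mu\psi(x)=0\qquad\text{in }\Omega .
\]
One first checks that $\mu=\lambda_p$: take the forward principal eigenfunction $\varphi$ from Proposition~\ref{3proposition2} (advection $q>0$, hence $\varphi>0$ on $(l_1,l_2]$, $\varphi(l_1)=0$), multiply its equation by $\psi$ and the $\psi$-equation by $\varphi$, integrate over $\Omega$ and subtract. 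By Fubini's theorem and the symmetry of $J$ (assumption $\mathbf{(J1)}$) the two double integrals cancel; the advection terms telescope into $-q\int_\Omega(\varphi\psi)'\,dx=-q[\varphi\psi]_{l_1}^{l_2}=0$ since $\varphi(l_1)=0=\psi(l_2)$; what survives is $(\lambda_p-\mu)\int_\Omega\varphi\psi\,dx=0$, and $\int_\Omega\varphi\psi\,dx>0$ because $\varphi\psi>0$ on $(l_1,l_2)$. (If one also assumes $J$ has compact support, Proposition~\ref{3proposition1}(3) gives this directly.)

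The decisive step is the pairing of $U$ with $\psi$: multiply the equation for $U$ by $\psi$, the equation for $\psi$ by $U$, integrate over $\Omega$, and subtract. Exactly as above, $\mathbf{(J1)}$ and Fubini annihilate the nonlocal terms, and the advection terms collapse to $-q[U\psi]_{l_1}^{l_2}=0$ because $U(l_1)=0=\psi(l_2)$; one is left with
\[
\int_\Omega \psi(x)\,U(x)\,\bigl[h(x,U(x))-h(x,0)-\mu\bigr]\,dx=0 .
\]
But $\psi U>0$ on $(l_1,l_2)$, the monotonicity of $h(x,\cdot)$ together with $U>0$ on $(l_1,l_2]$ gives $h(x,U(x))<h(x,0)$ there, and $\mu=\lambda_p\ge 0$; hence the integrand is strictly negative on $(l_1,l_2)$ and the integral is strictly negative — a contradiction. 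Thus \eqref{3a_14} has no nontrivial solution, and the Dirichlet problem \eqref{3a_15} is treated analogously.

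I expect the one genuinely delicate point to be the identity $\mu=\lambda_p$ of the forward and adjoint generalized principal eigenvalues on the bounded interval $\Omega$ without assuming that $J$ has compact support; the short Fubini/integration-by-parts computation above settles it precisely because Proposition~\ref{3proposition2} supplies genuine $C^1(\Omega)\cap C(\bar\Omega)$ eigenfunctions with the required one-sided boundary values, while everything else (the cancellations, the integration by parts, the $C^1$ regularity of $U$) is routine. An alternative that avoids the adjoint altogether is to observe that for $0<\delta<1$ the function $\delta U$ is a strict subsolution of \eqref{3a_14} and, since $\lambda_p\ge 0$, $\epsilon\varphi$ is a supersolution for small $\epsilon>0$; choosing $\delta U\le\epsilon\varphi$ and running the monotone iteration traps the solution between them, producing a nontrivial stationary solution $\le\epsilon\varphi\not\equiv U$, which contradicts the uniqueness in Theorem~\ref{3theorem6}.
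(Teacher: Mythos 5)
Your argument is correct, and it takes a genuinely different route from the paper. The paper splits into two cases: for $\lambda_p>0$ it argues dynamically (the trajectory from the constant upper solution decreases to a stationary limit, which must be $0$ by the asymptotic stability of $U_0$ from Lemma \ref{3lemma3}(2), and then the asymptotic stability of any nontrivial solution from Theorem \ref{3theorem7} forbids its existence); for $\lambda_p=0$ it pairs $U$ against the adjoint eigenfunction of the linearization \emph{at $U$} (coefficient $b(x)=h(x,U)$), so that the reaction terms cancel exactly and the whole identity reduces to the boundary term $(qU\psi)\vert_{l_1}^{l_2}$ --- which, since $U(l_1)=0$ and $\psi(l_2)=0$, actually vanishes rather than being strictly negative as claimed, so the paper's Case 2 as written does not visibly close. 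Your version pairs $U$ against the adjoint eigenfunction of the linearization \emph{at $0$} (coefficient $h(x,0)$), which leaves the residual $\int_\Omega U\psi\,[h(x,U)-h(x,0)-\mu]\,dx=0$ with a strictly negative integrand coming from the strict monotonicity of $h(x,\cdot)$ and $\mu\ge 0$; this yields the contradiction in one stroke, treats $\lambda_p>0$ and $\lambda_p=0$ uniformly without invoking the stability machinery, and in effect repairs the weak point of the paper's Case 2. Your Fubini/integration-by-parts verification that the forward and adjoint generalized principal eigenvalues coincide is also sound and, as you note, avoids the compact-support hypothesis of Proposition \ref{3proposition1}(3); the only hypotheses it uses ($\mathbf{(J1)}$, the attainment of eigenfunctions in Proposition \ref{3proposition2}, boundedness of $\varphi'$ and $\psi'$ read off from the eigenvalue equations) are all available. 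The alternative sub/supersolution sketch at the end is plausible but has more moving parts (one must justify $\delta U\le\epsilon\varphi$ near $l_1$ via $\varphi'(l_1)>0$ and then let $\epsilon\to0$); the eigenfunction pairing is the cleaner proof and is the one worth keeping.
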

\begin{proof}
{\bf{Case 1:}} $\lambda_p(\mathfrak{L}_{\Omega,J,q}+h(x,0)-d\int_\Omega J(x-y)dy)>0$. Since $\bar{u}=\max\{\max_{\bar{\Omega}} u_0(x),N\}$ is the upper  solution of \eqref{3a_14}, we deduce
\[0<u(t,x;u_0)\leq u(t,x;\bar{u})\leq\bar{u}=\max\{\max_{\bar{\Omega}} u_0(x),N\}.\]
Noting that $u(t,x;\bar{u})$ is decreasing in $t$, there exists $U(x)$ such that $\lim\limits_{t\to\infty} u(t,x;\bar{u})=U(x)$ uniformly on $\bar{\Omega}$ and $U(x)$ satisfies \eqref{3a_14}. If $U(x)\not\equiv0$, then $U(x)$ is a nontrivial solution of \eqref{3a_14}, and so, $U(x)$ is asymptotically stable by Theorem \ref{3theorem7}. This contradicts  with Lemma \ref{3lemma3}(2). 
Thus, $U(x)\equiv0$, namely,
\begin{align}\label{3a_18}
\lim\limits_{t\to\infty} u(t,x;u_0)=0.
 \end{align}
 Assume that there exists a nontrivial solution $V(x)$ of \eqref{3a_14}. Theorem \ref{3theorem7} implies that $\lim\limits_{t\to\infty} u(t,x;u_0)=V(x)$, which is a  contradiction with \eqref{3a_18}.\\
 {\bf{Case 2:}} $\lambda_p(\mathfrak{L}_{\Omega,J,q}+h(x,0)-d\int_\Omega J(x-y)dy)=0$. Assume that there exists a bounded nontrivial solution $U(x)\in(0,\max\{\max_{\bar{\Omega}} u_0(x),N\}]$ of \eqref{3a_14}. And then, $U(x)$ satisfies
 \[d\int_\Omega J(x-y)[U(y)-U(x)]dy-qU_x+Uh(x,U)=0,x\in{\Omega}\]
 Denote $b(x):=h(x,U)<h(x,0)$ in $\bar{\Omega}$. By Proposition \ref{3proposition1}(2), we have
 \[\lambda_p(\mathfrak{L}_{\Omega,J,q}+b(x)-d\int_\Omega J(x-y)dy)\geq\lambda_p(\mathfrak{L}_{\Omega,J,q}+h(x,0)-d\int_\Omega J(x-y)dy)=0.\]
Following the proof of \cite[Proposition 6.1]{Coville2021}, we deduce $\lambda_p(\mathfrak{L}_{\Omega,J,q}+b(x)-d\int_\Omega J(x-y)dy)\leq0$. Thus, it follows from Proposition \ref{3proposition1}(3) that \[\lambda_p(\mathfrak{L}_{\Omega,J,-q}+b(x)-d\int_\Omega J(x-y)dy)=\lambda_p(\mathfrak{L}_{\Omega,J,q}+b(x)-d\int_\Omega J(x-y)dy)=0.\]
By Proposition \ref{3proposition2} we can check that there is $\psi\in C^1(\Omega)\cap C(\overline{\Omega})$ satisfying $\psi>0$ in $\Omega$, $\psi(l_1)>0,\psi(l_2)=0$, such that
\begin{equation*}
d\int_{\Omega} J(x-y)[\psi(y)-\psi(x)]dy+q\psi'(x)+b(x)\psi(x)=0,\ \ \ x\in\Omega.
\end{equation*}
Define the inner product $\langle\cdot,\cdot\rangle$ in $L^2(\Omega)$. By calculation, we have
\begin{align*}
0=\langle&d\int_{\Omega} J(x-y)[\psi(y)-\psi(x)]dy+q\psi'(x)+b(x)\psi(x),U(x)\rangle\\
= \langle&d\int_{\Omega} J(x-y)[U(y)-U(x)]dy-qU'(x)+b(x)U(x),\psi(x)\rangle+(qU(x)\psi(x))|_{l_1}^{l_2}\\
= \langle&d[\int_\Omega J(x-y)[U(y)-U(x)]dy-qU'+h(x,U)U,\psi\rangle+(qU\psi)|_{l_1}^{l_2}\\
=(q&U\psi)|_{l_1}^{l_2}<0.
\end{align*}
Hence, we obtain a contradiction, and so the theorem holds.
\end{proof}
\begin{lemma}
 If $\lambda_p(\mathfrak{L}_{\Omega,J,q}+h(x,0)-d\int_\Omega J(x-y)dy)=0$,  then $U_0$  is asymptotically stable.
\end{lemma}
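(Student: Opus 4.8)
The plan is to bypass the exponentially decaying supersolution used in Lemma~\ref{3lemma3}(2), which degenerates when the rate $\lambda_p$ equals $0$, and instead to combine the monotone convergence of the flow started from a large constant with the nonexistence result already proved in Theorem~\ref{3theorem8} (Case~2).

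First I would fix $u_0$ satisfying \eqref{3a_2} and set $\bar{u}:=\max\{\max_{\bar{\Omega}}u_0(x),N\}$, which — as already observed in the proofs of Theorems~\ref{3theorem5} and \ref{3theorem8} — is a bounded upper solution of \eqref{3a_11}. By the estimate \eqref{3a_a1} we have $0<u(t,x;u_0)\le\bar{u}$ on $Q_\infty$, and since $u_0\le\bar{u}$ the comparison principle gives $0<u(t,x;u_0)\le u(t,x;\bar{u})$ on $Q_\infty$. As noted in the proof of Theorem~\ref{3theorem8}, $u(t,x;\bar{u})$ is nonincreasing in $t$ and uniformly bounded, hence converges pointwise and, by Dini's theorem, uniformly on $\bar{\Omega}$ to some $U\in C(\bar{\Omega})$ with $0\le U\le\bar{u}$ and $U(l_1)=0$; passing to the limit in the equation shows that $U$ solves \eqref{3a_14}.

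Next, the hypothesis $\lambda_p(\mathfrak{L}_{\Omega,J,q}+h(x,0)-d\int_\Omega J(x-y)dy)=0$ together with Theorem~\ref{3theorem8} (Case~2) forces $U\equiv0$. Consequently $0<u(t,x;u_0)\le u(t,x;\bar{u})\to0$ uniformly on $\bar{\Omega}$ as $t\to\infty$, i.e.\ $\|u(t,\cdot;u_0)-U_0\|_\infty\to0$ for every admissible $u_0$, which is exactly the asymptotic stability of $U_0$ in the sense of the definition in Section~\ref{3a_part3}.

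I expect no serious obstacle, since the heavy lifting is Theorem~\ref{3theorem8}; the only points requiring care are (i) verifying that $u(t,\cdot;\bar{u})$ is a legitimate supersolution for the comparison with $u(t,\cdot;u_0)$ (it solves the equation, $u(t,l_1;\bar{u})=0\ge0$, and $u(0,\cdot;\bar{u})=\bar{u}\ge u_0$) and that its monotone limit lies in the class $0\le U\le\max\{\max_{\bar{\Omega}}u_0,N\}$, $U(l_1)=0$ covered by Theorem~\ref{3theorem8}; and (ii) observing that, in contrast with the case $\lambda_p>0$, this argument yields only attraction to $0$ with no decay rate — which is all that the paper's notion of asymptotic stability requires.
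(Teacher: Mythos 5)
Your proof is correct and follows essentially the same route as the paper's: start the flow from a time-independent upper solution, use monotonicity in $t$ to obtain uniform convergence to a stationary solution of \eqref{3a_14}, and invoke Theorem \ref{3theorem8} to conclude that this limit must be $U_0\equiv 0$, so every orbit below it is squeezed to zero. The only difference is the choice of supersolution --- you take the constant $\max\{\max_{\bar{\Omega}}u_0,N\}$ while the paper takes $M\varphi$ with $\varphi$ the principal eigenfunction; your choice is if anything more robust, since $M\varphi\geq u_0$ requires comparing two functions that both vanish at $l_1$, whereas the constant dominates $u_0$ trivially.
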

\begin{proof}
Let $\varphi>0$ in $\Omega$ be the principal eigenfunction with $\lambda_p(\mathfrak{L}_{\Omega,J,q}+h(x,0)-d\int_\Omega J(x-y)dy)=0$. Choose a constant $M>0$ such that $\bar{u}:=M\varphi\geq u_0(x)$. Similar to Theorem \ref{3theorem7}, $0<u(t,x;u_0)\leq u(t,x;\bar{u})\leq\bar{u}$ and there exists $U(x)$ such that $\lim\limits_{t\to\infty} u(t,x;\bar{u})=U(x)$ uniformly on $\bar{\Omega}$ and $U(x)$ satisfies \eqref{3a_14}. Combining with Theorem \ref{3theorem8}, $U(x)\equiv0$. Then $\lim\limits_{t\to\infty} u(t,x;u_0)=0$. The proof is completed.
\end{proof}
In summary, the results about the existence/nonexistence, uniqueness  of nontrivial solutions of \eqref{3a_14} and  the stability of nontrivial/trivial solution are as follows.
\begin{theorem}\label{3theorem9}
For  \eqref{3a_14}, the following results hold:
\begin{enumerate}[(1)]
 \item If $\lambda_p(\mathfrak{L}_{\Omega,J,q}+h(x,0)-d\int_\Omega J(x-y)dy)<0$, then $U_0$ is unstable. Moreover, \eqref{3a_14} admits a unique bounded nontrivial solution $U(x)\in C^1(\bar{\Omega})$ satisfying $U(x)>0$ in $(l_1,l_2]$,  which is asymptotically stable.
 \item If $\lambda_p(\mathfrak{L}_{\Omega,J,q}+h(x,0)-d\int_\Omega J(x-y)dy)\geq0$,  then $U_0$ is asymptotically stable and \eqref{3a_14} has no nontrivial solution.
\end{enumerate}
\end{theorem}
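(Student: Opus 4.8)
This statement is a synthesis of the results already established in this subsection, so the plan is simply to assemble them according to the sign of
$\lambda_p:=\lambda_p(\mathfrak{L}_{\Omega,J,q}+h(x,0)-d\int_\Omega J(x-y)dy)$. No new argument is needed beyond quoting Theorems \ref{3theorem5}--\ref{3theorem8}, Lemma \ref{3lemma3}, and the lemma stated immediately before the present theorem.

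First I would treat the case $\lambda_p<0$. Lemma \ref{3lemma3}(1) gives that the trivial solution $U_0\equiv 0$ is unstable. Theorem \ref{3theorem5} produces a continuous, bounded, nontrivial solution of \eqref{3a_14}; the theorem following it (via the strong maximum principle, Lemma \ref{3lemma5}) upgrades any nontrivial solution to $C^1(\bar\Omega)$ and shows it is strictly positive on $(l_1,l_2]$; Theorem \ref{3theorem6} shows the nontrivial solution is unique; and Theorem \ref{3theorem7} shows that this unique nontrivial solution is asymptotically stable. Chaining these together yields part (1).

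Next I would treat the case $\lambda_p\ge 0$, splitting further. When $\lambda_p>0$, Theorem \ref{3theorem8} (Case 1) shows that \eqref{3a_14} has no nontrivial solution, and Lemma \ref{3lemma3}(2) shows that $U_0$ is asymptotically stable. When $\lambda_p=0$, Theorem \ref{3theorem8} (Case 2) again gives nonexistence of a nontrivial solution, while the lemma stated just before the present theorem gives the asymptotic stability of $U_0$. Together these establish part (2), which completes the proof.

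The only point that is not pure bookkeeping is the borderline value $\lambda_p=0$: there the comparison/monotonicity argument used for $\lambda_p>0$ breaks down, so one must instead invoke the Green's-identity computation carried out in Theorem \ref{3theorem8}, which itself relies on the attainability of the dual principal eigenvalue from Proposition \ref{3proposition2}; and the asymptotic stability of $U_0$ in this case requires the dedicated lemma rather than Lemma \ref{3lemma3}. Everything else is a direct citation of the earlier results, so I expect no substantive obstacle in writing out the full proof.
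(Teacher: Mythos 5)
Your proposal is correct and matches the paper exactly: the paper presents this theorem as a summary ("In summary, the results... are as follows") with no separate proof, precisely because it is assembled from Theorems \ref{3theorem5}--\ref{3theorem8}, Lemma \ref{3lemma3}, and the lemma immediately preceding it, just as you describe. Your observation that the borderline case $\lambda_p=0$ requires the dedicated lemma rather than Lemma \ref{3lemma3}(2) is the right reading of how the paper's pieces fit together.
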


 For \eqref{3a_15}, we can also have the same analysis, which looks like the proofs of \eqref{3a_14}, so we still have the following theorem:
\begin{theorem}\label{3theorem10}
For \eqref{3a_15}, the following assertions are valid:
\begin{enumerate}[(1)]
 \item If $\lambda_p(\mathfrak{L}_{\Omega,J,q}+h(x,0))+d<0$, then $U_0$ is unstable. Moreover, \eqref{3a_15} admits a unique bounded nontrivial solution $U(x)\in C^1(\bar{\Omega})$ satisfying $U(x)>0$ in $(l_1,l_2]$, which is asymptotically stable.
 \item If  $\lambda_p(\mathfrak{L}_{\Omega,J,q}+h(x,0))+d\geq0$,  then $U_0$ is asymptotically stable and \eqref{3a_15} has no nontrivial solution.
\end{enumerate}
\end{theorem}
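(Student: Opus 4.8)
The plan is to transcribe, essentially line by line, the analysis carried out for the Neumann-type stationary problem \eqref{3a_14} in Theorems \ref{3theorem5}--\ref{3theorem9} and Lemma \ref{3lemma3}, with the dispersal operator $d\int_\Omega J(x-y)[u(y)-u(x)]\,dy$ replaced throughout by the Dirichlet one $d\bigl[\int_\Omega J(x-y)u(y)\,dy-u(x)\bigr]$. The book-keeping that makes the two problems genuinely parallel is the following: by Remark \ref{3remark1} with $b(x)=h(x,0)$, the linearization of \eqref{3a_15} at $u\equiv0$,
\[
d\Bigl[\int_\Omega J(x-y)\varphi(y)\,dy-\varphi(x)\Bigr]-q\varphi'(x)+h(x,0)\varphi(x)+\lambda\varphi(x)=0,\qquad x\in\Omega,
\]
is exactly the eigenvalue problem \eqref{3a_12} with $a(x)=h(x,0)-d$, and since shifting the zero-order coefficient by a constant shifts the generalized principal eigenvalue by the opposite constant, one has $\lambda_p(\mathfrak{L}_{\Omega,J,q}+h(x,0)-d)=\lambda_p(\mathfrak{L}_{\Omega,J,q}+h(x,0))+d$, which is precisely the threshold quantity in the statement. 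Consequently Propositions \ref{3proposition1}--\ref{3proposition2} apply verbatim with $a=h(x,0)-d$, Proposition \ref{3proposition2} again furnishes a genuine principal eigenpair with $\varphi(l_1)=0$, $\varphi(l_2)>0$ for the operator indexed by $q>0$ (resp.\ $\varphi(l_2)=0$, $\varphi(l_1)>0$ for the one indexed by $-q$), and the comparison principle and global well-posedness for the semiflow of \eqref{3a_1} are supplied by Lemma \ref{3lemma2} and Theorem \ref{3theorem4}.

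With these translations in hand I would first record the stationary strong maximum principle for the Dirichlet operator: if $u\in C^1(\bar\Omega)$, $u\ge0$, $u(l_1)\ge0$, $q>0$ and $d[\int_\Omega J(x-y)u(y)\,dy-u(x)]-qu'(x)+c(x)u(x)\le0$ in $\Omega$, then $u>0$ in $(l_1,l_2]$; its proof copies that of Lemma \ref{3lemma5} word for word, since at a zero $x_0$ of $u$ the term $-u(x_0)$ vanishes anyway. For part (1), assuming $\lambda_p(\mathfrak{L}_{\Omega,J,q}+h(x,0))+d<0$, I would mimic Theorem \ref{3theorem5}: $\bar u\equiv\max\{\max_{\bar\Omega}u_0,N\}$ is an upper solution of \eqref{3a_15} because $\int_\Omega J\le1$ and $h(x,\bar u)<0$, while $\underline u=\epsilon\varphi$ (with $\varphi$ the principal eigenfunction above and $\epsilon$ small enough that $-\lambda_p(\mathfrak{L}_{\Omega,J,q}+h(x,0)-d)-h(x,0)+h(x,\epsilon\varphi)\ge0$) is a lower solution. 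Monotonicity of the trapped orbits and Dini's theorem then yield a continuous, bounded nontrivial solution, and the $C^1$-regularity together with positivity on $(l_1,l_2]$ follow exactly as in the two theorems after Theorem \ref{3theorem5}, now using the Dirichlet stationary maximum principle; asymptotic stability of this solution and the (in)stability dichotomy for $U_0\equiv0$ are the verbatim analogues of Theorem \ref{3theorem7} and Lemma \ref{3lemma3}.

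The step that needs genuine care is the uniqueness claim, which I would prove along the lines of Theorem \ref{3theorem6}: given two nontrivial solutions $u_1,u_2$, set $v=\sup\{u_1,u_2\}$, $b(x)=h(x,v)$, and work with the one-sided generalized eigenvalue $\lambda_p''(\mathfrak{L}_{\Omega,J,-q}-d+b)$, which is $\le0$ by \cite[Proposition 6.1]{Coville2021}. As in Theorem \ref{3theorem6}, extract (from approximate eigenfunctions when this value is $0$, directly when it is negative) a nonnegative nontrivial $\varphi$ with $\varphi(l_2)=0$, a sequence $\varphi_n\to\varphi$ in $C^1(\bar\Omega)$ and $\lambda_n\downarrow0$ solving the corresponding differential inequality; multiply by $u_1$, integrate over $\Omega$, and use the symmetry of $J$ from \textbf{(J1)} to transfer the self-adjoint nonlocal part onto $u_1$. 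Since $\varphi_n(l_2)=0$ and $u_1(l_1)=0$, the boundary term generated by integrating the advection part by parts vanishes, and one is left with $\mathcal R_n=\int_\Omega[h(x,v)-h(x,u_1)]\varphi_nu_1\,dx+\lambda_n\int_\Omega\varphi_nu_1\,dx$, which has nonnegative $\liminf$ but, because $h(x,\cdot)$ is strictly decreasing and $v\ge,\not\equiv u_1$ on $(l_1,l_2]$, strictly negative $\limsup$; this contradiction forces $u_1\equiv u_2$. I expect this passage — producing the approximate eigenfunctions and pushing through the uniform $C^1$ bounds in the absence of compactness of the nonlocal operator — to be the main obstacle, exactly as it was for \eqref{3a_14}.

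Finally, for part (2): when $\lambda_p(\mathfrak{L}_{\Omega,J,q}+h(x,0))+d>0$, argue as in Case~1 of Theorem \ref{3theorem8}, namely the decreasing orbit $u(t,\cdot;\bar u)$ converges to a stationary $U$ which cannot be nontrivial (else it is asymptotically stable by the analogue of Theorem \ref{3theorem7}, contradicting asymptotic stability of $U_0$ from the analogue of Lemma \ref{3lemma3}(2)), so $u(t,\cdot;u_0)\to0$ and no nontrivial solution exists; when $\lambda_p(\mathfrak{L}_{\Omega,J,q}+h(x,0))+d=0$, argue as in Case~2, where a bounded nontrivial $U$ gives $b(x)=h(x,U)<h(x,0)$, hence by Proposition \ref{3proposition1}(2)--(3) and \cite[Proposition 6.1]{Coville2021} the eigenvalue $\lambda_p(\mathfrak{L}_{\Omega,J,-q}+b-d)$ is again $0$, Proposition \ref{3proposition2} supplies the adjoint eigenfunction $\psi$ with $\psi(l_1)>0$, $\psi(l_2)=0$, and pairing $\psi$ against $U$ in $L^2(\Omega)$ and integrating by parts reduces the identity to its boundary contribution, which together with $U>0$ on $(l_1,l_2]$ and the strict inequality $b<h(x,0)$ yields the required contradiction exactly as in the proof of Theorem \ref{3theorem8}. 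Collecting these statements gives assertions (1) and (2).
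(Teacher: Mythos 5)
Your proposal is correct and follows essentially the same route as the paper, which itself only asserts that Theorem \ref{3theorem10} is obtained by repeating the analysis of Theorems \ref{3theorem5}--\ref{3theorem9} and Lemma \ref{3lemma3} for the Dirichlet operator. In fact you supply more detail than the paper does, and your key book-keeping observation --- that the linearization of \eqref{3a_15} at $0$ is \eqref{3a_12} with $a(x)=h(x,0)-d$, so that $\lambda_p(\mathfrak{L}_{\Omega,J,q}+h(x,0)-d)=\lambda_p(\mathfrak{L}_{\Omega,J,q}+h(x,0))+d$ is exactly the threshold in the statement --- is precisely what the paper leaves implicit via Remark \ref{3remark1}.
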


\section{The sharp criteria for persistence or extinction \label{3a_part4}}
\setcounter{equation}{0}
\begin{theorem}\label{3theorem12}
Assume further that $h(x,0)\geq d\int_\Omega J(x-y)dy$ on $\Omega$.  Then for \eqref{3a_11}, there is $q_*>0$ such that the persistence occurs if $0< q<q_*$ and the extinction occurs if $q\geq q_*$.
\end{theorem}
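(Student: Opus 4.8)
The whole statement reduces to the sign of a principal eigenvalue. Write $a(x):=h(x,0)-d\int_\Omega J(x-y)\,dy$, so the extra hypothesis says exactly $a\ge0$ on $\Omega$, and set $\Lambda(q):=\lambda_p(\mathfrak{L}_{\Omega,J,q}+a)$ for $q>0$. Since \eqref{3a_14} is the stationary problem for \eqref{3a_11}, Theorem \ref{3theorem9} tells us that $\Lambda(q)<0$ forces persistence (a unique positive steady state attracting every positive solution) while $\Lambda(q)\ge0$ forces extinction ($u(t,\cdot;u_0)\to0$). Thus it suffices to produce $q_*\in(0,\infty)$ with $\Lambda<0$ on $(0,q_*)$ and $\Lambda\ge0$ on $[q_*,\infty)$. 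The plan is to establish: (i) $\Lambda$ is continuous on $(0,\infty)$, which is Proposition \ref{3proposition1}(4); (ii) $\Lambda$ is strictly increasing on $(0,\infty)$, which is Proposition \ref{3proposition3}, applicable precisely because $a\ge0$ (if $J$ is not compactly supported, one first approximates it by compactly supported kernels, for which the corresponding zero-order term is still $\ge0$); (iii) $\Lambda(q)<0$ for all small $q>0$; and (iv) $\Lambda(q)\ge0$ for all large $q$. Granting these, $q_*:=\sup\{q>0:\Lambda(q)<0\}$ is positive and finite, $\Lambda(q_*)=0$ by (i), and the desired dichotomy follows from (ii).

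For (iii) I would invoke the characterization $\lambda_p=\lambda_p'$ of Proposition \ref{3proposition2}: it suffices, for each small $q>0$, to exhibit $\lambda<0$ and $\varphi\in C^1(\Omega)\cap C(\bar{\Omega})$ with $\varphi>0$ in $\Omega$, $\varphi(l_1)=0$, $\varphi(l_2)>0$ and $\mathfrak{L}_{\Omega,J,q}[\varphi(x)]+a(x)\varphi(x)+\lambda\varphi(x)\ge0$ on $\Omega$, for then $\Lambda(q)=\lambda_p'\le\lambda<0$. Take $\varphi(x)=x-l_1$ and $P(x):=d\int_\Omega J(x-y)(y-l_1)\,dy$, which is continuous and strictly positive on $\bar{\Omega}$ since $J(0)>0$. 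Using $a\ge0$, $0\le x-l_1\le l_2-l_1$, and $\lambda<0$,
\[
\mathfrak{L}_{\Omega,J,q}[\varphi(x)]+a(x)\varphi(x)+\lambda\varphi(x)=P(x)-q+(a(x)+\lambda)(x-l_1)\ \ge\ \min_{\bar{\Omega}}P-q+\lambda(l_2-l_1).
\]
Choosing $\lambda=-\min_{\bar{\Omega}}P/\big(2(l_2-l_1)\big)<0$ makes the right-hand side $\ge\tfrac12\min_{\bar{\Omega}}P-q$, which is $\ge0$ whenever $q\le\tfrac12\min_{\bar{\Omega}}P$; this gives (iii).

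Step (iv) is the main obstacle, and I would argue by contradiction using the genuine eigenpairs from Proposition \ref{3proposition2}. First, a constant test function in definition \eqref{3a_13} together with $\int_\Omega J\le1$ gives $\lambda_p(\mathfrak{L}_{\Omega,J,q})\ge-d$, and Proposition \ref{3proposition1}(2) then yields the uniform lower bound $\Lambda(q)\ge-d-\|a\|_\infty=:-C_0$. Suppose $\Lambda(q_n)<0$ along some sequence $q_n\to\infty$; by Proposition \ref{3proposition2} there is a principal eigenpair $(\Lambda(q_n),\varphi_n)$ with $\varphi_n(l_1)=0$, $\varphi_n>0$ in $\Omega$, normalized so $\max_{\bar{\Omega}}\varphi_n=1$. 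From the eigenvalue identity
\[
q_n\varphi_n'(x)=d\int_\Omega J(x-y)\varphi_n(y)\,dy+\big(a(x)+\Lambda(q_n)\big)\varphi_n(x),
\]
together with $0\le\varphi_n\le1$, $\int_\Omega J\le1$ and $-C_0\le\Lambda(q_n)<0$, one gets $\sup_\Omega|\varphi_n'|\le(d+\|a\|_\infty+C_0)/q_n$. Since $\varphi_n(l_1)=0$ and $\varphi_n(x)=\int_{l_1}^x\varphi_n'$, this forces $1=\max_{\bar{\Omega}}\varphi_n\le(l_2-l_1)\sup_\Omega|\varphi_n'|\to0$, a contradiction. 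Hence $\Lambda(q)\ge0$ for all large $q$, which completes (iv) and the proof. Beyond the compactly-supported-kernel point in (ii), the only care needed is in (iv): one must use that the eigenfunctions from Proposition \ref{3proposition2} are genuinely $C^1$ on $\Omega$ and vanish at $l_1$, so that the bound on $\varphi_n'$ from the equation really controls $\max_{\bar{\Omega}}\varphi_n$.
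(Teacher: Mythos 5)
Your proposal is correct and follows the same overall skeleton as the paper: reduce the dichotomy to the sign of $\lambda_p(\mathfrak{L}_{\Omega,J,q}+h(x,0)-d\int_\Omega J(x-y)dy)$ via Theorem \ref{3theorem9}, then combine continuity (Proposition \ref{3proposition1}(4)) and monotonicity (Proposition \ref{3proposition3}, which is where $a\ge 0$ enters) with negativity for small $q$ and nonnegativity for large $q$. Where you genuinely diverge is in how the endpoint signs are obtained. The paper integrates the principal eigen-equation over $\Omega$, uses the symmetry of $J$ to kill the dispersal term, and reads off the identity $-\lambda_p\int_\Omega\varphi\,dx=\int_\Omega h(x,0)\varphi\,dx-q\varphi(l_2)$, from which it concludes $\lambda_p<0$ as $q\to 0$ and $\lambda_p>0$ for $q\gg 1$; this is shorter but leaves implicit a control on the normalization-dependent ratio $\varphi(l_2)/\int_\Omega\varphi\,dx$ (which itself depends on $q$), so the large-$q$ conclusion is stated rather than fully argued. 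Your replacement arguments are more self-contained: the explicit test function $x-l_1$ in the $\lambda_p'$ characterization of Proposition \ref{3proposition2} gives a clean quantitative bound $\Lambda(q)<0$ for $q\le\frac12\min_{\bar\Omega}P$, and the gradient estimate $\sup_\Omega|\varphi_n'|\le C/q_n$ combined with $\varphi_n(l_1)=0$ rules out $\Lambda(q_n)<0$ along $q_n\to\infty$ without any normalization issue. Both proofs share the caveat that Proposition \ref{3proposition3} is stated only for compactly supported $J$ while the theorem does not assume this; you at least flag it (your proposed approximation by compactly supported kernels would itself need a continuity-in-$J$ statement for $\lambda_p$ that the paper does not provide, but the paper's own proof invokes Proposition \ref{3proposition3} with no comment at all). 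In short: same strategy, but your treatment of the small-$q$ and large-$q$ regimes is different from and tighter than the paper's.
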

\begin{proof}
Without loss of generality, we assume $q>0$. Denote the principal eigen pair $(\lambda_p(\mathfrak{L}_{\Omega,J,q}+h(x,0)-d\int_\Omega J(x-y)dy),\varphi)$ satisfying $\varphi>0$ in $\Omega$, $\varphi(l_1)=0,\varphi(l_2)>0$ and
\[d\int_\Omega J(x-y)[\varphi(y)-\varphi(x)]dy-q\varphi'(x)+h(x,0)\varphi(x)+\lambda_p(\mathfrak{L}_{\Omega,J,q}+h(x,0)-d\int_\Omega J(x-y)dy)\varphi(x)=0,\ \ \ x\in\Omega.\]
Integrating both sides of the above equations in $\Omega$, we deduce
\begin{align*}
-&\lambda_p(\mathfrak{L}_{\Omega,J,q}+h(x,0)-d\int_\Omega J(x-y)dy)\int_\Omega \varphi(x) dx\\
&=d\int_\Omega \int_\Omega J(x-y)[\varphi(y)-\varphi(x)]dydx-q\varphi|_{l_1}^{l_2}+\int_\Omega h(x,0)\varphi(x)dx\\
&=\int_\Omega h(x,0)\varphi(x)dx-q\varphi(l_2).
\end{align*}
Since $\varphi(x)$ and $h(x,0)$ are bounded and positive, $\lambda_p(\mathfrak{L}_{\Omega,J,q}+h(x,0)-d\int_\Omega J(x-y)dy)<0$ for $q\to0$ and $\lambda_p(\mathfrak{L}_{\Omega,J,q}+h(x,0)-d\int_\Omega J(x-y)dy)>0$ for $q\gg1$. Employing Proposition \ref{3proposition1}(4) and Proposition \ref{3proposition3}, since $h(x,0)-d\int_\Omega J(x-y)dy\geq0$,  we can find $q_*>0$ such that $\lambda_p(\mathfrak{L}_{\Omega,J,q}+h(x,0)-d\int_\Omega J(x-y)dy)<0$ if $0< q<q_*$ and $\lambda_p(\mathfrak{L}_{\Omega,J,q}+h(x,0)-d\int_\Omega J(x-y)dy)\geq0$ if $q\geq q_*$. Combining with Theorem \ref{3theorem9}, the theorem holds.
\end{proof}

Define
\[\bar{h}=\sup_{\bar{\Omega}}h(x,0)>0, \underline{h}=\inf_{\bar{\Omega}}h(x,0);\]
\[q^*=\inf_{\mu>0}\frac{d\int_{\mathbb{R}}J(z)e^{-\mu z}dz-d+\bar{h}}{\mu}.\]
By the assumption {\bf{(J0)}} and the definition of $h(x,0)$, we can obtain from \cite{Shen2010} that $q^*\in(0,+\infty)$  and there is $\mu^*>0$ such that $$d\int_{\mathbb{R}}J(z)e^{-\mu^* z}dz-d+\bar{h}=\mu^*q^*.$$

\begin{theorem}\label{3theorem11}
Assume further that $\inf_{\bar{\Omega}}h(x,0)> d$.  Then for \eqref{3a_1}, the persistence  occurs if $0< q<q^{**}$ and the extinction occurs if $q\geq q^{**}$.
\end{theorem}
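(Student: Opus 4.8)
The plan is to reduce the persistence/extinction alternative for \eqref{3a_1} to the sign of one monotone, continuous function of $q$, and then to pin that sign down by two one‑sided eigenvalue estimates. Set $\Lambda(q):=\lambda_p\big(\mathfrak{L}_{\Omega,J,q}+(h(x,0)-d)\big)$; since $d$ is a constant, the additive shift property of the generalized principal eigenvalue gives $\Lambda(q)=\lambda_p(\mathfrak{L}_{\Omega,J,q}+h(x,0))+d$, which is exactly the threshold quantity appearing in Theorem \ref{3theorem10}. Hence, by Theorem \ref{3theorem10} applied to the stationary problem \eqref{3a_15} (together with the definition of persistence and extinction), for \eqref{3a_1} the population persists when $\Lambda(q)<0$ and goes extinct when $\Lambda(q)\ge 0$. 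Assume $q>0$, the case $q<0$ being symmetric by Proposition \ref{3proposition1}(3). The hypothesis $\inf_{\bar{\Omega}}h(x,0)>d$ makes the coefficient $a(x):=h(x,0)-d$ continuous, bounded and strictly positive on $\bar{\Omega}$, so Proposition \ref{3proposition3} shows that $\Lambda$ is strictly increasing on $(0,\infty)$ and Proposition \ref{3proposition1}(4) shows that $\Lambda$ is continuous on $(0,\infty)$. Consequently, once we prove $\Lambda(q)<0$ for all small $q>0$ and $\Lambda(q)\ge 0$ for all large $q$, the value $q^{**}:=\sup\{q>0:\Lambda(q)<0\}=\inf\{q>0:\Lambda(q)\ge 0\}$ is a well‑defined positive and finite threshold, and Theorem \ref{3theorem10} yields persistence for $0<q<q^{**}$ and extinction for $q\ge q^{**}$ (the endpoint using continuity of $\Lambda$).

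For small $q>0$ I would test against $\varphi(x)=x-l_1$, which belongs to $C^1(\Omega)\cap C(\bar{\Omega})$, is positive in $\Omega$, vanishes at $l_1$ and is positive at $l_2$ — exactly the admissible class in the $\lambda_p'$‑characterization of Proposition \ref{3proposition2} for $q>0$. A direct computation gives
\[
\mathfrak{L}_{\Omega,J,q}[\varphi](x)+(h(x,0)-d)\varphi(x)=g(x)-q,\qquad g(x):=d\int_\Omega J(x-y)(y-l_1)\,dy+(h(x,0)-d)(x-l_1).
\]
Since $J(0)>0$ with $J$ continuous and nonnegative, $g(l_1)=d\int_{l_1}^{l_2}J(l_1-y)(y-l_1)\,dy>0$, while for $x\in(l_1,l_2]$ the first summand is $\ge 0$ and the second is $\ge(\inf_{\bar{\Omega}}h(x,0)-d)(x-l_1)>0$; hence $g$ is continuous and strictly positive on $\bar{\Omega}$, and $m_0:=\min_{\bar{\Omega}}g>0$. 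For $0<q<m_0$ we then have $\mathfrak{L}_{\Omega,J,q}[\varphi]+(h(x,0)-d)\varphi\ge m_0-q\ge\varepsilon(l_2-l_1)\ge\varepsilon\varphi$ with $\varepsilon:=(m_0-q)/(l_2-l_1)>0$, so $\lambda=-\varepsilon$ is admissible in the infimum defining $\lambda_p'(\mathfrak{L}_{\Omega,J,q}+h(x,0)-d)$; hence $\Lambda(q)=\lambda_p'\le-\varepsilon<0$ and \eqref{3a_1} persists.

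For large $q$ I would test against the increasing function $\varphi(x)=e^{\mu^* x}$, with $\mu^*>0$ as provided before the statement. Assumption \textbf{(J0)} guarantees $\kappa:=\int_{\mathbb{R}}J(z)e^{-\mu^* z}\,dz<\infty$, and the substitution $z=x-y$ gives $d\int_\Omega J(x-y)\varphi(y)\,dy\le d\int_{\mathbb{R}}J(x-y)e^{\mu^* y}\,dy=d\kappa\,\varphi(x)$. Since $\varphi'=\mu^*\varphi$ and $h(x,0)\le\bar{h}$,
\[
\mathfrak{L}_{\Omega,J,q}[\varphi](x)+(h(x,0)-d)\varphi(x)\le\big(d\kappa-q\mu^*+\bar{h}-d\big)\varphi(x)=\mu^*(q^*-q)\,\varphi(x),
\]
where the last equality is the tangency identity $d\kappa-d+\bar{h}=\mu^* q^*$. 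For every $q\ge q^*$ the right‑hand side is $\le 0$ with $\varphi>0$ in $\Omega$, so the sup‑characterization \eqref{3a_13} (with $\lambda=0$) gives $\Lambda(q)\ge 0$, i.e.\ \eqref{3a_1} goes extinct; in particular $0<q^{**}\le q^*<\infty$ and the theorem follows from the first paragraph.

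The only genuine obstacle is the strict monotonicity of $\Lambda$ in $q$, which is what upgrades the two one‑sided estimates into a single sharp threshold; this is precisely where the hypothesis $\inf_{\bar{\Omega}}h(x,0)>d$ is used twice — it makes $a(x)=h(x,0)-d\ge 0$ so that Proposition \ref{3proposition3} applies, and it is also what forces $g>0$ up to the boundary point $l_1$ in the subsolution estimate. If $J$ is not compactly supported, the monotonicity of $\lambda_p$ in $|q|$ is first obtained for compactly supported kernels and then extended by approximation using Proposition \ref{3proposition1}(4); the remaining points (continuity of $x\mapsto\int_\Omega J(x-y)(y-l_1)\,dy$, convergence of $\int_{\mathbb{R}}J(z)e^{-\mu^* z}\,dz$, and the admissibility checks for the two test functions) are routine.
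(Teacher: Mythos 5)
Your proposal is correct and follows essentially the same route as the paper: bound $\lambda_p(\mathfrak{L}_{\Omega,J,q}+h(x,0))+d$ strictly below $0$ for small $q>0$ and above $0$ for $q\geq q^*$, then combine the continuity and strict monotonicity of the principal eigenvalue in $q$ (Propositions \ref{3proposition1}(4) and \ref{3proposition3}, the latter applicable since $h(x,0)-d>0$) with Theorem \ref{3theorem10} to obtain the sharp threshold $q^{**}$. The only cosmetic differences are in the test functions: the paper uses $e^{\mu^* x}$ for both one-sided estimates (passing through $\mathbb{R}$ and Proposition \ref{3proposition1}(1)(2) on the extinction side, and the constant $\underline{h}$ on the persistence side), whereas you apply $e^{\mu^* x}$ directly on $\Omega$ for extinction and the linear function $x-l_1$ in the $\lambda_p'$-characterization of Proposition \ref{3proposition2} for persistence.
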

\begin{proof}
Firstly, set $\bar{\lambda}=d\int_{\mathbb{R}}J(z)e^{-\mu^* z}dz-d+\bar{h}-q\mu^*$ and $\phi_1(x)=e^{\mu^*x}$ in $\mathbb{R}$. Then $(\bar{\lambda},\phi_1)$ satisfies
\begin{align*}
 d\int_\mathbb{R} J(x-y)\phi_1(y)dy-q\phi'_1+\bar{h}\phi_1=(\bar{\lambda}+d)\phi_1, \ \ \mbox{on} \  \mathbb{R}.
\end{align*}
By the definition of \eqref{3a_13}, we have  $\lambda_p(\mathfrak{L}_{\mathbb{R},J,q}+\bar{h})\geq-(\bar{\lambda}+d)$. In terms of Proposition \ref{3proposition1}(1)(2), there holds
\[\lambda_p(\mathfrak{L}_{\mathbb{R},J,q}+\bar{h})\leq \lambda_p(\mathfrak{L}_{\mathbb{R},J,q}+h(x,0))\leq \lambda_p(\mathfrak{L}_{\Omega,J,q}+h(x,0)).\]
Thus,
\[\lambda_p(\mathfrak{L}_{\Omega,J,q}+h(x,0))+d \geq -\bar{\lambda}=(q-q^*)\mu^*.\]
When $q \geq q^*$, we deduce $\lambda_p(\mathfrak{L}_{\Omega,J,q}+h(x,0))+d\geq0$.

Next, set $\underline{\lambda}=d\int_{\mathbb{R}}J(z)e^{-\mu^* z}dz-d+\underline{h}-q\mu^*$ and $\phi_2(x)=e^{\mu^*x}$ in $\Omega$. Then $(\underline{\lambda},\phi_2)$ satisfies
\begin{align*}
 d\int_\Omega J(x-y)\phi_2(y)dy-q\phi'_2+\underline{h}\phi_2&=(\underline{\lambda}+d)\phi_2-d\int_{\mathbb{R}\backslash \Omega} J(x-y)e^{\mu^*y }dy\\
 &\geq (\underline{\lambda}+d-d\int_{\mathbb{R}}J(z)e^{-\mu^* z}dz)\phi_2, \ \ \ \ \ \ \mbox{on} \  \Omega.
\end{align*}
According to Proposition \ref{3proposition2}, $\lambda_p(\mathfrak{L}_{\Omega,J,q}+\underline{h})\leq-(\underline{\lambda}+d-d\int_{\mathbb{R}}J(z)e^{-\mu^* z}dz)$. Since $\lambda_p(\mathfrak{L}_{\Omega,J,q}+h(x,0))\leq\lambda_p(\mathfrak{L}_{\Omega,J,q}+\underline{h})$,
\[\lambda_p(\mathfrak{L}_{\Omega,J,q}+h(x,0))+d\leq -(\underline{\lambda}-d\int_{\mathbb{R}}J(z)e^{-\mu^* z}dz)=q\mu^*+d-\underline{h}.\]
When $0< q < \frac{\underline{h}-d}{\mu^*}<q^*$, we have $\lambda_p(\mathfrak{L}_{\Omega,J,q}+h(x,0))+d<0$. Proposition \ref{3proposition1}(4) and Proposition \ref{3proposition3} imply that there is $q^{**}>0$ such that $\lambda_p(\mathfrak{L}_{\Omega,J,q}+h(x,0))+d<0$ if $0< q<q^{**}$ and $\lambda_p(\mathfrak{L}_{\Omega,J,q}+h(x,0))+d\geq0$ if $q\geq q^{**}$.

By Theorem \ref{3theorem10}, the solution $u(t,x;u_0)$ of  \eqref{3a_1} satisfies $\lim\limits_{t\to\infty} u(t,x;u_0)=0$ for $q \geq q^{**}$ and $\lim\limits_{t\to\infty} u(t,x;u_0)=U(x)>0$ for $0<q < q^{**}$.  The proof is completed.
\end{proof}

\begin{remark}
Noting that $$\lambda_p(\mathfrak{L}_{\Omega,J,q}+h(x,0))+d=\lambda_p(\mathfrak{L}_{\Omega,J,q}+h(x,0)-d)\geq \lambda_p(\mathfrak{L}_{\Omega,J,q}+h(x,0)-d\int_\Omega J(x-y)dy),$$
we have $q^{**}\leq q^{*}$.
\end{remark}
\section{The limiting behaviors of solutions with respect to advection \label{3a_part5}}
\setcounter{equation}{0}
In this section, we analyze the limiting behaviors of positive solution to \eqref{3a_11}(or \eqref{3a_1}) when $q\to 0^+/+\infty$. Meanwhile, if the nontrivial stationary solution of \eqref{3a_11}(or \eqref{3a_1}) exists, the limiting behaviors also can be studied. Firstly, we need to show that the positive solution of \eqref{3a_11}(or \eqref{3a_1}) and the nontrivial stationary solution of \eqref{3a_11}(or \eqref{3a_1}) are continuous with respect to $q$.

\begin{lemma}\label{3lemma4}
 Let $u_q(t,x)$ be the unique positive solution of \eqref{3a_11}(or \eqref{3a_1}) for $q>0$. Then $u_q(t,x)$  is continuous with respect to $q>0$ for any given $t>0$ and  $x\in\bar{\Omega}$.
\end{lemma}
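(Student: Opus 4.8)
The plan is to prove continuous dependence on $q$ by a comparison argument, exploiting that the $C^{1}$--in--$x$ estimate of Theorem \ref{3theorem3} (resp.\ Theorem \ref{3theorem4}) holds \emph{uniformly in} $q$. Fix $q_{0}>0$ and let $q$ be close to $q_{0}$ inside $(0,\infty)$. By \eqref{3a_a1}--\eqref{3a_a2} both $u_{q}$ and $u_{q_{0}}$ stay in the fixed range $[0,N_{0}]$ with $N_{0}:=\max\{\max_{\bar{\Omega}}u_{0}(x),N\}$, and $|\partial_{x}u_{q}|,|\partial_{x}u_{q_{0}}|\le C_{0}:=\max\{|M_{1}|,|M_{2}|\}$, where $N_{0}$ and $C_{0}$ depend only on $u_{0}$ and $N$, not on $q$. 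Put $w:=u_{q}-u_{q_{0}}\in C^{1}(\overline{Q_{T}})$; subtracting the equations for $u_{q}$ and $u_{q_{0}}$ gives, for \eqref{3a_11},
\begin{equation*}
\begin{cases}
w_{t}=d\int_{\Omega}J(x-y)[w(t,y)-w(t,x)]\,dy-qw_{x}+c(t,x)w-(q-q_{0})\,\partial_{x}u_{q_{0}},& t\in(0,T),\ x\in\Omega,\\
w(t,l_{1})=0,\qquad w(0,x)=0,
\end{cases}
\end{equation*}
the nonlocal term being replaced by $d\bigl[\int_{\Omega}J(x-y)w(t,y)\,dy-w(t,x)\bigr]$ for \eqref{3a_1}. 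Here $c(t,x):=\int_{0}^{1}f_{u}\bigl(t,x,\theta u_{q}+(1-\theta)u_{q_{0}}\bigr)\,d\theta$ satisfies $\|c\|_{L^{\infty}(\overline{Q_{T}})}\le K$ by \eqref{3a_9} (the arguments lying in $[0,N_{0}]$), and the forcing obeys $|(q-q_{0})\partial_{x}u_{q_{0}}|\le C_{0}|q-q_{0}|$.

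The next step is to absorb this small forcing into a spatially constant supersolution. With $\lambda:=K+1$ set
\[W(t):=\frac{C_{0}\,|q-q_{0}|}{\lambda}\bigl(e^{\lambda t}-1\bigr),\]
which satisfies $W'(t)-KW(t)=W(t)+C_{0}|q-q_{0}|\ge C_{0}|q-q_{0}|$; moreover, since $W$ is constant in $x$, its nonlocal term vanishes for \eqref{3a_11} and equals $dW\bigl(\int_{\Omega}J(x-y)\,dy-1\bigr)\le0$ for \eqref{3a_1}. A short computation then shows that $v_{\pm}:=W\pm w$ satisfy
\begin{equation*}
\begin{cases}
(v_{\pm})_{t}\ge d\int_{\Omega}J(x-y)[v_{\pm}(t,y)-v_{\pm}(t,x)]\,dy-q(v_{\pm})_{x}+c(t,x)v_{\pm},& t\in(0,T),\ x\in\Omega,\\
v_{\pm}(t,l_{1})=W(t)\ge0,\qquad v_{\pm}(0,x)=0
\end{cases}
\end{equation*}
(with the corresponding inequality for the \eqref{3a_1} nonlocal term), so Lemma \ref{3lemma211} (resp.\ Lemma \ref{3lemma2}) yields $v_{\pm}\ge0$ on $\overline{Q_{T}}$, that is,
\[\|u_{q}(t,\cdot)-u_{q_{0}}(t,\cdot)\|_{L^{\infty}(\bar{\Omega})}\le W(t)=\frac{C_{0}|q-q_{0}|}{K+1}\bigl(e^{(K+1)t}-1\bigr),\qquad 0\le t\le T.\]
Letting $q\to q_{0}$ shows $u_{q}\to u_{q_{0}}$ uniformly on $\overline{Q_{T}}$ for every $T>0$; in particular $q\mapsto u_{q}(t,x)$ is continuous on $(0,\infty)$ for each fixed $t>0$ and $x\in\bar{\Omega}$, which is the claim.

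I expect the principal difficulty to be conceptual rather than computational: because the nonlocal operator has no smoothing effect and generates no compactness, one cannot run a parabolic-regularity/compactness argument, and the entire estimate must be powered by (i) the $q$-independent $C^{1}_{x}$ bound \eqref{3a_a2} --- precisely what makes $(q-q_{0})\partial_{x}u_{q_{0}}$ an $O(|q-q_{0}|)$ perturbation in $L^{\infty}$ --- together with (ii) the maximum principle of Lemma \ref{3lemma211}/\ref{3lemma2}. A minor technical point is that $c(t,x)$ is merely in $L^{\infty}$, not continuous; but this is exactly the setting of those maximum principles, so nothing extra is needed, and the treatments of \eqref{3a_11} and \eqref{3a_1} run in parallel, the latter requiring only $\int_{\Omega}J(x-y)\,dy\le1$ to keep the constant supersolution admissible.
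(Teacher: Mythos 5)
Your proof is correct and follows essentially the same route as the paper: write the equation for the difference $w=u_{q}-u_{q_{0}}$, use the $q$-independent gradient bound \eqref{3a_a2} to treat $(q-q_{0})\partial_{x}u_{q_{0}}$ as an $O(|q-q_{0}|)$ forcing, and dominate $|w|$ by a spatially constant exponential barrier via the maximum principle (Lemma \ref{3lemma211}/\ref{3lemma2}). The only differences are cosmetic --- your barrier $W(t)$ starts from $0$ and you apply the maximum principle to $W\pm w$, whereas the paper compares $w$ directly with $(q_{1}-q_{2})e^{At}$ --- so nothing further is needed.
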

\begin{proof}
We only prove the conclusion for \eqref{3a_11}. 
\begin{equation*}
\begin{cases}
        w_t(t,x)=d\int_\Omega J(x-y)[w(t,y)-w(t,x)]dy-q_1w_x+(q_2-q_1)(u_{q_2})_x\\
        \quad\quad\quad\quad\quad\quad\quad  +u_{q_1}h(x,u_{q_1})-u_{q_2}h(x,u_{q_2}),&t>0,x\in{\Omega},\\
        w(t,l_1)=0,&t>0,\\
       w(0,x)=0,&x\in{\Omega}.
        \end{cases}
 \end{equation*}
Noticing that
\begin{align*}
u_{q_1}h(x,u_{q_1})-u_{q_2}h(x,u_{q_2})&=u_{q_1}[h(x,u_{q_1})-h(x,u_{q_2})]+(u_{q_1}-u_{q_2})h(x,u_{q_2})\\
&=u_{q_1}\left(\int_0^1\frac{\partial h(x,\xi)}{\partial\xi}\mid_{\xi=u_{q_2}+s(u_{q_1}-u_{q_2})}ds\right)w+h(x,u_{q_2})w\\
&=\left(u_{q_1}\int_0^1\frac{\partial h(x,\xi)}{\partial\xi}\mid_{\xi=u_{q_2}+s(u_{q_1}-u_{q_2})}ds+h(x,u_{q_2})\right)w,
\end{align*}
it follows from $h(x,u)\in C^{1}(\bar{\Omega}\times\mathbb{R}^+)$ that there exists $M>0$ such that
\[|u_{q_1}h(x,u_{q_1})-u_{q_2}h(x,u_{q_2})|\leq Mw.\]
Since $(u_{q_1})_x$ is uniformly bounded on $Q_\infty$ by \eqref{3a_a2}, we define $v(t,x)=(q_1-q_2)e^{At}>0$, where $A$ is a positive constant independent of $q_1,q_2$ and satisfying $A>\|(u_{q_2})_x\|_{L^\infty}+M$. Then
\begin{align*}
 v_t(t,x)-&\left\{d\int_\Omega J(x-y)[v(t,y)-v(t,x)]dy-q_1v_x+(q_2-q_1)(u_{q_2})_x+u_{q_1}h(x,u_{q_1})-u_{q_2}h(x,u_{q_2})\right\}\\
 &\geq Av-\|(u_{q_2})_x\|_{L^\infty}e^{At}(q_1-q_2)-Mv\\
 &\geq [A-\|(u_{q_2})_x\|_{L^\infty}-M]v\geq0.
\end{align*}
By  comparison principle, the initial value $v(0,x)=q_1-q_2>0$  deduces
\[|w(t,x)|=|u_{q_1}(t,x)-u_{q_1}(t,x)|\leq v(t,x)=(q_1-q_2)e^{At},\]
namely, $u_q(t,x)$  is continuous with respect to $q>0$  for any given $t>0$ and  $x\in\bar{\Omega}$. The proof is completed.
\end{proof}

\begin{lemma}
 Assume that $I\subset(0,\infty)$  and let $u_q(x)$ be the unique nontrivial stationary solution of \eqref{3a_11} (or \eqref{3a_1}) for $q\in I$. Then $u_q(x)$  is continuous with respect to $q\in I$.
\end{lemma}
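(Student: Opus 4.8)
The plan is to prove the statement for \eqref{3a_11} (equivalently for the elliptic problem \eqref{3a_14}); the Dirichlet case \eqref{3a_1}/\eqref{3a_15} is handled identically after replacing $d\int_\Omega J(x-y)\,dy$ by $d$ everywhere. Fix $q_0\in I$ and an arbitrary sequence $q_n\to q_0$ in $I$, and set $M:=\max\{\max_{\bar{\Omega}}u_0,N\}$; the goal is $u_{q_n}\to u_{q_0}$ in $C^1(\bar{\Omega})$. First I would record uniform a priori bounds: every nontrivial solution satisfies $0\le u_q\le M$, and solving its equation for the derivative, $q\,u_q'(x)=d\int_\Omega J(x-y)[u_q(y)-u_q(x)]\,dy+u_q(x)h(x,u_q(x))$, gives $\|u_q'\|_\infty\le C_0/q$ with $C_0$ depending only on $d$, $M$ and $\sup_{\bar{\Omega}\times[0,M]}|h|$; since $q_n\to q_0>0$, for large $n$ this is a bound independent of $n$. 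The $C^1$-estimate already used to prove $C^1$-regularity of nontrivial solutions (the theorem following Theorem~\ref{3theorem5}) furnishes, after division by $q_n$, a modulus of continuity for $u_{q_n}'$ that is uniform in $n$ (it involves only the modulus of continuity of $J$, $\|u_{q_n}\|_\infty\le M$, $\|u_{q_n}'\|_\infty$, and $\|h\|_{C^1}$). By the Arzel\`a--Ascoli theorem, $\{u_{q_n}\}$ is precompact in $C^1(\bar{\Omega})$.

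Next I would pass to the limit along any convergent subsequence $u_{q_{n_k}}\to V$ in $C^1(\bar{\Omega})$: the nonlocal term converges by dominated convergence, the term $q_{n_k}u_{q_{n_k}}'$ converges since $q_{n_k}\to q_0$ and $u_{q_{n_k}}'\to V'$ uniformly, and the reaction term converges by continuity of $h$, so $V$ solves \eqref{3a_14} with $q=q_0$, with $V(l_1)=0$ and $0\le V\le M$. By the uniqueness Theorem~\ref{3theorem6}, $V=u_{q_0}$ provided $V\not\equiv0$. Since every subsequence then has a sub-subsequence converging in $C^1(\bar{\Omega})$ to the \emph{same} limit $u_{q_0}$, it follows that $u_{q_n}\to u_{q_0}$ in $C^1(\bar{\Omega})$, and as $q_n\to q_0$ was arbitrary, $q\mapsto u_q$ is continuous on $I$ (in particular in $C(\bar{\Omega})$).

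The hard part -- and the only genuinely nontrivial step -- is ruling out $V\equiv0$, i.e.\ excluding the possibility that the nontrivial solutions degenerate to the trivial one as $q_n\to q_0$. Here I would argue by renormalisation: assuming $u_{q_{n_k}}\to0$ uniformly, put $v_k:=u_{q_{n_k}}/\|u_{q_{n_k}}\|_\infty$, so $\|v_k\|_\infty=1$, $v_k\ge0$, $v_k(l_1)=0$, and dividing the stationary equation by $\|u_{q_{n_k}}\|_\infty$ shows $v_k$ solves the \emph{linear} equation $d\int_\Omega J(x-y)[v_k(y)-v_k(x)]\,dy-q_{n_k}v_k'+h(x,u_{q_{n_k}}(x))v_k=0$. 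The same a priori bounds (now with $\|v_k\|_\infty=1$) give precompactness of $\{v_k\}$ in $C^1(\bar{\Omega})$; a subsequential limit $v$ satisfies $\|v\|_\infty=1$, $v\ge0$, $v(l_1)=0$, and -- because $h(x,u_{q_{n_k}}(x))\to h(x,0)$ uniformly -- $d\int_\Omega J(x-y)[v(y)-v(x)]\,dy-q_0v'+h(x,0)v=0$ on $\Omega$. By the strong maximum principle (Lemma~\ref{3lemma5}) applied after subtracting a large multiple of $v$, either $v\equiv0$ (impossible as $\|v\|_\infty=1$) or $v>0$ in $(l_1,l_2]$. In the latter case $v$ is an admissible test function in the definition \eqref{3a_13} of $\lambda_p(\mathfrak{L}_{\Omega,J,q_0}+h(x,0)-d\int_\Omega J(x-y)\,dy)$ with $\lambda=0$, so this principal eigenvalue is $\ge0$; but the existence of the nontrivial solution $u_{q_0}$ forces, by Theorem~\ref{3theorem9}(2), that the same principal eigenvalue is $<0$ -- a contradiction. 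Hence $V\not\equiv0$, which closes the argument. The Dirichlet case runs verbatim with $d\int_\Omega J(x-y)\,dy$ replaced by $d$ and Theorem~\ref{3theorem9}(2) replaced by Theorem~\ref{3theorem10}(2).
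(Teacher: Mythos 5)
Your proof is correct, but it takes a genuinely different route from the paper's. The paper proves the stronger statement that $q\mapsto u_q$ is \emph{differentiable}: it linearizes the stationary operator at $u_q$, shows via the adjoint-type eigenfunction of Proposition \ref{3proposition2} (which vanishes at $l_2$, while $u_q$ vanishes at $l_1$, so the boundary term drops out) together with the strict monotonicity $h_u<0$ that the principal eigenvalue of the linearized operator $\mathcal{L}_\varphi(q,u_q)$ is positive, concludes invertibility, and invokes the implicit function theorem. Your argument is compactness plus uniqueness: uniform $C^1$ bounds (available because $q$ stays bounded away from $0$ near $q_0$, so the equation itself controls $u_q'$ and its modulus of continuity), Arzel\`a--Ascoli, identification of every subsequential limit with $u_{q_0}$ via Theorem \ref{3theorem6}, and a renormalization step to exclude collapse to the trivial solution, which you correctly reduce to the sign dichotomy for the principal eigenvalue in Theorem \ref{3theorem9}. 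What each approach buys: the paper's method yields differentiability (stronger, though not used elsewhere), at the cost of the delicate step ``positive generalized principal eigenvalue $\Rightarrow$ invertible'', which for nonlocal operators is not automatic because of possible essential spectrum and which the paper passes over quickly; your method is more elementary, is self-contained given the Section 3 results, avoids that spectral subtlety entirely, and in fact delivers continuity in $C^1(\bar{\Omega})$ rather than just pointwise or in the sup norm. The only cosmetic caveat is that Lemma \ref{3lemma5} must be read, as the paper itself does elsewhere, as the dichotomy ``$v\equiv 0$ or $v>0$ on $(l_1,l_2]$'', which is exactly how you use it.
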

\begin{proof}
The proof is inspired by \cite{Ouyang1992,Sun2021}. We only prove it for \eqref{3a_11}. It suffices to show that  $u_q(x)$  is differentiable with respect to $q\in I$.  Obviously, $u_q(x)>0$ on ${\Omega}$. For any $(q,\varphi)\in I\times C^1(\bar{\Omega})$, define the operator
\[\mathcal{L}(q,\varphi)=d\int_\Omega J(x-y)[\varphi(y)-\varphi(x)]dy-q\varphi'+\varphi h(x,\varphi).\]
 The corresponding linear operator with $\varphi$ at $\varphi=u_q$ is
\[\mathcal{L}_\varphi(q,u_q)v=d\int_\Omega J(x-y)[v(y)-v(x)]dy-qv'+[h(x,u_q)+u_qh_u(x,u_q)]v.\]
Denote $\lambda$ as the principal eigenvalue of operator $\mathcal{L}_\varphi(q,u_q)$. By Proposition \ref{3proposition1}(3), $\lambda$ is also the principal eigenvalue of operator $\mathcal{L}_\varphi(-q,u_q)$ and can be written as
\begin{align*}
\lambda:=\inf\{\lambda\in\mathbb{R}|\exists \varphi\in &C^1(\Omega)\cap C(\bar{\Omega}) \ \mbox{satisfying} \ \varphi>0 \ \mbox{in} \ \Omega,\varphi(l_2)=0,\varphi(l_1)>0, \\  &\mbox{and} \
\mathfrak{L}_{\Omega,J,q}[\varphi(x)]+a(x)\varphi(x)+\lambda\varphi(x)\geq0\},
\end{align*}
by Proposition \ref{3proposition2}.  That is to say, there exists a principal eigenfunction $\phi$ satisfying $\phi(x)>0$ in $\Omega$, $\phi(l_2)=0,\phi(l_1)>0$ such that
\begin{align}\label{3a_20}
d\int_\Omega J(x-y)[\phi(y)-\phi(x)]dy+q\phi'+[h(x,u_q)+u_qh_u(x,u_q)]\phi+\lambda\phi\geq0 \ \mbox{in} \ \Omega.
\end{align}
In terms of $\mathcal{L}(q,u_q)=0$, namely,
\begin{align}\label{3a_21}
d\int_\Omega J(x-y)[u_q(y)-u_q(x)]dy-qu_q'+u_q h(x,u_q)=0,
\end{align}
we multiply \eqref{3a_20} by $u_q$ and \eqref{3a_21} by $\phi$, and then, integrate both sides of the above equations in $\Omega$ and subtract. Together with $h(x,u)$ decreasing in $u$, we obtain
\begin{align*}
-\lambda\int_\Omega\phi u_qdx\leq q(\phi u_q)|_{l_1}^{l_2}+\int_\Omega\phi u_q^2h_u(x,u_q)dx<0.
\end{align*}
That is to say, $\lambda>0$ and operator $\mathcal{L}_\varphi(q,u_q)$ is invertible. The implicit function theorem means that the conclusion holds.
\end{proof}
\begin{theorem}\label{3theorem13}
Let $u_q(x)$ be the unique nontrivial stationary solution of \eqref{3a_11}  for $q\in (0,q^*)$. Then
$$\lim\limits_{q\to0^+} u_q(x)=\gamma(x) \ \mbox{uniformly in} \ \bar{\Omega},\ \lim\limits_{q\to (q^*)^-} u_q(x)=0 \ \mbox{uniformly in} \ \bar{\Omega},$$
     where $\gamma(x)$ is the unique positive solution of
     \[d\int_\Omega J(x-y)[\gamma(y)-\gamma(x)]dy+\gamma h(x,\gamma)=0,x\in{\Omega}.\]
\end{theorem}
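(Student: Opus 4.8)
The plan is to establish the two limits separately. In both cases I start from the built-in bound $0<u_q(x)\le \max\{\max_{\bar\Omega}u_0,N\}=:C$ of \eqref{3a_14}, from the positivity $u_q>0$ on $(l_1,l_2]$, and I rewrite the stationary equation as the first-order relation
\[q\,u_q'(x)=d\!\int_\Omega J(x-y)u_q(y)\,dy-u_q(x)\Big[d\!\int_\Omega J(x-y)\,dy-h(x,u_q(x))\Big].\]
I also use that, by Proposition \ref{3proposition1}(4) and Proposition \ref{3proposition3} together with $h(x,0)-d\int_\Omega J(x-y)\,dy\ge0$, the map $q\mapsto\lambda_p(\mathfrak L_{\Omega,J,q}+h(x,0)-d\int_\Omega J(x-y)\,dy)$ is continuous, strictly increasing, negative on $(0,q^*)$ and zero at $q^*$; hence the unique nontrivial solution $u_q$ exists precisely on $(0,q^*)$.

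I would treat $q\to(q^*)^-$ first, since it is genuinely uniform. For $q$ in a left neighbourhood of $q^*$ the prefactor $1/q$ is bounded, so the displayed identity gives $|u_q'|\le C'/q\le 2C'/q^*$; thus $\{u_q\}$ is bounded in $C^1(\bar\Omega)$ uniformly in such $q$. Along any $q_n\uparrow q^*$, Arzel\`a--Ascoli extracts $u_{q_n}\to w$ in $C^1(\bar\Omega)$ (the derivatives converge because $u_{q_n}'$ is $1/q_n$ times a uniformly convergent right-hand side). The limit $w\ge0$ satisfies $w(l_1)=0$ and solves the stationary problem at $q=q^*$; since $\lambda_p(q^*)=0$, Theorem \ref{3theorem8} rules out a nontrivial solution, forcing $w\equiv0$. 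As the subsequential limit is always $0$, the whole family satisfies $u_q\to0$ in $C(\bar\Omega)$, which is the asserted uniform convergence on $\bar\Omega$.

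For $q\to0^+$ I would first identify the limit in the interior. The convolution $x\mapsto d\int_\Omega J(x-y)u_q(y)\,dy$ is uniformly bounded and equicontinuous in $x$ (uniformly in $q$, from the uniform continuity of $J$ under \textbf{(J1)} and the bound $C$), so along a subsequence it converges in $C(\bar\Omega)$; inserting this into the displayed relation and letting $q\to0$ shows that any locally uniform limit $\gamma$ solves $d\int_\Omega J(x-y)[\gamma(y)-\gamma(x)]\,dy+\gamma h(x,\gamma)=0$ on $\Omega$. Uniqueness of the positive solution of this $q=0$ nonlocal KPP problem---obtained as in Theorem \ref{3theorem6} from the strict monotonicity of $h(x,\cdot)$---pins down the limit and promotes subsequential to full convergence, uniformly on every compact subset of $(l_1,l_2]$.

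The remaining, and decisive, difficulty is to extend this convergence uniformly up to the endpoint $l_1$, and this is where I expect the whole argument to concentrate. The displayed identity is a singularly perturbed first-order problem with vanishing parameter $q$ and the absorbing datum $u_q(l_1)=0$ imposed at the inflow boundary; evaluating it at $l_1$ gives $u_q'(l_1)=\tfrac dq\int_\Omega J(l_1-y)u_q(y)\,dy=O(1/q)$, so a boundary layer of width $O(q)$ forms at $l_1$ and no $q$-uniform gradient bound survives there. To reach the sup-norm statement I would construct $q$-independent barriers adapted to this layer---of the form $\gamma(x)\mp\kappa\,e^{-(x-l_1)/q}$, corrected to sub/supersolutions via the strict monotonicity of $h$---and use Lemma \ref{3lemma5} to trap $u_q$ between them and quantify the layer. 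The delicate and main obstacle is then to show that this degenerate layer does not obstruct convergence in sup norm, i.e. that $\sup_{\bar\Omega}|u_q-\gamma|\to0$ must be reconciled with $u_q(l_1)=0$; carrying out this boundary-layer analysis at $l_1$ is the crux on which the uniform-on-$\bar\Omega$ conclusion rests.
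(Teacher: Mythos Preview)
Your argument for $q\to(q^*)^-$ coincides with the paper's: a $q$-uniform $C^1$ bound from the stationary equation (available since $1/q$ stays bounded near $q^*$), Arzel\`a--Ascoli, and the nonexistence result at $q^*$ forcing any limit to be trivial.

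For $q\to0^+$ the paper does not attempt a boundary-layer construction. It fixes a small $\rho>0$, uses the gradient bound for $q\ge\rho$, and for $q\in(0,\rho]$ appeals to the preceding lemma (continuity of $q\mapsto u_q$, proved via the implicit function theorem) together with the uniform continuity of $u_\rho$ to argue, by a triangle inequality $|u_q(x)-u_q(y)|\le|u_q(x)-u_\rho(x)|+|u_\rho(x)-u_\rho(y)|+|u_\rho(y)-u_q(y)|$, that $\{u_q\}_{q\in(0,\rho]}$ is still equicontinuous on $\bar\Omega$; Arzel\`a--Ascoli then yields a limit solving the $q=0$ equation.

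Your hesitation at $x=l_1$ is, however, exactly on point, and the barrier program you sketch cannot close the gap. Since $u_q(l_1)=0$ for every $q>0$, while the positive solution $\gamma$ of the $q=0$ nonlocal Neumann problem necessarily satisfies $\gamma(l_1)>0$ (if $\gamma(l_1)=0$ the equation at $l_1$ gives $d\int_\Omega J(l_1-y)\gamma(y)\,dy=0$, forcing $\gamma\equiv0$ near $l_1$), one has $\|u_q-\gamma\|_{C(\bar\Omega)}\ge\gamma(l_1)>0$ for all $q$, so uniform convergence on all of $\bar\Omega$ is impossible. In particular your barriers $\gamma(x)\mp\kappa e^{-(x-l_1)/q}$ must satisfy $\kappa\ge\gamma(l_1)$ to accommodate $u_q(l_1)=0$, which blocks $\kappa\to0$. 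The achievable conclusion is convergence uniformly on compact subsets of $(l_1,l_2]$, which your interior argument does deliver; the paper's equicontinuity step for small $q$ is correspondingly circular (it needs $\sup_x|u_q(x)-u_\rho(x)|<\epsilon$ for \emph{all} $q\in(0,\rho]$, which already presumes control of $u_q$ as $q\to0^+$), and the theorem as stated is too strong at the left endpoint.
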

\begin{proof}
By Section 3, we know that for $q\in (0,q^*)$, the function $u_q(x)$ satisfies $u_q(l_1)=0$, $u_q(x)>0$ in $(l_1,l_2]$, and
\[0\leq u_{q}(x) \leq \max\{\max_{\bar{\Omega}}u_0(x), N\}:=M,\]
which means $u'_q(l_1)>0$ and $u_q(x)$ is uniformly bounded.

We first choose a small enough $\rho>0$, and consider the case $q\geq \rho$. Let $z(x):=(M_1-u_{q}'(x))e^{-N_1(x-l_1)}$ for some constant $M_1\geq u'_q(l_1)$ and $N_1>0$. Obviously, $z(l_1)=M_1- u'_q(l_1)\geq0$. We shall obtain $z(x)\geq0$. Otherwise,  there is $x_0\in (l_1,l_2]$ such that $z(x_0)=\inf_{[l_1,l_2]}z(x)<0$.
And so, $z'(x_0)\leq0$. Moreover, a simple computation implies that
\begin{align*}
qz'(x)&=-qN_1z(x)-(qu_{q}'(x))'e^{-N_1(x-l_1)}\\
&=-qN_1z(x)+d\int_\Omega J(x-y)[z(y)-z(x)]dy+k_1(x,u_q)z(x)+k_2(x,u_q)e^{-N_1(x-l_1)}\\
&=d\int_\Omega J(x-y)[z(y)-z(x)]dy+(k_1(x,u_q)-qN_1)z(x)+k_2(x,u_q)e^{-N_1(x-l_1)},
\end{align*}
where
\begin{align*}
k_1(x,u_q)&=du_q(l_2)J(x-l_2)+d\int_\Omega  J_x(x-y)dyu_q-u_qh_x(x,u_q)-M_1h(x,u_q)-M_1u_qh_u(x,u_q);\\
k_2(x,u_q)&=h(x,u_q)+u_qh_u(x,u_q).
\end{align*}
By $0<u_q<M$ and $h(x,u)\in C^1([l_1,l_2]\times \mathbb{R})$, we have that $k_i(x,u_q)(i=1,2)$ is uniformly bounded. That is, there there is $M_2>0$ such that $|k(x,u_q)|\leq M_2$. Then
\begin{align*}
0&\geq qz'(x_0)\\
&=d\int_\Omega J(x_0-y)[z(y)-z(x_0)]dy+(k_1(x_0,u_q(x_0))-qN_1)z(x_0)+k_2(x_0,u_q(x_0))e^{-N_1(x_0-l_1)}\\
&\geq (M_2-\rho N_1)z(x_0)-M_2e^{-N_1(x_0-l_1)}>0,
\end{align*}
if we choose $N_1>0$ large enough such that
\[(M_2-\rho N_1)z(x_0)-M_2e^{-N_1(x_0-l_1)}>0.\]
This is a contradiction, and then $u_{q}'(x)\leq M_1$ for $q\geq\rho$. Similarly, we also have $u_{q}'(x)\geq -M_1$ for $q\geq\rho$. And so, $u_q'(x)$ is uniformly bounded for $q\geq\rho$.  It follows that $u_q(x)$ is equi-continuous for $q\geq\rho$.

For the case $q\in[0,\rho]$, since $u_q(x)$ is continuous in $q\in[0,\rho]$, it deduces that, for any $\epsilon>0$ and any given $x\in\bar{\Omega}$, there exists $\delta_1(x)>0$ independent of $q$ such that for any $q\in[0,\rho]$ satisfying $|q-\rho|<\delta_1(x)$,  we have \[|u_{q}(x)-u_{\rho}(x)|<\epsilon.\]
By $x\in\bar{\Omega}$, we can choose $\delta_3=\min\{\min_{x\in\bar{\Omega}}\delta_1(x),\rho\}>0$ such that for any $q\in[0,\rho]$, we have $|q-\rho|<\delta_3$ and \[|u_{q}(x)-u_{\rho}(x)|<\epsilon.\]
Meanwhile, since $u_\rho(x)$ is continuous in $x\in\bar{\Omega}$, we know that, for any $\epsilon>0$, there exists $\delta_2(\rho)>0$ independent of $q,x$ such that for any $x,y\in\bar{\Omega}$ satisfying $|x-y|<\delta_2(\rho)$, we have \[|u_{\rho}(x)-u_{\rho}(y)|<\epsilon.\]
Then, for any $\epsilon>0$, there exists $\delta=\min\{\min_{x\in\bar{\Omega}}\delta_1(x),\rho,\delta_2(\rho)\}>0$ independent of $q,x$ such that for any $x,y\in\bar{\Omega}$ satisfying $|x-y|<\delta$ and  for any $q\in[0,\rho]$, we have \[|u_{q}(x)-u_{q}(y)|\leq|u_{q}(x)-u_{\rho}(x)|+|u_{\rho}(x)-u_{\rho}(y)|+|u_{\rho}(y)-u_{q}(y)|<3\epsilon.\]
It follows that $u_q(x)$ is equi-continuous in $q\in[0,\rho]$.

Combining with the fact that $u_q(x)$ is uniformly bounded, by Ascoli-Arzela theorem, there is a nonnegative function $\theta(x)\in C(\bar{\Omega})$ such that
$$\lim\limits_{q\to0^+} u_q(x)=\theta(x) \ \mbox{uniformly in} \ \bar{\Omega}.$$
Thus, as $q\to0^+$, we have
 \[d\int_\Omega J(x-y)[\theta(y)-\theta(x)]dy+\theta h(x,\theta)=0.\]
According to \cite{Berestycki2016}, the above equation has a unique positive solution for all $d>0$.

On the other hand, we  can also find a nonnegative function $\eta(x)\in C(\bar{\Omega})$ such that
$$ \lim\limits_{q\to (q^{*})^-} u_q(x)=\eta(x) \ \mbox{uniformly in} \ \bar{\Omega}$$
by Ascoli-Arzela theorem. Furthermore, $\eta(x)$ satisfies
\[d\int_\Omega J(x-y)[\eta(y)-\eta(x)]dy-q^*\eta'+\eta h(x,\eta)=0.\]
However, Theorem \ref{3theorem12} implies that the above equation has no nontrivial solution when $q=q^*$, namely, $\eta(x)\equiv0$ in ${\Omega}$. The proof is completed.
\end{proof}
\begin{theorem}
Let $u_q(t,x)$ be the unique positive solution of \eqref{3a_11} for $q>0$. For given $T>0$, we have
$$\lim\limits_{q\to0^+} u_q(t,x)=v^*(t,x) \ \mbox{uniformly in} \ [0,T]\times\bar{\Omega},\ \lim\limits_{q\to +\infty} u_q(t,x)=0 \ \mbox{uniformly in} \ [0,T]\times\bar{\Omega},$$
     where $v^*(t,x)$ is the unique positive solution of
    \begin{equation}\label{3a_25}
\begin{cases}
        v^*_t(t,x)=d\int_\Omega J(x-y)[v^*(t,y)-v^*(t,x)]dy+v^*h(x,v^*),&0<t<T,x\in{\Omega},\\
       v^*(0,x)=u_0(x),&x\in{\Omega}.
        \end{cases}
 \end{equation}
\end{theorem}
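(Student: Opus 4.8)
The plan is to treat the two limits separately, both times via the comparison principle for \eqref{3a_11} together with explicit barriers; for $q\to0^+$ I would in addition invoke the continuity of $u_q$ in $q$ from Lemma~\ref{3lemma4}, the $q$-independent a priori bounds \eqref{3a_a1}--\eqref{3a_a2}, and the uniqueness of the positive solution of the limiting problem \eqref{3a_25}.

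For $q\to+\infty$, I would construct an exponentially decaying supersolution. Since $h(x,u)$ is nonincreasing in $u$, $u_qh(x,u_q)\le\bar h\,u_q$ with $\bar h=\sup_{\bar\Omega}h(x,0)$; and for $\nu\in(0,\mu)$ (so that $\widehat J(\nu):=\int_{\mathbb R}J(z)e^{-\nu z}\,dz<\infty$ by \textbf{(J0)}) one has $\int_\Omega J(x-y)[\psi(y)-\psi(x)]\,dy\le\widehat J(\nu)\psi(x)$ for $\psi(x)=e^{\nu x}$. Setting $\beta_q:=q\nu-d\widehat J(\nu)-\bar h$ and $\bar v_q(t,x):=(\max_{\bar\Omega}u_0)\,e^{\nu(x-l_1)-\beta_q t}$, a one-line computation shows that $\bar v_q$ is a supersolution of \eqref{3a_11} whenever $\beta_q\ge0$, with $\bar v_q(0,\cdot)\ge u_0$ and $\bar v_q(t,l_1)>0=u_q(t,l_1)$; the comparison principle then gives $0<u_q(t,x)\le(\max_{\bar\Omega}u_0)\,e^{\nu(l_2-l_1)}e^{-\beta_q t}$ for $q$ large. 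Since $\beta_q\to+\infty$, this yields $u_q\to0$ uniformly in $(t,x)$ with $t$ bounded away from $0$.

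For $q\to0^+$, fix $T>0$. By \eqref{3a_a1}--\eqref{3a_a2} the family $\{u_q\}$ is bounded in $L^\infty(\overline{Q_T})$ and $\{\partial_x u_q\}$ in $L^\infty(Q_\infty)$, both uniformly in $q\in(0,1]$. Re-running the comparison argument in the proof of Lemma~\ref{3lemma4} with these $q$-uniform bounds, the constant $A$ there depends only on the bounds and on $h$ on the relevant range, hence not on $q$, and one gets $|u_{q_1}(t,x)-u_{q_2}(t,x)|\le|q_1-q_2|e^{AT}$ on $\overline{Q_T}$. Thus $\{u_q\}$ is uniformly Cauchy as $q\to0^+$ and converges, uniformly on $\overline{Q_T}$, to some $v^*\in C(\overline{Q_T})$. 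Passing to the limit in the (weak, equivalently integral) formulation of \eqref{3a_11}, the nonlocal and reaction terms converge by uniform convergence of $u_q$ and continuity of $J,h$, while $\|q\,\partial_x u_q\|_{L^\infty}\to0$ eliminates the advection term; hence $v^*$ solves \eqref{3a_25}, and by the maximum principle $v^*>0$ for $t>0$. As the limiting equation is Lipschitz its positive solution is unique (comparison/Gronwall, in the spirit of the stationary uniqueness of \cite{Berestycki2016}), so $v^*$ is the stated limit; and since the whole family is uniformly Cauchy no subsequence extraction is needed.

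The main obstacle I expect is the gradient control near the inflow boundary $x=l_1$. Since $u_q(t,l_1)\equiv0$, one has $\partial_t u_q(t,l_1)=0$ and $f(t,l_1,0)=0$, so evaluating \eqref{3a_11} at $x=l_1$ gives $\partial_x u_q(t,l_1)=\frac{d}{q}\int_\Omega J(l_1-y)u_q(t,y)\,dy$, which grows like $1/q$ as $q\to0^+$: an $O(q)$-wide boundary layer forms, in which $u_q$ climbs from $0$ up to its interior profile. Controlling $\partial_x u_q$ uniformly in $q$ is therefore the delicate point, and it is exactly where \eqref{3a_a2} is doing the work. The cautious route is to argue on $[l_1+\delta,l_2]$ for each $\delta>0$, obtaining there a $q$-uniform gradient bound by a barrier that absorbs the $O(1/q)$ blow-up localized at $l_1$, then apply Arzel\`a--Ascoli and uniqueness of \eqref{3a_25}, and finally let $\delta\to0^+$; in that reading the convergence is uniform on compact subsets of $(l_1,l_2]$. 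The analogous, harmless caveat for $q\to+\infty$ is that the convergence excludes $t=0$, where $u_q(0,\cdot)=u_0$.
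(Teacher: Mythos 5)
Your proposal is correct in substance and reaches the same conclusions, but by genuinely different routes on both halves. For $q\to+\infty$ the paper builds no barrier: it multiplies the equation by $\zeta(t)/q$ with $\zeta(0)=\zeta(T)=0$, integrates in $t$, and first deduces that the Ascoli--Arzel\`a limit $w^*$ is independent of $x$, then integrates $\int_{l_1}^{x}(u_q)_x$ against the equation once more to conclude $w^*\equiv 0$. Your exponential supersolution $\bar v_q=(\max_{\bar\Omega}u_0)\,e^{\nu(x-l_1)-\beta_q t}$ is a valid alternative (the computation checks out for $\nu\in(0,\mu)$ under \textbf{(J0)}, \textbf{(J1)} and the monotonicity of $h$), and it buys an explicit decay rate $e^{-\beta_q t}$ with $\beta_q\sim q\nu$; the price is the restriction $t\geq t_0>0$, but that restriction is unavoidable for either proof, since $u_q(0,\cdot)=u_0\not\equiv 0$ means the sup over $[0,T]\times\bar\Omega$ never goes below $\max_{\bar\Omega}u_0$, so the paper's claim of uniformity down to $t=0$ is itself an overstatement. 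For $q\to0^+$ the paper extracts a subsequence by Ascoli--Arzel\`a, passes to the limit in the integral formulation (using \eqref{3a_a2} to kill $q(u_q)_x$), and invokes uniqueness of \eqref{3a_25}; your uniform-Cauchy argument via the $q$-uniform Lipschitz estimate $|u_{q_1}-u_{q_2}|\leq|q_1-q_2|e^{AT}$ (the constant $A$ in Lemma~\ref{3lemma4} indeed depends only on the bounds in \eqref{3a_a1}--\eqref{3a_a2} and on $h$) is the same circle of ideas but cleaner, as it dispenses with subsequence extraction.

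Your closing observation about the inflow boundary is not merely a caution; it exposes a defect shared by your argument and the paper's. Evaluating \eqref{3a_11} at $x=l_1$, where $u_q(t,l_1)\equiv 0$ and $f(t,l_1,0)=0$, gives $(u_q)_x(t,l_1)=\frac{d}{q}\int_\Omega J(l_1-y)u_q(t,y)\,dy$, and the integral stays bounded away from $0$ for $t>0$; hence a gradient bound uniform in $q\in(0,1]$ up to $x=l_1$, which is what \eqref{3a_a2} is being used for, cannot hold. Worse, $u_q(t,l_1)=0$ for every $q>0$, while the solution $v^*$ of \eqref{3a_25} satisfies $v^*_t(0,l_1)=d\int_\Omega J(l_1-y)u_0(y)\,dy>0$ and so $v^*(t,l_1)>0$ for $t>0$; pointwise convergence at $x=l_1$ therefore fails and uniform convergence on $[0,T]\times\bar\Omega$ is impossible as stated. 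The honest conclusion is exactly your $\delta$-truncated one: convergence uniform on $[0,T]\times K$ for every compact $K\subset(l_1,l_2]$, with an $O(q)$ boundary layer at $l_1$. The paper's proof glosses over this by treating \eqref{3a_a2} as $q$-uniform without comment, so on this point your version is the more defensible of the two.
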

\begin{proof}
Similar to the proof of Theorem \ref{3theorem13}, by the simple compact argument (Ascoli-Arzela theorem), there exist nonnegative functions $u^*(t,x), w^*(t,x)\in C^1([0,T]\times\bar{\Omega})$ such that
$$\lim\limits_{q\to0^+} u_q(t,x)=u^*(t,x) \ \mbox{uniformly in} \ [0,T]\times\bar{\Omega},$$
$$ \lim\limits_{q\to +\infty} u_q(t,x)=w^*(t,x) \ \mbox{uniformly in} \ [0,T]\times\bar{\Omega}.$$
Clearly,
\[u_q(t,x)-u_0(x)=\int_0^t[d\int_\Omega J(x-y)[u_q(s,y)-u_q(s,x)]dy-q(u_q)_x(s,x)+u_q(s,x)h(x,u_q(s,x))]ds.\]
Since $u_q'$ is bounded in $[0,T]\times\bar{\Omega}$, as $q\to0^+$, it follows that
\[u^*(t,x)-u_0(x)=\int_0^t[d\int_\Omega J(x-y)[u^*(s,y)-u^*(s,x)]dy+u^*(s,x)h(x,u^*(s,x))]ds.\]
Consequently, $u^*(t,x)$ satisfies  \eqref{3a_25}. Noting that \eqref{3a_25} admits a unique positive solution by \cite{Sun2021}, there holds
$$\lim\limits_{q\to0^+} u_q(t,x)=u^*(t,x) \ \mbox{uniformly in} \ [0,T]\times\bar{\Omega}.$$

Now we consider the case for $q\to +\infty$. Taking a smooth nonnegative function $\zeta(t)$ satisfying $\zeta(0)=\zeta(T)=0$. Then, we  multiply the first equation of \eqref{3a_11} by
$\zeta(t)$ and integrate over $[0,T]$, namely,
\begin{align*}
\int_0^T \zeta(t)(u_q)_x(t,x)dt=&\frac{d}{q}\int_0^T\int_\Omega J(x-y)[u_q(t,y)-u_q(t,x)]dy\zeta(t)dt+\frac{1}{q}\int_0^Tu_q(t,x)\zeta'(t)dt\\
&+\frac{1}{q}\int_0^T[u_q(t,x)h(x,u_q(t,x))]\zeta(t)dt.
\end{align*}
Together with the boundedness of $u_q$ in $\overline{Q_T}$, as $q\to +\infty$, we deduce
\[\int_0^T \zeta(t)(w^*)_x(t,x)dt=0.\]
The arbitrary of $\zeta(t)$ implies that $(w^*)_x(t,x)\equiv0$ for all $t\in[0,T]$, and it follows that
\[w^*(t,x)=w^*(t) \ \mbox{for all}\ [0,T]\times\bar{\Omega}.\]
Moreover, for $t\in[0,T]$, we observe
\begin{align*}
\int_0^t u_q(s,x)ds=&\int_0^t\int_{l_1}^x (u_q)_x(s,z)dzds\\=&\frac{d}{q}\int_0^t\int_{l_1}^x\int_\Omega J(z-y)[u_q(s,y)-u_q(s,z)]dydzds\\&+\frac{1}{q}\int_0^t\int_{l_1}^x[u_q(s,z)h(z,u_q(s,z))]dzds-\frac{1}{q}\int_{l_1}^x [u_q(t,z)-u_0(z)]dz.
\end{align*}
As $q\to +\infty$, we know
\[\int_0^t w^*(s)ds=0 \ \mbox{for all}\ t\in[0,T],\]
namely, $w^*(t)\equiv0$ on $[0,T]$. This has proved
 $$\lim\limits_{q\to +\infty} u_q(t,x)=0 \ \mbox{uniformly in} \ [0,T]\times\bar{\Omega}.$$
The proof is completed.
\end{proof}
\begin{remark}
Note that $\gamma(x)$ is the unique nontrivial stationary solution of \eqref{3a_25}.
\end{remark}

Similarly, for \eqref{3a_1}, we have
\begin{theorem}\label{3a_3atheorem1}
Let $u_q(x)$ be the unique nontrivial stationary solution of \eqref{3a_1}  for $q\in (0,q^{**})$. Then
$$\lim\limits_{q\to0^+} u_q(x)=\theta(x) \ \mbox{uniformly in} \ \bar{\Omega},\ \lim\limits_{q\to (q^{**})^-} u_q(x)=0 \ \mbox{uniformly in} \ \bar{\Omega},$$
     where $\theta(x)$ is the unique positive solution of
     \[d\int_\Omega J(x-y)\theta(y)dy-d\theta+\theta h(x,\theta)=0, x\in\Omega.\]
\end{theorem}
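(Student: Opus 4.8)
The statement is the Dirichlet counterpart of Theorem~\ref{3theorem13}, and the plan is to run that proof with the stationary problem for \eqref{3a_1} (i.e. \eqref{3a_15}) in place of \eqref{3a_14}; the only genuinely new point is how the convolution term behaves under differentiation. Throughout, $u_q$ denotes the unique nontrivial stationary solution of \eqref{3a_1} for $q\in(0,q^{**})$, so
\begin{equation*}
d\Big[\int_\Omega J(x-y)u_q(y)\,dy-u_q(x)\Big]-q u_q'(x)+u_q(x)h(x,u_q(x))=0,\quad u_q(l_1)=0,
\end{equation*}
with $0\le u_q\le M:=\max\{\max_{\bar\Omega}u_0,N\}$, $u_q>0$ on $(l_1,l_2]$, and $u_q'(l_1)=\frac{d}{q}\int_\Omega J(l_1-y)u_q(y)\,dy\in(0,dM/q]$.

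First I would fix $\rho\in(0,q^{**})$ and establish a bound on $u_q'$ that is uniform in $q\in[\rho,q^{**})$. Differentiating the equation in $x$ and integrating by parts in the convolution (using $u_q(l_1)=0$), the function $z:=u_q'$ satisfies
\begin{equation*}
q z'(x)=d\Big[\int_\Omega J(x-y)z(y)\,dy-z(x)\Big]+\big(h(x,u_q)+u_q h_u(x,u_q)\big)z(x)+g_q(x),
\end{equation*}
where $g_q(x)=u_q(x)h_x(x,u_q(x))-dJ(x-l_2)u_q(l_2)$ is bounded uniformly in $q$; note that here the zeroth-order term $-du_q$ differentiates cleanly into $-du_q'$, so the differentiated operator is exactly the Dirichlet operator applied to $z$ plus a uniformly bounded forcing, slightly simpler than the Neumann situation of Theorem~\ref{3theorem13}. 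Then, exactly as for \eqref{3a_8} and in the proof of Theorem~\ref{3theorem13}, the barrier $w(x):=(M_1-z(x))e^{-N_1(x-l_1)}$ with $M_1\ge dM/\rho$ and $N_1$ chosen large (this is where $q\ge\rho$ is used, to let $qN_1$ dominate a fixed constant) cannot attain a negative interior minimum; hence $|u_q'|\le M_1$ on $\bar\Omega$ for all $q\in[\rho,q^{**})$, and $\{u_q\}_{q\ge\rho}$ is equicontinuous. For $q\in[0,\rho]$ I would instead use the continuity of $q\mapsto u_q$ (the continuity lemma for nontrivial stationary solutions stated just before Theorem~\ref{3theorem13}, which already covers \eqref{3a_1}) together with the uniform continuity of $u_\rho$ in $x$ and the three-term estimate $|u_q(x)-u_q(y)|\le|u_q(x)-u_\rho(x)|+|u_\rho(x)-u_\rho(y)|+|u_\rho(y)-u_q(y)|$, word for word as in Theorem~\ref{3theorem13}, to obtain equicontinuity on all of $[0,q^{**})$.

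Since $\{u_q\}$ is uniformly bounded and equicontinuous, Ascoli-Arzela yields uniform subsequential limits along $q\to0^+$ and along $q\to(q^{**})^-$, say $\theta_0$ and $\eta$, both in $C(\bar\Omega)$ and nonnegative. As $q\to0^+$ the advection term $qu_q'$ tends to $0$ uniformly (because $u_q'$ stays bounded), so passing to the limit in the stationary equation gives
\begin{equation*}
d\int_\Omega J(x-y)\theta_0(y)\,dy-d\theta_0+\theta_0 h(x,\theta_0)=0\quad\text{on }\Omega,
\end{equation*}
which is the $q=0$ instance of the stationary problem for \eqref{3a_1}. A lower-solution bound on $u_q$ uniform for small $q$ excludes $\theta_0\equiv0$; since this limiting equation has a unique positive solution — the function $\theta$ of the statement (equivalently, by the $q=0$ case of Theorem~\ref{3theorem10}(1), which applies because $\inf_{\bar\Omega}h(x,0)>d$ is in force whenever $q^{**}$ is defined and forces $\lambda_p(\mathfrak{L}_{\Omega,J,0}+h(x,0))+d<0$) — we obtain $\theta_0=\theta$, and uniqueness promotes the subsequential limit to the full limit $\lim_{q\to0^+}u_q=\theta$ uniformly. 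At the other end, $\eta$ solves $d[\int_\Omega J(x-y)\eta(y)\,dy-\eta]-q^{**}\eta'+\eta h(x,\eta)=0$ with $\eta(l_1)=0$; by Theorem~\ref{3theorem11}, \eqref{3a_1} has no nontrivial stationary solution at $q=q^{**}$, hence $\eta\equiv0$, giving $\lim_{q\to(q^{**})^-}u_q=0$ uniformly.

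The main obstacle is the uniform-in-$q$ gradient estimate of the second paragraph: one must check that the barrier constants $M_1,N_1$ can be chosen independently of $q$ on $[\rho,q^{**})$ and then bridge the remaining interval $[0,\rho]$ by continuity, together with the (easily overlooked) point that the $q\to0^+$ limit is the nontrivial, rather than the trivial, solution. Everything else is a direct transcription of the proof of Theorem~\ref{3theorem13}, with \eqref{3a_15} replacing \eqref{3a_14} and Theorem~\ref{3theorem11} replacing Theorem~\ref{3theorem12}.
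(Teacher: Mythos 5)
Your proposal is correct and is essentially the paper's own argument: the paper proves Theorem~\ref{3theorem13} for the Neumann problem and then asserts the Dirichlet version ``similarly,'' and your transcription --- including the boundary term $-dJ(x-l_2)u_q(l_2)$ produced by integrating the convolution by parts (the only structural difference from the Neumann case), the uniform gradient barrier for $q\ge\rho$, the continuity bridge on $[0,\rho]$, and the appeal to Theorem~\ref{3theorem11} to force the limit at $q^{**}$ to vanish --- is exactly the intended adaptation. Your explicit remark that one must rule out $\theta_0\equiv 0$ in the $q\to 0^+$ limit addresses a point the paper's proof of Theorem~\ref{3theorem13} passes over silently, so if anything your version is slightly more careful.
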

\begin{theorem}\label{3a_3atheorem2}
Let $u_q(t,x)$ be the unique positive solution of \eqref{3a_1} for $q>0$. For given $T>0$, we have
$$\lim\limits_{q\to0^+} u_q(t,x)=u^*(t,x) \ \mbox{uniformly in} \ [0,T]\times\bar{\Omega},\ \lim\limits_{q\to +\infty} u_q(t,x)=0 \ \mbox{uniformly in} \ [0,T]\times\bar{\Omega},$$
     where $u^*(t,x)$ is the unique positive solution of
    \begin{equation}\label{3a_22}
\begin{cases}
        u^*_t(t,x)=d[\int_\Omega J(x-y)u^*(t,y)dy-u^*]+u^*h(x,u^*),&0<t<T,x\in{\Omega},\\
       u^*(0,x)=u_0(x),&x\in{\Omega}.
        \end{cases}
 \end{equation}
\end{theorem}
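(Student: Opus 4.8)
The argument is entirely parallel to the proof of the preceding theorem for \eqref{3a_11}, with the Neumann-type nonlocal term $d\int_\Omega J(x-y)[u(t,y)-u(t,x)]\,dy$ replaced throughout by the Dirichlet-type term $d\big[\int_\Omega J(x-y)u(t,y)\,dy-u(t,x)\big]$; both are bounded operators on $L^\infty(\Omega)$ with norm independent of $q$, so every estimate transfers verbatim. \textbf{Step 1 (uniform bounds).} By Theorem \ref{3theorem4}, for each $q>0$ the solution $u_q$ of \eqref{3a_1} satisfies $0<u_q\le M:=\max\{\max_{\bar{\Omega}}u_0,N\}$ on $Q_\infty$ together with the gradient bound \eqref{3a_a2}, in which $M_1,M_2$ depend only on $u_0$ and \emph{not} on $q$. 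Hence $\{u_q\}_{q>0}$ is bounded in $C^1([0,T]\times\bar{\Omega})$, in particular equicontinuous there.

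\textbf{Step 2 (the limit $q\to0^+$).} Given any sequence $q_n\to0^+$, Ascoli--Arzel\`a yields a subsequence along which $u_{q_n}\to u^*$ uniformly on $[0,T]\times\bar{\Omega}$ for some nonnegative $u^*\in C([0,T]\times\bar{\Omega})$. Writing \eqref{3a_1} in Duhamel form,
\[
u_q(t,x)-u_0(x)=\int_0^t\Big[d\Big(\int_\Omega J(x-y)u_q(s,y)\,dy-u_q(s,x)\Big)-q(u_q)_x(s,x)+u_q(s,x)h(x,u_q(s,x))\Big]ds,
\]
and noting $|q(u_q)_x|\le q\max\{|M_1|,M_2\}\to0$ uniformly while the remaining integrands are uniformly bounded, the dominated convergence theorem lets us pass to the limit; thus $u^*$ solves the integral form of \eqref{3a_22}, and differentiating in $t$ shows $u^*$ is a nonnegative solution of \eqref{3a_22}. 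Since \eqref{3a_22} admits a unique positive solution (see \cite{Su2019}), the limit is independent of the subsequence, so $u_q\to u^*$ uniformly on $[0,T]\times\bar{\Omega}$ as $q\to0^+$.

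\textbf{Step 3 (the limit $q\to+\infty$).} Fix a smooth $\zeta$ with $\zeta(0)=\zeta(T)=0$. Multiplying the first equation of \eqref{3a_1} by $\zeta(t)$, integrating over $(0,T)$ and dividing by $q$ gives
\[
\int_0^T\zeta(t)(u_q)_x(t,x)\,dt=\frac1q\int_0^T\zeta(t)\Big[d\Big(\int_\Omega J(x-y)u_q(t,y)\,dy-u_q(t,x)\Big)+u_qh(x,u_q)\Big]dt+\frac1q\int_0^T\zeta'(t)u_q(t,x)\,dt,
\]
whose right-hand side is $O(1/q)$ by Step 1. Passing to a uniformly convergent subsequence $u_{q_n}\to w^*$ and letting $q_n\to+\infty$ yields $\int_0^T\zeta(t)(w^*)_x(t,x)\,dt=0$ for every such $\zeta$, hence $(w^*)_x\equiv0$ and $w^*=w^*(t)$. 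Then, using $\int_0^t u_q(s,x)\,ds=\int_0^t\int_{l_1}^x(u_q)_x(s,z)\,dz\,ds$ and the equation solved for $q(u_q)_x$,
\[
\int_0^t u_q(s,x)\,ds=\frac1q\int_0^t\!\!\int_{l_1}^x\!\Big(d\int_\Omega J(z-y)u_q(s,y)\,dy-du_q(s,z)+u_qh(z,u_q)\Big)dz\,ds-\frac1q\int_{l_1}^x\!\big(u_q(t,z)-u_0(z)\big)dz,
\]
and the right-hand side tends to $0$ as $q\to+\infty$; therefore $\int_0^t w^*(s)\,ds=0$ for all $t\in[0,T]$, forcing $w^*\equiv0$. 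Consequently $u_q\to0$ uniformly on $[0,T]\times\bar{\Omega}$.

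\textbf{Main obstacle.} There is no essentially new difficulty relative to the \eqref{3a_11} case: the whole argument hinges on the $q$-independent gradient estimate \eqref{3a_a2}, which simultaneously delivers the equicontinuity needed for Ascoli--Arzel\`a and makes the advection term $q(u_q)_x$ vanish in the limit $q\to0^+$, together with the uniqueness of the positive solution of the limiting nonlocal problem \eqref{3a_22}. The only point demanding a little care is that the convergences are asserted uniformly up to $t=0$ and up to the boundary $x=l_1$; this is admissible precisely because the bound \eqref{3a_a2} is uniform in $q$ (so that $u_q(t,x)\le\max\{|M_1|,M_2\}\,(x-l_1)$ near $l_1$ for every $q$) and the initial datum $u_0$, satisfying \eqref{3a_2}, is common to all $q$.
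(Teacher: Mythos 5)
Your proposal is correct and follows essentially the same route as the paper: the paper proves the Neumann analogue by combining the $q$-independent bounds of Theorem \ref{3theorem4}, Ascoli--Arzel\`a, the Duhamel/integral form for $q\to0^+$ with uniqueness of the limiting problem, and the test-function argument with $\zeta(0)=\zeta(T)=0$ plus the identity $\int_0^t u_q\,ds=\int_0^t\int_{l_1}^x(u_q)_x\,dz\,ds$ for $q\to+\infty$, and then simply asserts that the Dirichlet case is identical. The only cosmetic difference is that you cite \cite{Su2019} for uniqueness of the positive solution of \eqref{3a_22} where the paper invokes \cite{Sun2021} for the corresponding Neumann limit problem.
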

\begin{remark}
Note that $\theta(x)$ is the unique nontrivial stationary solution of \eqref{3a_22}.
\end{remark}

\section{Numerical simulations\label{3a_part6}}
\setcounter{equation}{0}
In this section,  we apply some numerical simulations to support theoretical results in previous sections and observe  how advection  rate  $q$ affects the longtime behaviors of solutions.  Take equation  \eqref{3a_1} with $q>0$ for example. Then we take  $\Omega=(0,5)$ and $d=0.26$. Meanwhile, define the initial function $u_0(x)=sin \frac{\pi x}{5}$, kernel function
$J(x)=\frac{1}{\sqrt{2\pi}}e^{-\frac{x^2}{2}}$, and
\[f(t,x,u)=(\frac{5}{2}-\frac{x^2}{16}-u)u.\]

{\bf{Example 1.}}   A numerical sample for case $q=0.5$ is presented in Fig. \ref{3a_figure1}. We observe  that  species $u$ will ultimately survive and its species density reaches a steady state as time tends to infinite in domain $\Omega=(0,5)$. And from  Fig. \ref{3a_figure1}, we find that the advection rate has an impact on the steady state of species density.  A numerical sample for case $q=1$ is shown in Fig. \ref{3a_figure2}. It follows that species $u$ will extinct in domain $\Omega=(0,5)$, which implies that its species density will tend to $0$ as time tends to infinite. Then Figs. \ref{3a_figure1} and \ref{3a_figure2} depict the temporal dynamics of species $u$ for system  \eqref{3a_1}.
 \begin{figure}[h]
  \centering
  \includegraphics[height=60mm,width=180mm]{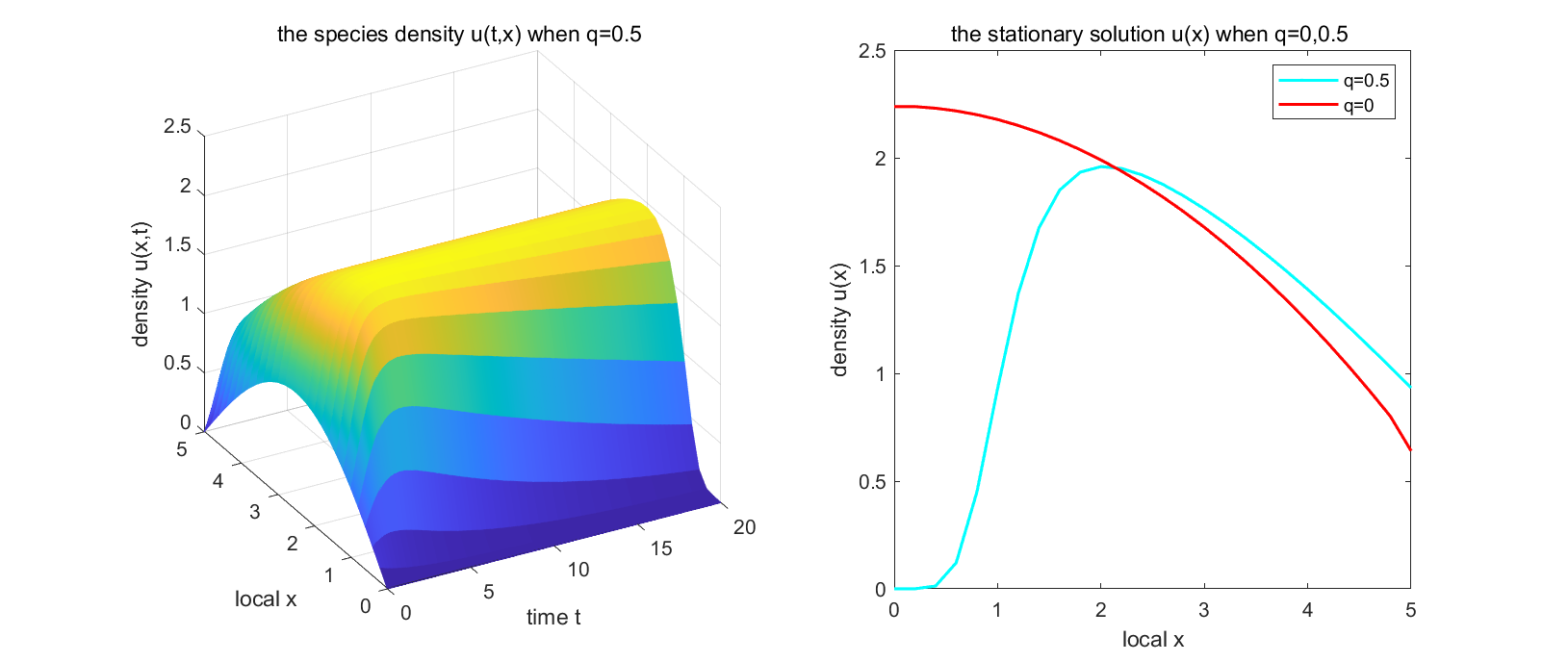}\\
  \caption{Persistence of $u$ for $q=0.5$}\label{3a_figure1}
\end{figure}

 \begin{figure}[h]
  \centering
  \includegraphics[height=60mm,width=180mm]{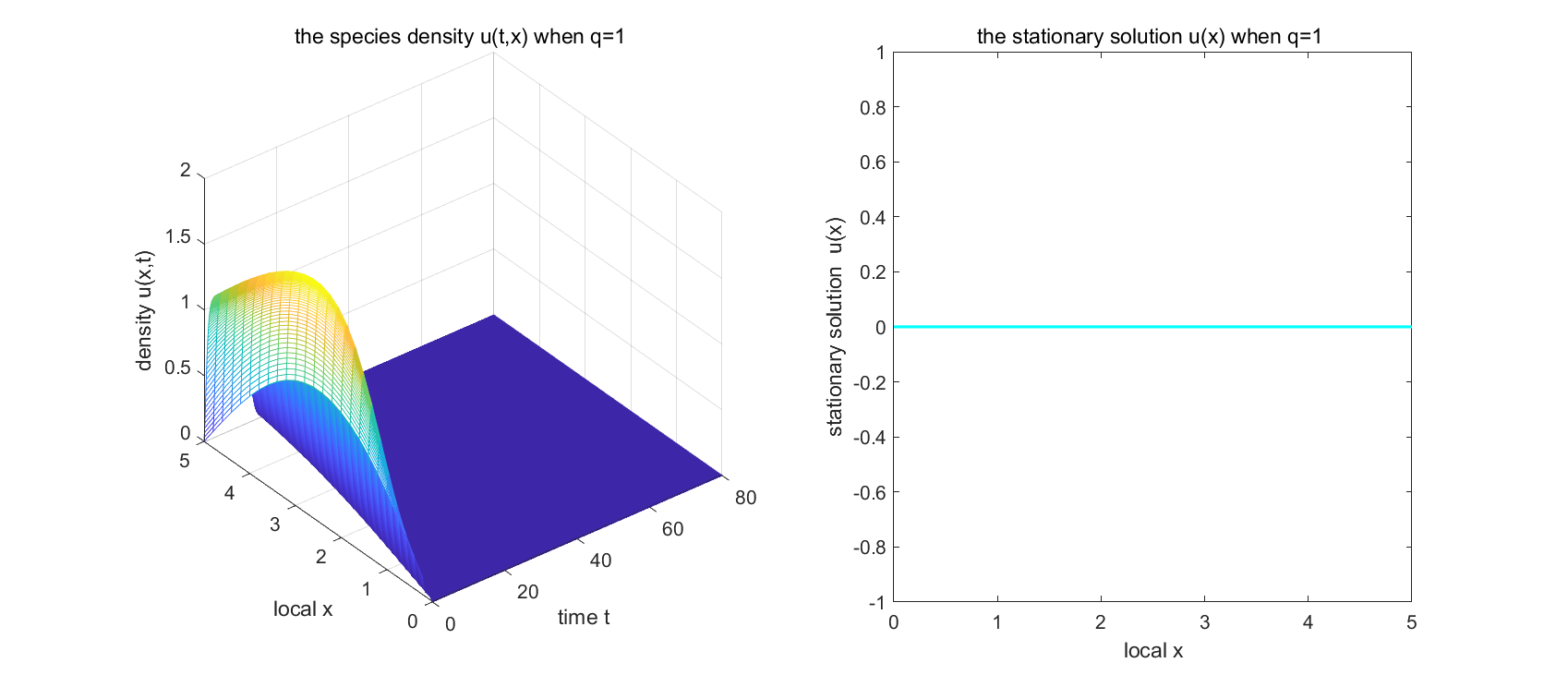}\\
  \caption{Extinction of $u$ for $q=1$}\label{3a_figure2}
\end{figure}

\begin{figure}[h]
  \centering
  \includegraphics[height=60mm,width=80mm]{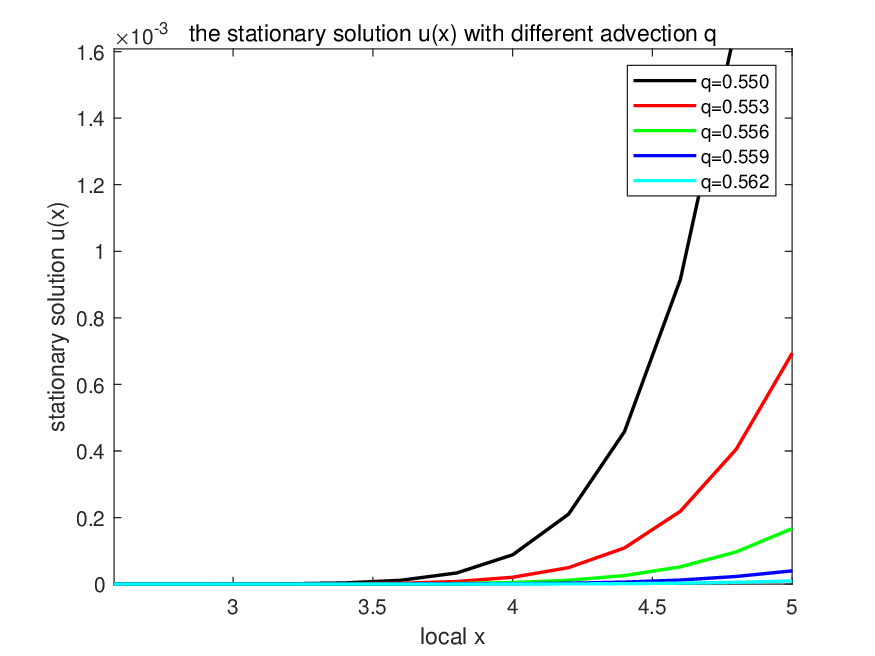}\\
  \caption{The estimation for threshold value $q^{**}$}\label{3a_figure8}
\end{figure}
Notice that $h(x,0)=\frac{5}{2}-\frac{x^2}{16}>d$. It follows from Fig. \ref{3a_figure8} that  the dependence of $q$ on stationary solution is obtained if it exists and we get the threshold value $q^{**}\in(0.559,0.562)$ for the existence of stationary solution.
This verifies the sharp criteria for persistence or extinction(see Theorem \ref{3theorem11}).

{\bf{Example 2.}} Choosing $q=0,0.1,0.2,0.3$, by Theorem \ref{3theorem11}, the species will survive and  has a steady-state.  Figs. \ref{3a_figure3} and \ref{3a_figure4} describe the profiles of $u$ with different $q$ at time $t=1,4$ and the profiles of $u$ with different $q$ at location $x=1,4$. We find that when the advection rate has a tendency to $0$, the species density will close to the species density for $q=0$. 
Meanwhile,  Fig. \ref{3a_figure5} shows that the stationary solution will close to the stationary solution for $q=0$ when the advection rate $q$ tends to $0$. These phenomena explain our results for Theorems \ref{3a_3atheorem1} and \ref{3a_3atheorem2} with $q\to0^+$.
\begin{figure}[h]
  \centering
  \includegraphics[height=60mm,width=180mm]{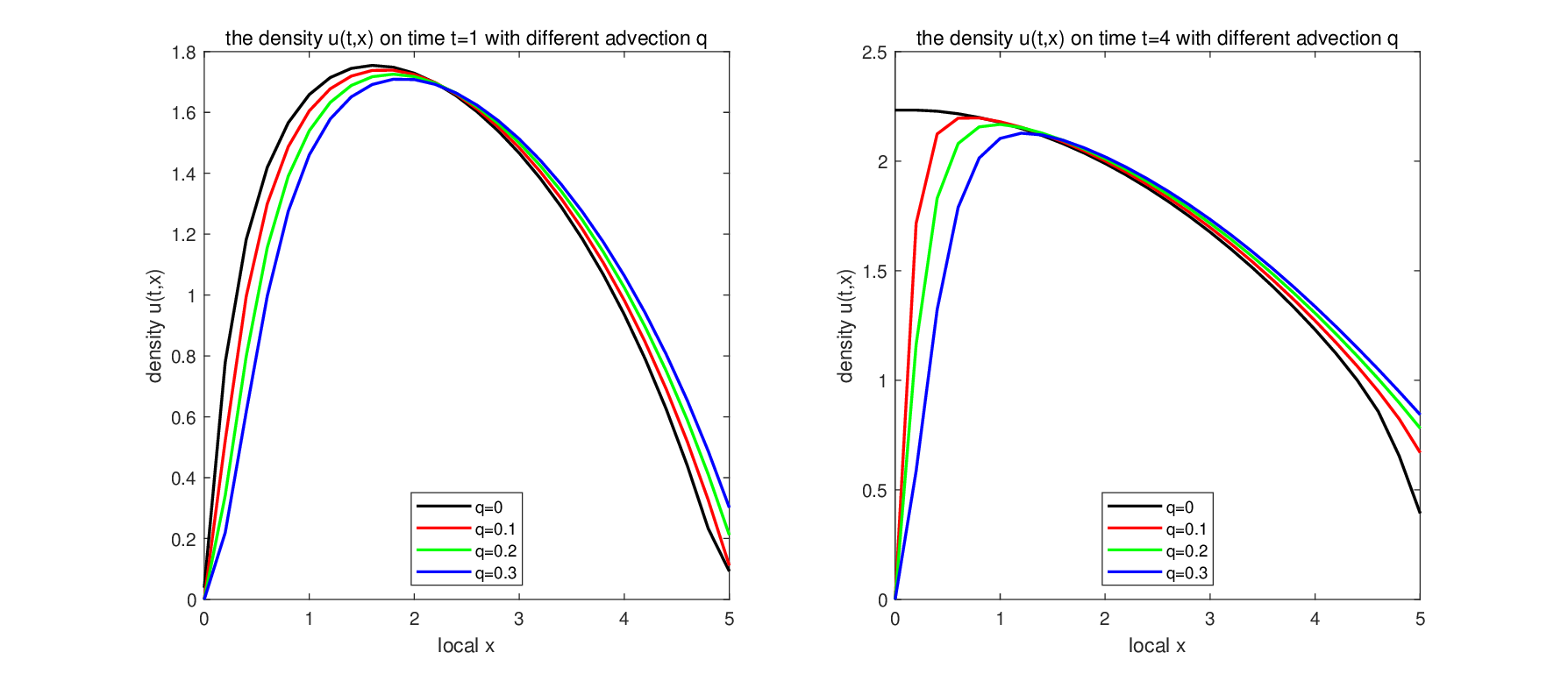}\\
  \caption{The species density of $u$  with different $q$ at time $t=1,4$ }\label{3a_figure3}
\end{figure}
\begin{figure}[h]
  \centering
  \includegraphics[height=60mm,width=180mm]{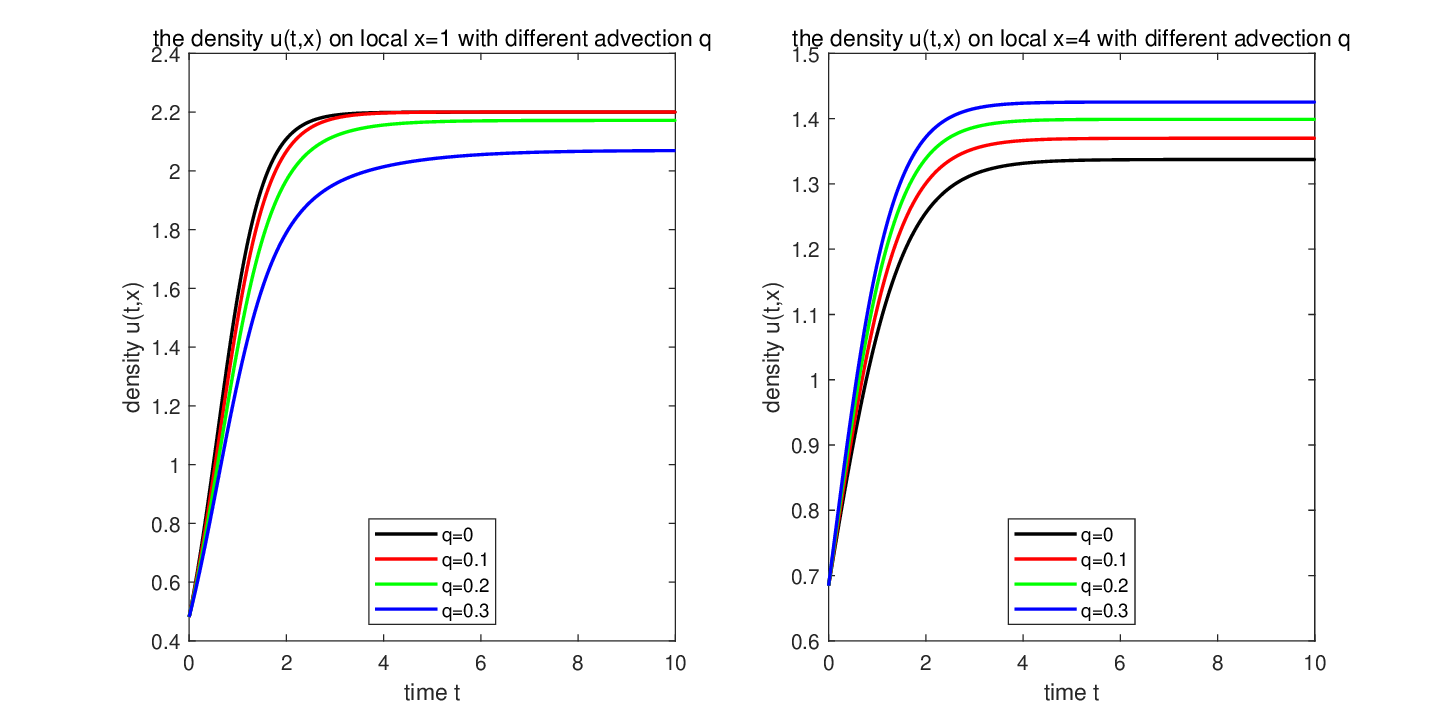}\\
  \caption{The species density of $u$  with different $q$ at location $x=1,4$}\label{3a_figure4}
\end{figure}
\begin{figure}[h]
  \centering
  \includegraphics[height=60mm,width=80mm]{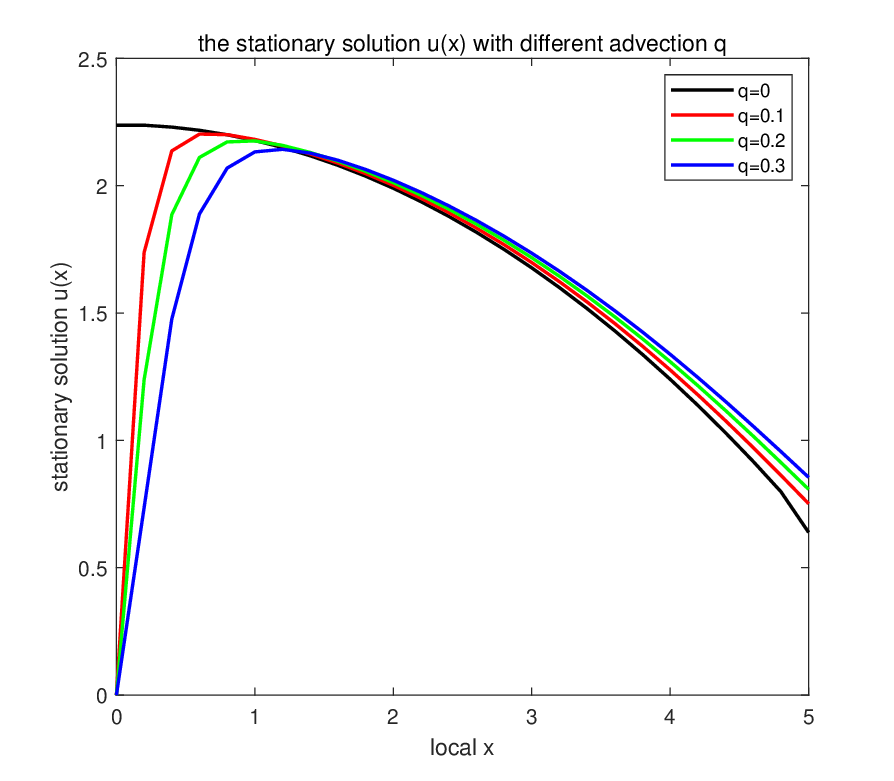}\\
  \caption{The stationary solution $u(x)$  with different $q$}\label{3a_figure5}
\end{figure}

{\bf{Example 3.}} By choosing $q=2,4,6,8$, we get the profiles of $u$ with different $q$ at time $t=1,1.2$ and the profiles of $u$ with different $q$ at location $x=1,4$; see Figs. \ref{3a_figure6} and \ref{3a_figure7}. We find that when the advection rate $q$ becomes large, the species density will have a tendency to $0$ for any time and location, which explain the result of Theorem \ref{3a_3atheorem2} with $q\to\infty$.
\begin{figure}[h]
  \centering
  \includegraphics[height=60mm,width=180mm]{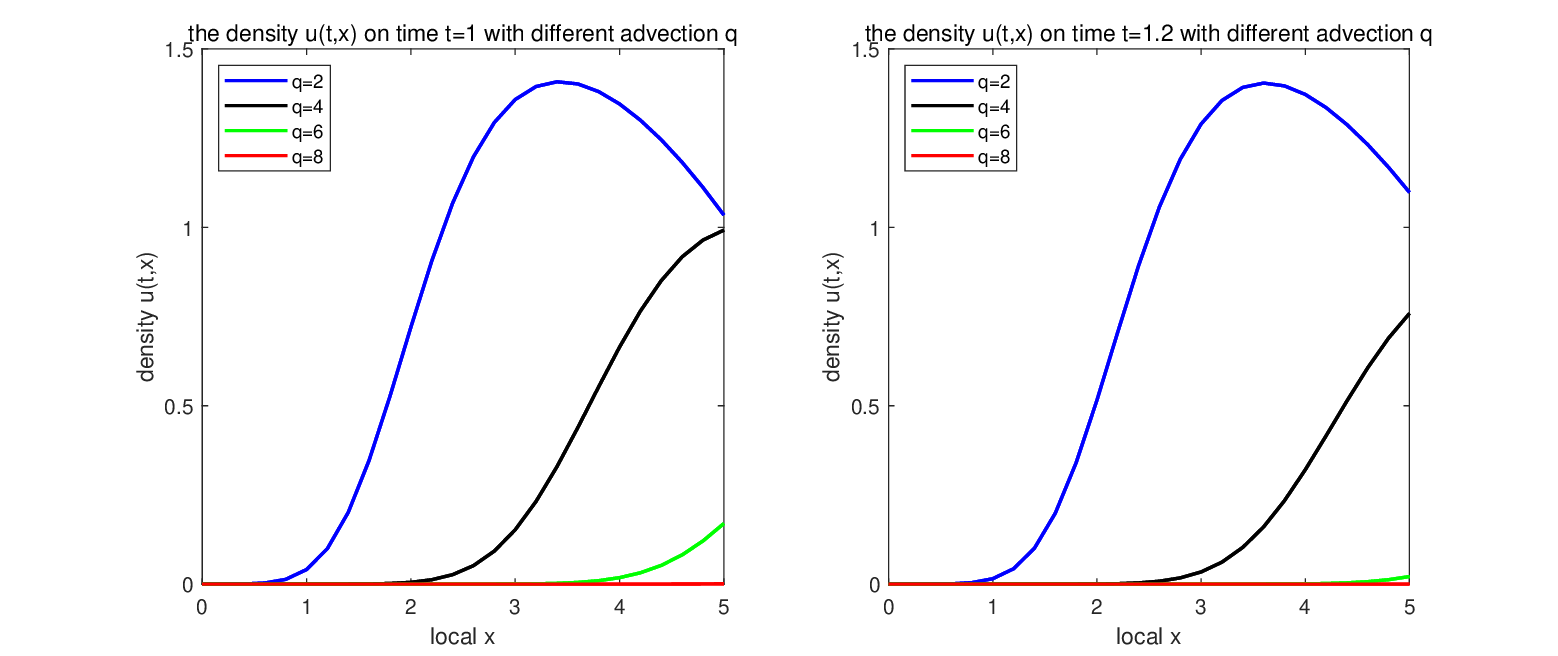}\\
  \caption{The species density of $u$  with different $q$ at time $t=1,1.2$ }\label{3a_figure6}
\end{figure}
\begin{figure}[h]
  \centering
  \includegraphics[height=60mm,width=180mm]{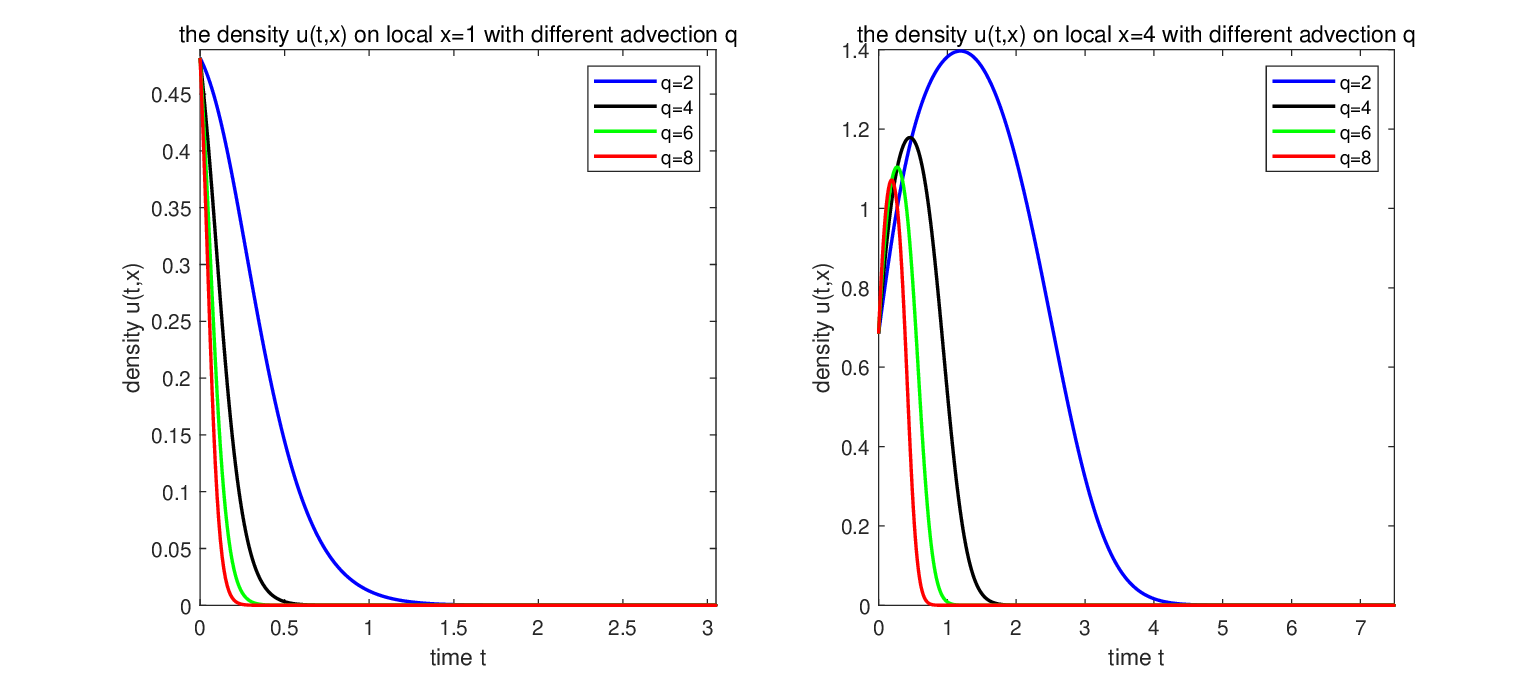}\\
  \caption{The species density of $u$  with different $q$ at location $x=1,4$}\label{3a_figure7}
\end{figure}

\section{Discussion}

The paper introduces a nonlocal reaction-diffusion-advection problem  in a bounded region with  Dirichlet/Neumann boundary condition. Our objective is to investigate its spatial dynamics, namely, the sharp criteria for persistence or extinction and the limiting behaviors of solutions with respect to advection rate. The main conclusions can be summarized as follows:
\begin{enumerate}[(1)]
\item  The existence/nonexistence, uniqueness and stability of nontrivial stationary solutions are obtained. To be specific:
    \begin{enumerate}[(i)]
     \item For Neumann problem \eqref{3a_11}  or  Dirichlet problem  \eqref{3a_1}, if it admits a  bounded nontrivial stationary solution $U(x)$, then $U(x)$ is unique and asymptotically stable.
     \item  \eqref{3a_11} exists a nontrivial stationary solution if and only if $\lambda_p(\mathfrak{L}_{\Omega,J,q}+h(x,0)-d\int_\Omega J(x-y)dy)<0$. Similarly,  \eqref{3a_1} exists a nontrivial stationary solution if and only if $\lambda_p(\mathfrak{L}_{\Omega,J,q}+h(x,0))+d<0$.
      \end{enumerate}

 \item The sharp criteria with respect to advection rate for persistence-extinction are obtained. It can be explained  in biological reality. Namely, we consider fresh water organisms that  subject to
 directional motion in response to water flow with a speed of $q$. There exists a critical threshold value $\tilde{q}$, such that the species can survive only when the velocity of water flow is less than $\tilde{q}$. If the water flow velocity exceeds $\tilde{q}$, the species will become extinct.



\item  The limiting behaviors of solutions with respect to advection rate are considered. It tells us that  a sufficiently large directional motion will cause the species extinction in any situations.
%
\item   The numerical simulations verify our theoretical proof and show that the advection rate has a great impact on the dynamic behaviors of species.

 \end{enumerate}

In addition, when $f(t,x,u)=h(x,u)u$,  we have a remark with sharp criteria for persistence or extinction.  For \eqref{3a_11} and \eqref{3a_1}, notice that the prerequisite of the sharp criteria with respect to $q$  to be true is respectively $h(x,0)\geq d\int_\Omega J(x-y)dy$ and $\inf_{\bar{\Omega}}h(x,0)>d$. So, What should we think about the case for $0\leq h(x,0)< d\int_\Omega J(x-y)dy$ or $0\leq \inf_{\bar{\Omega}}h(x,0)\leq d$? Furthermore, if $h(x,0)$ is sign changed, the sharp criteria with respect to $q$ whether can be established for \eqref{3a_11} and \eqref{3a_1}. It is an open problem and excepts be solved in future. The key to solving this problem is that the monotonicity of the principal eigenvalue of operator $\mathfrak{L}_{\Omega,J,q}+a$  with respect to $q$ for any given $a(x)\in C(\bar{\Omega})\cap L^\infty(\Omega)$. Here, $a(x)$ may be negative.

\section*{Statements and Declarations}
\noindent
{\bf{Competing Interests}} \  We declare that the authors have no competing interests as defined by Springer, or other interests that might be perceived to influence the results and/or discussion reported in this paper.

\section*{Acknowledgement}

 This work is supported in part by the National Natural Science Foundation of China (No. 11871475,\\12271525) and the Fundamental Research Funds for the Central Universities of Central South University (No. CX20230218).
%
%
%
%
%
%
%
%
%
%
%
%

{}

\end{document}